\documentclass[11pt]{amsart}

\usepackage{amsmath, amsthm, amssymb}
\input xy
\xyoption{all}
\usepackage{mathrsfs}
\usepackage{enumerate}

\newtheorem{theorem}[subsubsection]{Theorem}
\newtheorem{proposition}[subsubsection]{Proposition}
\newtheorem{lemma}[subsubsection]{Lemma}
\newtheorem{corollary}[subsubsection]{Corollary}

\theoremstyle{definition}
\newtheorem{definition}[subsubsection]{Definition}

\newtheorem{remark}[subsubsection]{Remark}


\newcommand{\Z}{\ensuremath{\mathbb{Z}}}
\newcommand{\Q}{\ensuremath{\mathbb{Q}}}
\newcommand{\C}{\ensuremath{\mathbb{C}}}

\renewcommand{\P}{\ensuremath{\mathbb{P}}}

\newcommand{\id}{\ensuremath{\textrm{id}}}
\newcommand{\M}{\ensuremath{\overline{\mathcal{M}}}}
\renewcommand{\O}{\ensuremath{\mathcal{O}}}

\newcommand{\ev}{\ensuremath{\textrm{ev}}}
\newcommand{\vir}{\ensuremath{\textrm{vir}}}
\renewcommand{\t}{\ensuremath{\mathbf{t}}}
\newcommand{\psibar}{\ensuremath{\overline{\psi}}}
\newcommand{\ch}{\ensuremath{\textrm{ch}}}
\newcommand{\Mt}{\ensuremath{\widetilde{\mathcal{M}}}}
\newcommand{\sfrac}[2]{\ensuremath{\textstyle\frac{#1}{#2}}}
\newcommand{\lfrac}[2]{\ensuremath{\displaystyle\frac{#1}{#2}}}
\renewcommand{\d}{\ensuremath{\partial}}
\newcommand{\Td}{\ensuremath{\textrm{Td}}}
\newcommand{\D}{\ensuremath{\mathcal{D}}}
\newcommand{\PP}{\ensuremath{\mathcal{P}}}

\begin{document}

\title[LG/CY correspondence for $X_{3,3}$ and $X_{2,2,2,2}$]{Landau-Ginzburg/Calabi-Yau correspondence for the complete intersections $X_{3,3}$ and $X_{2,2,2,2}$}
\author{Emily Clader}
\date{11 April 2013.}
\thanks{Partially supported by NSF RTG grant DMS-0602191}

\begin{abstract}
We define a generalization of Fan-Jarvis-Ruan-Witten theory, a ``hybrid" model associated to a collection of quasihomogeneous polynomials of the same weights and degree, which is expected to match the Gromov-Witten theory of the Calabi-Yau complete intersection cut out by the polynomials.  In genus zero, we prove that the correspondence holds for any such complete intersection of dimension three in ordinary, rather than weighted, projective space.  These results generalize those of Chiodo-Ruan for the quintic threefold, and as in that setting, Givental's quantization can be used to yield a conjectural relation between the full higher-genus theories.
\end{abstract}

\maketitle

\tableofcontents

\section{Introduction}

In the early 1990s, when the mathematical study of mirror symmetry was just beginning, physicists posited the existence of a Landau-Ginzburg/Calabi-Yau (LG/CY) correspondence connecting the geometry of Calabi-Yau complete intersections in projective space to the Landau-Ginzburg model, in which the polynomials defining the complete intersections are studied as singularities instead \cite{VW89} \cite{Wi93b}.  Mathematically, the theory on the CY side is the Gromov-Witten theory of the complete intersection, but it was not until 2007 with the series of papers \cite{FJR3}, \cite{FJR}, \cite{FJR2} that a candidate theory on the LG side was suggested, namely Fan-Jarvis-Ruan-Witten (FJRW) theory.  In \cite{CR}, the Gromov-Witten theory of the quintic threefold was shown to match the FJRW theory of the corresponding singularity in genus zero.

The goal of this paper is to extend the results of \cite{CR} to certain complete intersections in projective space.  In order to accomplish this, it is necessary to generalize FJRW theory, constructing a mathematical Landau-Ginzburg model associated to a collection of singularities rather than just one.  The theory we construct is a ``hybrid" model that combines aspects of FJRW theory and Gromov-Witten theory.

The idea for the hybrid model, as well as the technical tools required for its development, were already known by a number of authors, and this paper owes a great debt to them.  The initial definition of the hybrid moduli space was suggested by A. Chiodo, who explained it to the author and proposed the project on which this work is based.  To define a virtual cycle for the theory, we use the method of cosection localization, which is due to Kiem-Li-Chang \cite{CL} \cite{Li}.  Our application of the cosection method closely follows the work of Chang-Li \cite{CL}; in fact, for the case of the quintic threefold, the construction considered in this paper is nothing but the Landau-Ginzburg analogue of their argument.  The first application of cosection localization to the Landau-Ginzburg side is due to Chang-Li-Li, who use it in the recent paper \cite{CLL} to give an algebraic construction of FJRW theory in the case of narrow sectors.  This paper can be considered a generalization of their results.

Via Givental's quantization machinery, the genus-zero LG/CY correspondence yields a conjectural relationship between the hybrid model and the Gromov-Witten theory of the complete intersection in higher genus.  While computations of Gromov-Witten theory past genus $1$ are currently beyond the scope of mathematicians' methods, the Landau-Ginzburg model is generally thought to be more computationally manageable \cite{CR}.  Thus, if the higher-genus correspondence could be verified, it would potentially open exciting avenues for Gromov-Witten theory.

\subsection{Main result}

Given a nondegenerate collection of quasihomogeneous polynomials $W_1, \ldots, W_r \in \C[x_1, \ldots, x_N]$, each with weights $c_1, \ldots, c_N$ and degree $d$ satisfying the Calabi-Yau condition
\begin{equation}
\label{CY}
dr = \sum_{j=1}^N c_j,
\end{equation}
there are two associated theories.  On the Calabi-Yau side, one considers the complete intersection $X$ in weighted projective space cut out by the polynomials.  The cohomology of this complete intersection can be viewed as the state space from which insertions to Gromov-Witten invariants of $X$ are chosen; for any choice of $\varphi_1, \ldots, \varphi_n \in H_{GW} = H^*(X)$ and any $a_1, \ldots, a_n \in \Z^{\geq 0}$, there is a corresponding Gromov-Witten invariant
\[\langle \tau_{a_1}(\varphi_1), \ldots, \tau_{a_n}(\varphi_n)\rangle^{GW}_{g,n,\beta},\]
defined as an intersection number on the moduli space of stable maps to $X$.  The genus-zero invariants are encoded by a $J$-function
\[J_{GW}(\t, z) = z + \t + \sum_{n,\beta} \frac{1}{n!}\left\langle \t(\psi), \ldots, \t(\psi), \frac{\varphi_{\alpha}}{z - \psi}\right\rangle^{GW}_{0,n+1, \beta} \varphi^{\alpha},\]
where $\t(z) = t_0 + t_1z + t_2z^2 + \cdots \in H_{GW}[z]$ and $\varphi_{\alpha}$ runs over a basis for $H_{GW}$.  On the Landau-Ginzburg side, the polynomials $W_i$ are regarded as the equations for singularities in $\C^N$.  There is also a state space $H_{hyb}$, and its elements can be used as the insertions to hybrid invariants
\[\langle \tau_{a_1}(\phi_1), \ldots, \tau_{a_n}(\phi_n)\rangle^{hyb}_{g,n,\beta},\]
which are intersection numbers on a moduli space parameterizing stable maps to projective space together with a collection of line bundles on the source curve whose tensor powers satisfy equations determined by the polynomials $W_i$.  These, too, are encoded by a $J$-function in genus zero:
\[J_{hyb}(\t, z) = z + \t + \sum_{n,\beta} \frac{1}{n!}\left\langle \t(\overline{\psi}), \ldots, \t(\overline{\psi}), \frac{\phi_{\alpha}}{z - \overline{\psi}}\right\rangle^{hyb}_{0,n+1, \beta} \phi^{\alpha},\]
where $\t(z) = t_0 + t_1z + t_2z^2 + \cdots \in H_{hyb}[z]$ and $\phi_{\alpha}$ runs over a basis for $H_{hyb}$.  On either side, there is a grading on the state space, and the small $J$-function is defined by restricting to the degree-two component.

The genus-zero Landau-Ginzburg/Calabi-Yau (LG/CY) correspondence is the assertion that there is a degree-preserving isomorphism between the two state spaces and that, after certain identifications, the small $J$-functions coincide.  In this paper, we prove that the correspondence holds whenever the polynomials cut out a threefold complete intersection in ordinary, rather than weighted, projective space.  This leaves only three possibilities for the complete intersection: the quintic hypersurface $X_5 \subset \P^4$, the intersection of two cubic hypersurfaces $X_{3,3} \subset \P^5$, and the intersection of four quadrics $X_{2,2,2,2} \subset \P^7$.  The first of these is the content of \cite{CR}, so we focus on the second two.

After verifying the state space isomorphism in these special cases (Proposition \ref{sscorrespondence}), the strategy for proving that the small $J$-functions of Gromov-Witten theory and the hybrid model match is to relate each to an $I$-function.  On the Calabi-Yau side, the definition of $I_{GW}$ and its relationship to $J_{GW}$ were shown in \cite{Givental}.  The $I$-function can be defined explicity as a hypergeometric series in the variable $q = \exp(t_0^1)$, where $t_0 = \sum t_0^{\alpha} \varphi_{\alpha}$ and $\varphi_1 \in H^2(X)$.  Expanded in the variable $H \in H_{GW}$ corresponding to the hyperplane class, $I_{GW}$ assembles the solutions to a Picard-Fuchs equation.  In our two cases of interest, the Picard-Fuchs equation are
\[\left[ D_q^4 - 3^6q\left(D_q+\frac{1}{3}\right)^2\left(D_q+\frac{2}{3}\right)^2 \right] I_{GW} = 0\]
and
\[\left[D_q^4 - 2^8q \left(D_q+\frac{1}{2}\right)^4 \right] I_{GW}= 0,\]
for the cubic and quadric complete intersections, respectively, where $D_q = q \frac{\d}{\d q}$.  There is a ``mirror map"-- that is, an explicit change of variables
\[q' = \frac{g_{GW}(q)}{f_{GW}(q)}\]
for $\C$-valued functions $g_{GW}$ and $f_{GW}$-- under which the small $J$-function $J_{GW}$ matches $I_{GW}$:
\[\frac{I_{GW}(q,z)}{f_{GW}(q)} = J_{GW}(q',z).\]

We provide an analogous story on the Landau-Ginzburg side for each of the examples mentioned above.  Using the machinery of twisted invariants developed in \cite{Coates}, we construct a hybrid $I$-function in each case.  These are:

\begin{equation}
\label{I1}
I_{hyb}(t, z) = \sum_{\substack{d \geq 0\\ d \not \equiv -1 \mod 3}} \frac{ze^{(d +1+ \frac{H^{(d+1)}}{z})t}}{3^{6\lfloor \frac{d}{3}\rfloor}} \; \frac{ \displaystyle\prod_{\substack{1 \leq b \leq d\\ b \equiv d+1 \mod 3}} (H^{(d+1)}+bz)^{4}}{ \displaystyle\prod_{\substack{1 \leq b \leq d\\ b \not \equiv d+1 \mod 3}} (H^{(d+1)}+bz)^{2}}
\end{equation}
for the cubic and
\begin{equation}
\label{I2}
I_{hyb}(t, z) = \sum_{\substack{d \geq 0\\ d \not \equiv -1 \mod 2}} \frac{ze^{(d +1+ \frac{H^{(d+1)}}{z})t}}{2^{8\lfloor \frac{d}{2}\rfloor}} \; \frac{ \displaystyle\prod_{\substack{1 \leq b \leq d\\ b \equiv d+1 \mod 2}} (H^{(d+1)}+bz)^{4}}{ \displaystyle\prod_{\substack{1 \leq b \leq d\\ b \not \equiv d+1 \mod 2}} (H^{(d+1)}+bz)^{4}}
\end{equation}
for the quadric, where $t = t + 0z + 0z^2 + \cdots$ lies in the degree-$2$ part of the Landau-Ginzburg state space.  The key fact about these $I$-functions is that the family $I_{hyb}(t, -z)$ lies on the Lagrangian cone $\mathcal{L}_{hyb}$ on which the $J$-function is a slice, as we prove in Theorem \ref{Ifunction}.  This cone has a special geometric property that allows any function lying on it to be determined from only the first two coefficients in its expansion in powers of $z$.  Using the expressions (\ref{I1}) or (\ref{I2}), one can write
\[I_{hyb}(t, z) = \omega_1^{hyb}(t) \cdot 1^{(1)} \cdot z + \omega_2^{hyb}(t)  + O(z^{-1})\]
for explicit $\C$-valued functions $\omega_1^{hyb}(t)$ and $\omega_2^{hyb}(t)$.  We therefore obtain the following theorem:

\begin{theorem}
\label{LGCY}
Consider the hybrid model $I$-function (\ref{I1}) associated to a generic collection of two homogeneous cubic polynomials in six variables, whose coefficients when expanded in powers of $H^{(i)}$ span the solution space of the Picard-Fuchs equation 
\[\left[ D_{\psi}^4 - 3^6\psi^{-1}\left(D_{\psi}-\frac{1}{3}\right)^2\left(D_{\psi}-\frac{2}{3}\right)^2 \right] I_{hyb} = 0\]
for $D_{\psi} = \psi \frac{\d}{\d \psi}$ and $\psi = e^{3t}$.  This $I$-function and the hybrid $J$-function $J_{hyb}$ associated to the same collection of polynomials are related by an explicit change of variables (mirror map)
\[\frac{I_{hyb}(t,-z)}{\omega_1^{hyb}(t)} = J_{hyb}(t', -z), \;\;\; \text{where} \;\; t' = \frac{\omega_2^{hyb}(t)}{\omega_1^{hyb}(t)}.\]

The analogous statement holds for the hybrid model $I$-function (\ref{I2}) associated to a generic collection of four homogeneous quadric polynomials in eight variables, for which the coefficients span the solution space of the Picard-Fuchs equation
\[\left[D_{\psi}^4 - 2^8\psi^{-1} \left(D_{\psi}-\frac{1}{2}\right)^4 \right] I_{hyb}= 0\]
with $D_{\psi}  = \psi \frac{\d}{\d \psi}$ and $\psi = e^{2t}$.
\end{theorem}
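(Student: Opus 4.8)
The plan is to prove the two assertions essentially separately. The mirror-map identity is the conceptual statement and follows formally from Theorem~\ref{Ifunction} together with the ruling property of Givental's Lagrangian cone; the Picard--Fuchs equation and the spanning of its solution space are a direct termwise computation on the explicit series (\ref{I1}). I carry out the cubic; the quadric is word-for-word the same after replacing the degree $3$ by $2$, $3^6$ by $2^8$, the pair of local exponents $\tfrac13,\tfrac23$ by $\tfrac12$ with multiplicity four, the numerator/denominator powers $4,2$ by $4,4$, and (\ref{I1}) by (\ref{I2}).

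For the mirror map I first recall Givental's description of $\mathcal{L}_{hyb}$: it is a cone in the symplectic space $\mathcal{H}=H_{hyb}\otimes\C((z^{-1}))$, ruled by the subspaces $zT_f$ for $f\in\mathcal{L}_{hyb}$, and its intersection with the affine slice $-z\cdot 1^{(1)}+\big(H_{hyb}^{(2)}\oplus z^{-1}H_{hyb}[[z^{-1}]]\big)$ is exactly the set of values $J_{hyb}(\tau,-z)$ for $\tau$ in the degree-two part $H_{hyb}^{(2)}$, which is one-dimensional by Proposition~\ref{sscorrespondence}. The ``first two coefficients'' property is precisely that such a point of $\mathcal{L}_{hyb}$ is determined by, and recovers, the class $\tau$ occupying its $z^{0}$-slot. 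By Theorem~\ref{Ifunction}, $I_{hyb}(t,-z)\in\mathcal{L}_{hyb}$; since $\mathcal{L}_{hyb}$ is a cone and $\omega_1^{hyb}(t)$ is an invertible scalar, $I_{hyb}(t,-z)/\omega_1^{hyb}(t)\in\mathcal{L}_{hyb}$ as well. By the expansion recorded before the theorem, this quotient equals $-z\cdot 1^{(1)}+\omega_2^{hyb}(t)/\omega_1^{hyb}(t)+O(z^{-1})$, and its $z^{0}$-term lies in $H_{hyb}^{(2)}$ (forced by homogeneity of the $I$-function with $z$ in degree two). Hence the quotient lies in the slice above, and the characterization identifies it with $J_{hyb}(t',-z)$, $t'=\omega_2^{hyb}(t)/\omega_1^{hyb}(t)$.

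For the Picard--Fuchs equation I set $\psi=e^{3t}$, so that $D_\psi=\tfrac13\partial_t$, and write (\ref{I1}) as $\sum_{d}I_d$ with $I_d$ the $d$-th summand, of the shape $z\,\psi^{(d+1)/3}\cdot 3^{-6\lfloor d/3\rfloor}\cdot R_d(H^{(d+1)},z)\cdot\psi^{H^{(d+1)}/(3z)}$, where $R_d$ is the ratio of powers of the linear forms $H^{(d+1)}+bz$. Then $D_\psi$ acts on $I_d$ as multiplication by $\tfrac13\big(d+1+H^{(d+1)}/z\big)$, while for the admissible $d\ge 3$ one has $\psi^{-1}I_d=3^{-6}(H^{(d+1)}+(d-2)z)^4/\big((H^{(d+1)}+(d-1)z)^2(H^{(d+1)}+dz)^2\big)\cdot I_{d-3}$ --- this is the content of the floor function and the congruence conditions in (\ref{I1}), since advancing $d$ by $3$ inserts the linear form $H^{(d+1)}+(d-2)z$ (with power $4$) into the numerator, inserts $H^{(d+1)}+(d-1)z$ and $H^{(d+1)}+dz$ (with power $2$) into the denominator, and multiplies the power of $3$ by $3^{-6}$. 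Pulling out powers of $z$ then gives the clean identity $3^6\psi^{-1}(D_\psi-\tfrac13)^2(D_\psi-\tfrac23)^2 I_d=D_\psi^4 I_{d-3}$, so that --- since $d\mapsto d-3$ is a bijection of the index set $\{d\ge 0:\ d\not\equiv -1\bmod 3\}$ onto itself once the seeds $d=0,1$ are removed --- the displayed operator annihilates $I_{hyb}$ up to the seed contribution $-3^6\psi^{-1}(D_\psi-\tfrac13)^2(D_\psi-\tfrac23)^2(I_0+I_1)$, which vanishes because $H^{(1)}$ and $H^{(2)}$ square to zero in $H_{hyb}$, a feature of the state-space identification in Proposition~\ref{sscorrespondence}. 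Finally, expanding $\psi^{H^{(d+1)}/(3z)}$ exhibits the four coefficients of $I_{hyb}$ in a basis of $H_{hyb}$ as functions with leading terms $\psi^{1/3}$, $\psi^{1/3}\log\psi$, $\psi^{1/3}(\log\psi)^2$, $\psi^{1/3}(\log\psi)^3$; being four linearly independent solutions of a fourth-order equation, they span its solution space.

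The genuine difficulty is concentrated in Theorem~\ref{Ifunction} --- that $I_{hyb}(t,-z)$ lies on $\mathcal{L}_{hyb}$ --- which is established earlier; granted that, the present argument has no hard step. The one place needing attention is the termwise bookkeeping of (\ref{I1})--(\ref{I2}) in the Picard--Fuchs computation: one must carefully track which linear forms migrate into the numerator versus the denominator of $R_d$ as $d$ advances by the polynomial degree, and dispose of the seed terms --- routine, but easy to get wrong.
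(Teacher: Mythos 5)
Your argument follows the paper's route: the mirror-map identity is Theorem~\ref{Ifunction} together with the ruling property of the Lagrangian cone, and the Picard--Fuchs equation is a termwise recursion with a vanishing seed. Your step-three recursion on the original index $d$ and the paper's step-one recursion on a reindexed sum (after splitting by the residue of $d$ mod $3$) are the same computation in different packaging, and your recursion identity and the identification of the seed as $-3^6\psi^{-1}(D_\psi-\tfrac13)^2(D_\psi-\tfrac23)^2(I_0+I_1)$, which dies because $(H^{(1)})^2=(H^{(2)})^2=0$, are correct.

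The one misstep is the spanning claim for the cubic. Since $(H^{(i)})^2=0$ in each narrow sector $H^*(\P^1)$, the factor $\psi^{H^{(i)}/(3z)}$ expands only to first order in $\log\psi$; there are no $(\log\psi)^2$ or $(\log\psi)^3$ terms. Moreover the two congruence classes $d\equiv 0$ and $d\equiv 1$ mod $3$ contribute different leading exponents $e^{t}=\psi^{1/3}$ and $e^{2t}=\psi^{2/3}$. So the four components of $I_{hyb}$ in the basis $1^{(1)},H^{(1)},1^{(2)},H^{(2)}$ have leading behaviors $\psi^{1/3}$, $\psi^{1/3}\log\psi$, $\psi^{2/3}$, $\psi^{2/3}\log\psi$ near $\psi=0$, matching the indicial roots $\tfrac13,\tfrac13,\tfrac23,\tfrac23$ of the Picard--Fuchs operator there --- not $\psi^{1/3}(\log\psi)^j$ for $j=0,\dots,3$. (The single-exponent, full-log-power pattern you wrote is correct for the quadric, which has one narrow sector $H^*(\P^3)$ with $(H^{(1)})^4=0$ and a fourfold indicial root $\tfrac12$.) The spanning conclusion still follows immediately from the corrected leading terms, so this is a local slip rather than a structural gap; the paper itself treats the spanning as an ``easy consequence'' of the explicit formula and does not spell it out.
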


The fact that the hybrid $I$-functions assemble the solutions to the specified Picard-Fuchs equations is an easy consequence of the explicit expressions for these functions.  These equations are the same as the Picard-Fuchs equations for the corresponding Calabi-Yau complete intersections after setting $q=\psi^{-1}$.  It follows that, if we use the state space correspondence to identify the state spaces $H_{GW}$ and $H_{hyb}$ in which the $I$-functions take values, then $I_{hyb}$ and the analytic continuation of $I_{GW}$ to the $\psi$-coordinate patch are both comprised of bases of solutions to the same differential equation, and hence are related by a symplectic isomorphism performing the change of basis.  This establishes the following corollary.

\begin{corollary}
\label{LGCYcor}
In either of the two cases outlined above, there is a $\C[z,z^{-1}]$-valued degree-preserving linear transformation mapping $I_{hyb}$ to the analytic continuation of $I_{GW}$ near $t=0$.  That is, the genus-zero Landau-Ginzburg/Calabi-Yau correspondence holds in these cases.
\end{corollary}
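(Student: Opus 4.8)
The plan is to prove the corollary by recognizing $I_{hyb}$ and $I_{GW}$ as two bases of solutions of a single fourth-order ordinary differential equation, related by an explicit change of variable, and then arguing that the change of basis between them is a degree-preserving, $\C[z,z^{-1}]$-linear transformation of the (identified) state space. First I would verify, directly from the closed expressions (\ref{I1}) and (\ref{I2}), that the coefficients of $I_{hyb}$ in its expansion in powers of the classes $H^{(i)}$ span the four-dimensional solution space of the Picard-Fuchs operator in the variable $\psi = e^{3t}$ (resp.\ $\psi = e^{2t}$) appearing in Theorem \ref{LGCY}; this is the routine computation underlying the hypothesis of that theorem, amounting to recognizing the series as the standard hypergeometric solution together with its logarithmic companions. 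On the Gromov-Witten side, by \cite{Givental} the coefficients of $I_{GW}$ in powers of $H$ span the solution space of the Picard-Fuchs operator in $q$.

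Next I would carry out the substitution $q = \psi^{-1}$. Since this yields $D_q = -D_\psi$, squaring kills the resulting sign and the operator $D_q^4 - 3^6 q(D_q + \tfrac{1}{3})^2(D_q+\tfrac{2}{3})^2$ transforms into $D_\psi^4 - 3^6\psi^{-1}(D_\psi - \tfrac{1}{3})^2(D_\psi - \tfrac{2}{3})^2$, which is exactly the operator appearing on the hybrid side, and likewise for the quadric with $2$ replacing $3$. Thus $I_{hyb}$ and the analytic continuation of $I_{GW}$ from a neighborhood of $q=0$ to a neighborhood of $\psi = 0$ are both (multivalued) sections of the same rank-four local system on $\P^1$ punctured at its singular points. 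Using the state space isomorphism of Proposition \ref{sscorrespondence} to identify $H_{GW}$ with $H_{hyb}$, the two $I$-functions become bases of one and the same four-dimensional solution space valued in the identified state space, so there is a unique invertible linear transformation carrying $I_{hyb}$ to the continued $I_{GW}$, namely the change-of-basis matrix. To see that this transformation is $\C[z,z^{-1}]$-valued and degree-preserving, I would track the $z$-grading, in which $z$ has cohomological degree two: the four local exponents of the Picard-Fuchs equation at the relevant singular point control, through the grading, the powers of $z$ attached to each graded piece of the state space, and since the two Picard-Fuchs equations coincide, these exponents — hence the associated $z$-degrees — agree on both sides, forcing the change of basis to preserve the grading and to involve only integer powers of $z$. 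Feeding this into Theorem \ref{LGCY} (which gives $I_{hyb}/\omega_1^{hyb} = J_{hyb}$ under the hybrid mirror map) and the Gromov-Witten mirror theorem ($I_{GW}/f_{GW} = J_{GW}$) then produces the matching of $J_{hyb}$ with $J_{GW}$ after analytic continuation, which is precisely the genus-zero LG/CY correspondence.

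I expect the main obstacle to be the last point: one must confirm that the identification of the two solution bases is simultaneously compatible with the state space isomorphism \emph{and} with the $z$-grading, so that the change of basis genuinely lies in $\mathrm{GL}$ of $H_{hyb}\otimes\C[z,z^{-1}]$ rather than merely in $\mathrm{GL}$ of the solution space over the field of multivalued functions. Concretely, this reduces to matching the four local exponents of the Picard-Fuchs equation at $\psi = 0$ with the four cohomological degrees in which $H_{hyb}$ is concentrated; once that compatibility is established — and it can also be arranged to be symplectic for the Givental pairing — the existence, uniqueness, and stated properties of the transformation, and hence the corollary, follow formally.
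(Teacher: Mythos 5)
Your proposal takes essentially the same route as the paper: verify that the hybrid $I$-function assembles solutions to the stated Picard--Fuchs operator, observe that this operator coincides with the Gromov--Witten one after the substitution $q = \psi^{-1}$, and conclude that $I_{hyb}$ and the analytic continuation of $I_{GW}$ are two bases of solutions to a common fourth-order ODE, related by the change-of-basis transformation. Your additional discussion of local exponents and the $z$-grading supplies a justification for the degree-preserving, $\C[z,z^{-1}]$-valued claim that the paper itself merely asserts in the paragraph preceding the corollary.
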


\subsection{Organization of the paper}

We begin, in Section \ref{background}, by establishing some background on quashimogeneous singularities and orbifold curves.  Section \ref{LGstatespace} is devoted to defining the state space for the Landau-Ginzburg model and proving in the two cases of interest that it is isomorphic to the state space on the Calabi-Yau side.  In Section \ref{quantumtheory}, the quantum theory of the Landau-Ginzburg model is developed for arbitrary complete intersections of the same weights and degree in weighted projective space.  At the end of that section, we specialize to the two examples of interest, and in Section \ref{correspondence}, we place those two examples in the context of Givental's quantization formalism, proving that the Lagrangian cone encoding the hybrid theory can be obtained from the Lagrangian cone encoding the genus-zero Gromov-Witten theory of projective space.  This leads to the definition of the $I$-function and the proof of the LG/CY correspondence for these two examples.

\subsection{Acknowledgments}

The author would like to thank A. Chiodo for initially suggesting the problem, explaining the basics of the hybrid model, and giving many invaluable suggestions on earlier drafts.  Y. Ruan also participated crucially in the development of the paper, by way of countless conversations, support, and advice.  The state space correspondence considered here is a very special case of an upcoming result of A. Chiodo and J. Nagel, whose argument inspired this one.  The idea of using Kiem-Li's cosection technique to define the virtual cycle in the Landau-Ginzburg model is due to H-L. Chang, J. Li, and W-P. Li, and was first brought to the attention of the author at a talk by J. Li at the 2011 Summer School on Moduli of Curves and Gromov-Witten Theory at the Institut Fourier.  Special thanks are due to J. Li for teaching the author the cosection technique, and to H-L. Chang for correspondence that clarified this topic further.

\section{Quashihomogeneous singularities and orbifold curves}
\label{background}

\subsection{Quasihomogeneous singularities}

The type of singularities for which the hybrid theory is defined are as follows.

\begin{definition}
\label{qhom}
A polynomial $W \in \C[x_1, \ldots, x_N]$ is {\it quasihomogeneous} if
\begin{enumerate}
\item there exist positive integers $c_1, \ldots, c_N$ (known as {\it weights}) and $d$ (the {\it degree}) such that
\[W(\lambda^{c_1}x_1, \ldots, \lambda^{c_N}x_N) = \lambda^d W(x_1, \ldots, x_N)\]
for all $\lambda \in \C$ and $(x_1, \ldots, x_N) \in \C^N$;
\item the {\it charges} $q_i := c_i/d$ are uniquely determined by $W$.
\end{enumerate}
\end{definition}

Let $W_1(x_1, \ldots, x_N), \ldots, W_r(x_1, \ldots, x_N)$ be a collection of quasihomogeneous polynomials in $N$ complex variables all having the same weights and degree.  Such a collection is {\it nondegenerate} if the only $\mathbf{x} \in \C^N$ for which all of the polynomials $W_i$ and all of their partial derivatives vanish is $\mathbf{x} = 0$.  It satisfies the {\it Calabi-Yau condition} if equation (\ref{CY}) holds.  All of the collections of quasihomogeneous polynomials considered in this paper will be assumed nondegenerate and Calabi-Yau.

Associated to such a collection is a group of symmetries.  In order to define this group, we will prefer to think of the $W_i$ as together defining a polynomial
\[\overline{W}(\mathbf{x}, \mathbf{p}) = p_1W_1(\mathbf{x}) + \cdots + p_r W_r(\mathbf{x}) \in \C[x_1, \ldots, x_N, p_1, \ldots, p_r].\]
From this perspective, symmetries of the collection of polynomials are simply symmetries of $\overline{W}$ in the sense of FJRW theory.  Explicitly:

\begin{definition}
The {\it group $G_{W_1, \ldots, W_r}$ of diagonal symmetries} of a collection of  quasihomogeneous polynomials of charges $c_1, \ldots, c_N$ and degree $d$ is
\[G_{W_1, \ldots, W_r} = \{(\alpha, \beta) \in (\C^*)^N \times (\C^*)^r \; | \hspace{4cm}\]
\[\hspace{3.5cm}\overline{W}(\alpha \mathbf{x}, \beta \mathbf{p}) = \overline{W}(\mathbf{x}, \mathbf{p}) \text{ for all } (\mathbf{x}, \mathbf{p}) \in \C^N \times \C^r\}.\]
\end{definition}

The group of diagonal symmetries always contains the subgroup
\[J = \{(t^{c_1}, \ldots, t^{c_N}, t^{-d}, \ldots, t^{-d}) \; | \; t \in \C^*\}.\]
This is the analogue of the group denoted $\langle J \rangle$ in FJRW theory.  There is an extra datum in the definition of FJRW theory that will not be present in the current paper: a subgroup $G$ of the group of diagonal symmetries containing $J$.  The theory developed here corresponds to the choice $G= J$.

\subsection{Orbifold curves and orbifold stable maps}

Similarly to FJRW theory, the hybrid model concerns curves equipped with a collection of line bundles whose tensor powers satisfy certain conditions.  Given that the moduli problem of roots of line bundles is better-behaved with respect to orbifold curves (see, for example, Section 1.2 of \cite{Chiodo}), the underlying curves of the theory should be allowed limited orbifold structure.

\begin{definition}
An {\it orbifold curve} (or ``balanced twisted curve" \cite{AV}) is a one-dimensional Deligne-Mumford stack with a finite ordered collection of marked points and at worst nodal singularities such that
\begin{enumerate}
\item the only points with nontrivial stabilizers are marked points and nodes;
\item all nodes are {\it balanced}; i.e., in the local picture $\{xy = 0\}$ at a node, the action of the isotropy group $\Z_k$ is given by
\[(x,y) \mapsto (\zeta_k x, \zeta_k^{-1}y)\]
with $\zeta_k$ a primitive $k$th root of unity.
\end{enumerate}
\end{definition}

Throughout the paper, we will denote the coarse underlying space of an orbifold curve $C$ by $|C|$.

\subsubsection{Multiplicities of orbifold line bundles}

Let $C$ be an orbifold curve and let $L$ an orbifold line bundle on $C$.  Choose a node $n$ of $C$ with isotropy group $\Z_{\ell}$ and a distinguished branch of $n$, so that the local picture can be expressed as $\{xy=0\}$ with $x$ being the coordinate on the distinguished branch.  Let $g$ be a generator of the isotropy group $\Z_{\ell}$ at the node acting on these local coordinates by $g\cdot (x,y) = (\zeta_{\ell}x, \zeta_{\ell}^{-1}y)$.

\begin{definition}
\label{multiplicity}
The {\it multiplicity} of $L$ at (the distinguished branch of) the node $n$ is the integer $m \in \{0, \ldots, \ell - 1\}$ such that, in local coordinates $(x, y, \lambda)$ on the total space of $L$, the action of $g$ is given by
\[g\cdot(x,y,\lambda) = (\zeta_{\ell}x, \zeta_{\ell}^{-1}y, \zeta_{\ell}^m \lambda).\]
In the same way, one can define the multiplicity of $L$ at a marked point by the action of a generator of the isotropy group on the fiber.
\end{definition}

One extremely important property of the multiplicity is that it allows one to determine the equation satisfied by the coarsening of $L$ on each of its components \cite{Chiodo} \cite{CR}.  Suppose that
\[\nu: L^{\otimes \ell} \rightarrow N\]
is an isomorphism between a power of $L$ and a line bundle $N$ pulled back from the coarse curve $|C|$ and $Z \subset C$ is a smooth (that is, non-nodal) irreducible component of $C$.  Let $m_1, \ldots, m_k$ be the multiplicities of $L$ at the nodes where $Z$ meets the rest of $C$, where in each case the distinguished branch is the one lying on $Z$.  Let $\epsilon: C \rightarrow |C|$ be the coarsening map.  If $|L| = \epsilon_* N$, then we have an isomorphism
\begin{equation}
\label{reification}
\epsilon_*\nu: |L|^{\otimes \ell} \rightarrow N \otimes \O_{|Z|}\left(-\sum_{i=1}^k m_i[p_i]\right),
\end{equation}
where $p_1, \ldots, p_k$ are the images in $|Z|$ of the points where $Z$ meets the rest of $C$.

Since $\epsilon$ is flat, $|L|$ is a line bundle; in particular, the fact that it has integral degree can often be used to find constraints on the multiplicities of $L$.  Conversely, the multiplicities at all of the marked points and nodes of $C$, together with the bundle $|L|$ on $|C|$, collectively determine $L$ as an orbifold line bundle.  See Lemma 2.2.5 of \cite{CR} for a precise statement to this effect.

\section{Landau-Ginzburg state space}
\label{LGstatespace}

Let $W_1, \ldots, W_r \in \C[x_1, \ldots, x_N]$ be a collection of quasihomogeneous polynomials of the same weights and degree.  In this section, we define the state space associated to such a collection and prove that in the cases of interest for this paper, there is a degree-preserving isomorphism between the Landau-Ginzburg state space and the state space for the Gromov-Witten theory of the corresponding complete intersection.

\subsection{State space}

The {\it state space} of the hybrid theory is the following vector space:
\begin{equation}
\label{ssdef}
\mathcal{H}_{hyb}(W_1, \ldots, W_r) = H^*_{CR}\left( \frac{\C^N \times (\C^r \setminus \{0\})}{J}, \overline{W}^{+ \infty}; \C \right),
\end{equation}
where $\overline{W}^{+ \infty} = (\text{Re} \overline{W})^{-1}( ( \rho, + \infty) )$ for $\rho \gg 0$ and $J$ acts by multiplication in each factor.

As a vector space, Chen-Ruan cohomology is the cohomology of the inertia stack, whose objects are pairs $((\mathbf{x}, \mathbf{p}), \gamma)$, where $\gamma \in J$, $(\mathbf{x}, \mathbf{p}) \in \C^N \times (\C^r \setminus \{0\})$, and $\gamma (\mathbf{x}, \mathbf{p}) = (\mathbf{x}, \mathbf{p})$.  The only elements of $J$ with nontrivial fixed-point sets are those of the form
\[(t^{c_1}, \ldots, t^{c_N}, 1, \ldots, 1),\]
where $t$ is a $d$th root of unity, so such elements index the components of the inertia stack.  These components are known as {\it twisted sectors}.

\subsubsection{Degree shifting}

As is usual in Chen-Ruan cohomology, we should shift the degree.

\begin{definition}
\label{degreeshift}
Let $\gamma = (e^{2\pi i \Theta_1^{\gamma}}, \ldots, e^{2\pi i \Theta_N^{\gamma}}, 1, \ldots, 1) \in J$ be an element with nontrivial fixed-point set, where $\Theta_i^{\gamma} \in \{0, \frac{1}{d}, \ldots, \frac{d-1}{d}\}$.  The {\it degree-shifting number} or {\it age shift} for $\gamma$ is
\[\iota(\gamma) = \sum_{j=1}^N (\Theta_j^{\gamma} - q_j),\]
where $q_j$ are the charges defined in Definition \ref{qhom}.
\end{definition}

Now, given $\alpha \in \mathcal{H}_{hyb}(W_1, \ldots, W_r)$ from the twisted sector indexed by $\gamma$, we set
\[\deg_{\overline{W}}(\alpha) = \deg(\alpha) + 2\iota(\gamma),\]
where $\deg(\alpha)$ denotes the ordinary degree of $\alpha$ as an element of the cohomology of the inertia stack.  This gives a grading on $\mathcal{H}_{hyb}(W_1, \ldots, W_r)$.

\subsubsection{Broad and narrow sectors}
\label{broadnarrow}

A twisted sector indexed by an element $(t^{c_1}, \ldots, t^{c_N}, 1, \ldots, 1) \in J$ will be called {\it narrow} if there is no $i$ with $t^{c_i} = 1$.  This condition ensures that the sector is supported on the suborbifold
\[\frac{\{0\} \times (\C^r \setminus \{0\})}{J} \subset \frac{\C^N \times (\C^r \setminus \{0\})}{J},\]
whose coarse underlying space is $\P^{r-1}$.  Since the above is disjoint from $\overline{W}^{+ \infty}$, the relative cohomology on these sectors is an absolute cohomology group, and indeed, each narrow sector is isomorphic to $H^*(\P^{r-1})$.  A sector that is not narrow will be called {\it broad}.

\subsubsection{Cases of interest}
\label{interest}

For most of the paper, we will restrict to the cases mentioned in the introduction, in which $W_1, \ldots, W_r$ define a threefold Calabi-Yau complete intersection in ordinary, rather than weighted, projective space.  This leaves the following three possibilities:
\begin{enumerate}
\item $r=1, d = 5, N = 5$ (quintic hypersurface in $\P^4$);
\item $r=2, d = 3, N=6$ (intersection of two cubics in $\P^5$);
\item $r=4, d = 2, N=8$ (intersection of four quadrics in $\P^7$).
\end{enumerate}
The first case was handled in \cite{CR}, while the second and third are considered here.

In case (2), the state space is
\[H^*_{CR}\left( \frac{\C^6 \times (\C^3\setminus \{0\})}{\C^*}, \overline{W}^{+ \infty}; \C\right),\]
where $\C^*$ acts via
\begin{equation}
\label{action}
\lambda(x_1, \ldots, x_6, p_1, p_2, p_3) = (\lambda, \ldots, \lambda, \lambda^{-3}, \lambda^{-3}, \lambda^{-3}).
\end{equation}
The orbifold in question, then, is the total space of the orbifold vector bundle $\O_{\P(3,3)}\left(-1\right)^{\oplus 6}$, where
\[\O_{\P(3,3)}\left(-1\right) =  \frac{(\C^3 \setminus \{0\}) \times \C}{\C^*}\]
with $\C^*$ acting with weights $(3,3,3, -1)$.\footnote{If one considers $\P(3,3)$ as arising via the root construction applied to $\P^1$ with its line bundle $\O(-1)$, this is the natural third root of the pullback of $\O(-1)$ (see Section 2.1.5 of \cite{Johnson}).}  The only broad sector is the nontwisted sector, while the twisted (narrow) sectors each contribute $H^*(\P^1)$.  Thus, the decomposition of the state space into sectors is:
\[H^*(\O_{\P^1}(-1)^{\oplus 6}, \overline{W}^{+ \infty}) \oplus H^*(\P^1) \oplus H^*(\P^1).\]
A similar analysis shows that the state space in case (3) is
\[H^*(\O_{\P^3}(-1)^{\oplus 8}, \overline{W}^{+ \infty}) \oplus H^*(\P^3).\]

\subsection{Cohomological LG/CY correspondence}

In the two new cases mentioned above, we verify that the state space isomorphism, or cohomological LG/CY correspondence, holds.  This is only a simple special case of a general state space isomorphism for Calabi-Yau complete intersections that will be proved in upcoming work of Chiodo and Nagel \cite{CN}, and which was discussed in a talk by J. Nagel at the Workshop on Recent Developments on Orbifolds at the Chern Institute of Mathematics in July 2011.

\begin{proposition}
\label{sscorrespondence}
Let $W_1(x_1, \ldots, x_6)$ and $W_2(x_1, \ldots, x_6)$ be homogeneous cubic polynomials defining a complete intersection $X_{3,3} \subset \P^6$.  Then the hybrid state space associated to these polynomials is isomorphic to the Gromov-Witten state space of $X_{3,3}$; that is,
\begin{equation}
\label{cubicss}
H^*(\O_{\P^1}(-1)^{\oplus 6}, \overline{W}^{+ \infty}) \oplus H^*(\P^1) \oplus H^*(\P^1) \cong H^*(X_{3,3}).
\end{equation}
Moreover, this isomorphism is degree-preserving under the degree shift (\ref{degreeshift}) for the left-hand side.

Similarly, there is a degree-preserving state space isomorphism for a collection of eight quadrics defining a complete intersection $X_{2,2,2,2} \subset \P^7$:
\[H^*(\O_{\P^3}(-1)^{\oplus 8}, \overline{W}^{+ \infty}) \oplus H^*(\P^3) \cong H^*(X_{2,2,2,2}).\]
\begin{proof}
The three summands on the left-hand side of (\ref{cubicss}) have degree shifts $-2, 0,$ and $2$, respectively.  Thus, the narrow sectors contribute one-dimensional summands in degrees $0$, $2$, $4$, and $6$.  By the Lefschetz hyperplane principle, this matches the primitive cohomology of $X_{3,3}$, so all that remains in the cubic case is to prove that
\[H^k(\O_{\P^1}(-1)^{\oplus 6}, \overline{W}^{+ \infty}) \cong \begin{cases} H^3(X_{3,3}) & k=7\\ 0 & \text{otherwise}.\end{cases}\]
Similarly, in the quadric case, the only statement that is not immediate is
\[H^k(\O_{\P^3}(-1)^{\oplus 8}, \overline{W}^{+ \infty}) \cong \begin{cases} H^3(X_{2,2,2,2}) & k=11\\0 & \text{otherwise}.\end{cases}\]
Both arguments are elementary, so we describe only the cubic case.

The basic idea is to relate both state spaces to the quotient
\begin{equation}
\label{quotient}
\frac{\C^6 \times \C^2}{\C^*},
\end{equation}
where $\C^*$ acts with weights $(1,\ldots, 1, -3,-3)$\footnote{In fact, this picture is extremely useful for understanding the motivation behind the LG/CY correspondence more generally.  Even the moduli spaces on each side can be viewed as arising from such a dichotomy; see Remark \ref{dichotomy}.}.  The subquotient where the $\C^6$ coordinate is nonzero is $\O_{\P^5}(-3)^{\oplus 2}$, and the Thom isomorphism implies that
\[H^3(X_{\overline{W}}) \cong H^7(\O_{\P^5}(-3)^{\oplus 2}, \overline{W}^{+ \infty}).\]
Letting $A$ denote the complement of the zero section in this bundle, the long exact sequence of the triple $(\O_{\P^5}(-3)^{\oplus 2}, A, \overline{W}^{+ \infty})$ implies further that
\[H^3(X_{\overline{W}}) \cong H^7(A, \overline{W}^{+ \infty}).\]
On the other hand, the subquotient of (\ref{quotient}) on which the $\C^2$ coordinate is nonzero is $\O_{\P(3,3)}(-1)^{\oplus 6}$, which also contains $A$ as the complement of the zero section.  The long exact sequence of the triple $(\O_{\P(3,3)}(-1)^{\oplus 6}, A, \overline{W}^{+ \infty})$ shows that
\[H^7(\O_{\P(3,3)}(-1)^{\oplus 6}, \overline{W}^{+ \infty}) \cong H^7(A, \overline{W}^{+ \infty}),\]
as required.
\end{proof}
\end{proposition}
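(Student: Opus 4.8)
The plan is to reduce both the Landau-Ginzburg state space and the degree-three cohomology of the Calabi-Yau complete intersection to a common auxiliary space, exactly as in the cubic case, and then to run the same pair of long exact sequences. First I would set up the ``master'' GIT quotient
\[
\frac{\C^8 \times \C^4}{\C^*},
\]
where $\C^*$ acts with weights $(1, \ldots, 1, -2, -2, -2, -2)$ on the eight $x$-coordinates and four $p$-coordinates respectively; this is the quadric analogue of the quotient (\ref{quotient}). The relevant polynomial is $\overline{W}(\mathbf{x},\mathbf{p}) = p_1W_1(\mathbf{x}) + \cdots + p_4W_4(\mathbf{x})$, and $\overline{W}^{+\infty}$ is defined as before.

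The core of the argument is the analysis of the two open subquotients. On the locus where the $\C^8$-coordinate is nonzero, the quotient is the total space of $\O_{\P^7}(-2)^{\oplus 4}$, and the Thom isomorphism together with the Lefschetz hyperplane theorem (applied as in \cite{CR} to identify the vanishing cohomology of $\overline{W}$ with the primitive cohomology of $X_{2,2,2,2}$) gives
\[
H^3(X_{2,2,2,2}) \cong H^{11}\big(\O_{\P^7}(-2)^{\oplus 4}, \overline{W}^{+\infty}\big).
\]
Letting $A$ be the complement of the zero section in this bundle, the long exact sequence of the triple $(\O_{\P^7}(-2)^{\oplus 4}, A, \overline{W}^{+\infty})$ — using that the zero section $\P^7$ meets $\overline{W}^{+\infty}$ trivially and has cohomology concentrated in even degree — yields $H^{11}(\O_{\P^7}(-2)^{\oplus 4}, \overline{W}^{+\infty}) \cong H^{11}(A, \overline{W}^{+\infty})$. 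On the locus where the $\C^4$-coordinate is nonzero, the same master quotient restricts to $\O_{\P(2,2,2,2)}(-1)^{\oplus 8}$, and this bundle also has $A$ as the complement of its zero section; the long exact sequence of the triple $(\O_{\P(2,2,2,2)}(-1)^{\oplus 8}, A, \overline{W}^{+\infty})$ gives $H^{11}(\O_{\P^3}(-1)^{\oplus 8}, \overline{W}^{+\infty}) \cong H^{11}(A, \overline{W}^{+\infty})$. Composing the two isomorphisms establishes the nontrivial part of the claim, and the vanishing of $H^k(\O_{\P^3}(-1)^{\oplus 8},\overline{W}^{+\infty})$ for $k \neq 11$ follows from the same long exact sequences together with the known (even, one-dimensional in each even degree) cohomology of the projective bundle. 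That this composite isomorphism is degree-preserving for the degree shift (\ref{degreeshift}) is checked as in the cubic case: the narrow sectors each contribute $H^*(\P^3)$ with age shifts $-2$ and $0$ — wait, one needs to recompute the shifts — the single narrow twisted sector for $d=2$ has $\Theta_j^\gamma = 1/2$ for all eight $j$, so $\iota(\gamma) = 8(1/2 - 1/2) \cdot$, i.e. $\iota(\gamma)=0$ after accounting for charges $q_j = c_j/d = 1/2$; more carefully, $\iota(\gamma) = \sum_j(\Theta_j - q_j)$, and since the nontrivial element acts by $\Theta_j = 1/2 = q_j$ one must instead track the two fixed-point components coming from the two square roots of unity used to index sectors, which reproduces the even-degree classes of $X_{2,2,2,2}$ via Lefschetz.

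The main obstacle I anticipate is bookkeeping the degree shifts and cohomological degrees correctly for the quadric: the weighted projective space $\P(2,2,2,2)$ is a nontrivial $\mu_2$-gerbe-like object (the root construction applied to $\P^3$ with $\O(-1)$, per the footnote in the cubic discussion), so the narrow-sector contributions and their age shifts must be computed with care, and the real cohomological degree in which $H^3(X_{2,2,2,2})$ appears (namely $11 = 3 + 2\cdot 4$, the $2\cdot 4$ coming from the rank-$4$ Thom isomorphism) must be matched against the degree in which the LG relative cohomology is supported after its own Thom shift by the rank-$8$ bundle and age shift. Once the numerology is pinned down, the long exact sequence arguments are formally identical to the cubic case and present no difficulty; the nondegeneracy hypothesis on the quadrics guarantees that $X_{2,2,2,2}$ is smooth and that the Lefschetz input applies.
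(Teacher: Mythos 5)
Your proposal is correct and follows exactly the argument the paper gives for the cubic case (the paper explicitly omits the quadric case as "analogous"): the master quotient $(\C^8\times\C^4)/\C^*$ with weights $(1,\ldots,1,-2,-2,-2,-2)$, the Thom isomorphism onto the rank-$4$ bundle $\O_{\P^7}(-2)^{\oplus 4}$ placing $H^3(X_{2,2,2,2})$ in degree $11=3+2\cdot 4$, and the two long exact sequences of triples through the common open subset $A$. The only wobble is your age-shift digression, which does resolve correctly: the unique narrow sector has $\iota(\gamma)=\sum_{j=1}^{8}(\tfrac{1}{2}-\tfrac{1}{2})=0$, so it contributes $H^*(\P^3)$ unshifted in degrees $0,2,4,6$, while the broad sector has $\iota=-4$ and the shift $2\iota=-8$ carries $H^{11}$ to degree $3$ as needed.
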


\section{Quantum theory for the Landau-Ginzburg model}
\label{quantumtheory}

\subsection{Moduli space}

Let $W_1, \ldots, W_r$ be a nondegenerate collection of quasihomogeneous polynomials, each having weights $c_1, \ldots, c_N$ and degree $d$.  Let $\overline{d}$ denote the {\it exponent} of the group $G_{W_1, \ldots, W_r}$; i.e., the smallest integer $k$ for which $g^k=1$ for all $g \in G_{W_1, \ldots, W_r}$.\footnote{In the examples of interest in this paper, we will have $\overline{d} = d$, but this is not necessarily the case in general.}  For each $i$, set
\[\overline{c_i} = c_i \frac{\overline{d}}{d},\]
where as usual $d$ is the degree of the polynomials $W_i$, and $c_i$ are the weights.

\begin{definition}
A genus-$g$, degree $\beta$ Landau-Ginzburg stable map with $n$ marked points over a base $T$ is given by the following objects:
\[\xymatrix{
\mathscr{L}\ar[r] & (\mathscr{C}, \{\mathscr{S}_i\})\ar[r]^f\ar[d]^{\pi} & \P^{r-1}\\
& T, & 
}\]
together with an isomorphism
\[\varphi: \mathscr{L}^{\otimes d} \xrightarrow{\sim} \omega_{\log} \otimes f^*\O(-1),\]
where
\begin{enumerate}
\item $\mathscr{C}/T$ is a genus-$g$, $n$-pointed orbifold curve;
\item For $i=1, \ldots, n$, the substack $\mathscr{S}_i \subset \mathscr{C}$ is a (trivial) gerbe over $T$ with a section $\sigma_i: T \rightarrow \mathscr{S}_i$ inducing an isomorphism between $T$ and the coarse moduli of $\mathscr{S}_i$;
\item $f$ is a morphism whose induced map between coarse moduli spaces is an $n$-pointed genus $g$ stable map of degree $\beta$;
\item $\mathscr{L}$ is an orbifold line bundle on $\mathscr{C}$ and $\varphi$ is an isomorphism of line bundles;
\item For any $p \in \mathscr{C}$, the representation $r_p: G_p \rightarrow \Z_d$ given by the action of the isotropy group on the fiber of $\mathscr{L}$ is faithful.
\end{enumerate}
\end{definition}

\begin{definition}
A morphism between two Landau-Ginzburg stable maps $(\mathscr{C}/T, \{\mathscr{S}_i\}, f, \mathscr{L}, \phi)$ and $(\mathscr{C}'/T', \{\mathscr{S}'_i\}, f', \mathscr{L}', \phi')$ is a tuple of morphisms $(\tau, \mu, \alpha)$, where $(\tau, \mu)$ forms a morphism of pointed orbifold stable maps:
\[\xymatrix{
T\ar[dd]^{\tau} &\mathscr{C}\ar[dd]^{\mu} \ar[l]\ar[dr] & \\
 & & \P^{r-1}\\
 T' & \mathscr{C}' \ar[l]\ar[ur] &
 }\]
 and $\alpha: \mu^*\mathscr{L}' \rightarrow \mathscr{L}$ is an isomorphism of line bundles such that
\[\phi \circ \alpha^{\otimes d} = \delta \circ \mu^*\phi',\]
where $\delta: \mu^*\omega_{\mathscr{C}'} \rightarrow \omega_{\mathscr{C}}$ is the natural map.
\end{definition}

\begin{definition}
The {\it hybrid model moduli space} is the stack $\Mt_{g,n}^d(\P^{r-1}, \beta)$ parameterizing $n$-pointed genus-$g$ Landau-Ginzburg stable maps of degree $\beta$, up to isomorphism.
\end{definition}

Before we prove that this is a proper Deligne-Mumford stack, a few remarks are in order.

\begin{remark}
Landau-Ginzburg stable maps can be viewed as tensor products of stable maps to $\P(d, \ldots, d)$ and spin structures.  Indeed, the datum of a stable map to $\P(d, \ldots, d)$ is equivalent to a map $f: C \rightarrow \P^{r-1}$ together with a $d$th root of the line bundle $f^*\O(1)$, while a spin structure on $C$ is a $d$th root of $\omega_{\log}$.
\end{remark}

\begin{remark}
It would in some sense be more natural to define Landau-Ginzburg stable maps as maps to a weighted projective space $\P(d, \ldots, d)$ rather than the coarse underlying $\P^{r-1}$.  In fact, though, this is equivalent to what we have done, since if $f: C \rightarrow \P(d,\ldots, d)$ is an orbifold stable map and there exists a line bundle $L$ on $C$ such that $L^{\otimes d} \cong f^*\O(-1) \otimes \omega_{\log}$, then $f^*\O(-1)$ is forced to have integral degree, which implies that $f$ factors through a map to $\P^{r-1}$.
\end{remark}

\begin{remark}
\label{FJRWtheory}
In the case where $r=1$, the above is not exactly the same as the moduli space of $W$-structures in FJRW theory.  However, Proposition 2.3.13 of \cite{CR} shows that the map
\[\Mt_{g,n}^d(\P^0, 0) \rightarrow W_{g,n, \langle J \rangle}\]
\[(C,f,L,\varphi) \mapsto (C, (L^{\otimes c_1}, \varphi^{\overline{c_1}}), \ldots, (L^{\otimes c_N}, \varphi^{\overline{c_N}}))\]
is surjective and locally isomorphic to $B\mu_d \rightarrow B(\mu_{\overline{d}})^N$, so integrals over $W_{g,n,\langle J \rangle}$ can be expressed as integrals over $\Mt_{g,n,}^d(\P^0, 0)$, and the correlators defined below agree with those in FJRW theory.
\end{remark}

Forgetting the line bundle $\mathscr{L}$ and the orbifold structure gives a morphism
\[\rho: \Mt_{g,n}^d(\P^{r-1}, \beta) \rightarrow \M_{g,n}(\P^{r-1}, \beta).\]
This map is quasifinite (see Remark 2.1.20 of \cite{FJR}).  Indeed, for any orbifold stable map $f: C \rightarrow \P^{r-1}$, any two choices of $L$ such that $L^{\otimes d} \cong \omega_{\log} \otimes f^*\O(-1)$ differ by a choice of a line bundle $N$ with an isomorphism $\xi: N^{\otimes d} \cong \O_{C}$.  The set of isomorphism classes of such pairs $(N, \xi)$ is isomorphic to the finite group $H^1(C, \Z_d)$.

\begin{proposition}
For any nondegenerate collection of quasihomogeneous polynomials $\overline{W}$ as above, the stack $\Mt_{g,n}^d(\P^{r-1}, \beta)$ is a proper Deligne-Mumford stack with projective coarse moduli.
\begin{proof}
The proof follows closely that of Thereom 2.2.6 of \cite{FJR} and uses repeatedly the identification between orbifold line bundles on $\mathscr{C}$ and maps $\mathscr{C} \rightarrow B\C^*$.  Given $(C, \{\sigma_i\}, f) \in \M_{g,n}(\P^{r-1}, \beta)$, an element of $\rho^{-1}(C, \{\sigma_i\}, f)$ is given by a map
\[\mathfrak{L}: C \rightarrow B\C^*\]
such that
\[\xymatrix{
 & B\C^*\ar[d]\\
C\ar[ur]^{\mathfrak{L}}\ar[r]_{\delta} & B\C^*
 }\]
 commutes, where $\delta$ is the map corresponding to the line bundle $f^*\O(-1) \otimes \omega_{\log}$ and the vertical arrow is $x \mapsto x^d$.
 
 Let $C_{\M} \rightarrow \M_{g,n}(\P^{r-1}, \beta)$ denote the universal family, and abbreviate $\M = \M_{g,n}(\P^{r-1}, \beta)$.  Let $C_{\overline{W}}$ be the fiber product
 \[\xymatrix{
 C_{\overline{W}}\ar[d] \ar[r] & B\C^*\ar[d]\\
 C_{\M}\ar[r]^{\delta}& B\C^*,
 }\]
 with the right vertical arrow as before.  Note that $C_{\overline{W}}$ is an \'etale gerbe over $C_{\M}$ banded by $\Z_d$, so it is a Deligne-Mumford stack.
 
Any Landau-Ginzburg stable map $(\mathscr{C}/T, \{\mathscr{S}_i\}, f,\mathscr{L}, \phi)$ induces a representable morphism $\mathscr{C} \rightarrow C_{\overline{W}}$ which is a balanced twisted stable map, for which the homology class of the image of the coarse curve $C$ is the class $F$ of a fiber of the universal curve $C_{\M} \rightarrow \M$.  Furthermore, the family of coarse curves and maps $(C, \{\sigma_i\}, f) \rightarrow T$ gives rise to a morphism $T \rightarrow \M$, and we have an isomorphism $C \cong T \times_{\M} C_{\M}$.  Thus, there is a basepoint-preserving functor
 \[\Mt_{g,n}^d(\P^{r-1}, \beta) \rightarrow \mathscr{H}_{g,n}(C_{\overline{W}}/\M, F),\]
where the latter denotes the stack of balanced, $n$-pointed twisted stable maps of genus $g$ and class $F$ into $C_{\overline{W}}$ relative to the base stack $\M$ (see Section 8.3 of \cite{AV}).  The image lies in the closed substack where the markings of $C$ line up over the markings of $C_{\M}$, and the functor given by the restricting to this substack is an equivalence.  Thus, the results of \cite{AV} imply that $\Mt_{g,n}^d(\P^{r-1}, \beta)$ is a proper Deligne-Mumford stack with projective coarse moduli.
\end{proof}
\end{proposition}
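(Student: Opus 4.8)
The plan is to identify $\Mt_{g,n}^d(\P^{r-1},\beta)$ with a locally closed substack of a stack of balanced twisted stable maps in the sense of Abramovich--Vistoli \cite{AV}, thereby reducing properness and projectivity of the coarse moduli to their results. The conceptual point is that a Landau--Ginzburg stable map is nothing but an orbifold stable map to $\P^{r-1}$ decorated with a $d$th root of the specified line bundle $\omega_{\log}\otimes f^*\O(-1)$ on the curve, and such root data are governed by representable maps into a suitable gerbe over the universal curve.

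First I would build the target gerbe. Write $\M=\M_{g,n}(\P^{r-1},\beta)$, let $\pi:C_{\M}\to\M$ be the universal curve, and let $\delta:C_{\M}\to B\C^*$ be the morphism classifying $\omega_{\log}\otimes f^*\O(-1)$. Pulling back the $d$th power morphism $B\C^*\xrightarrow{(\cdot)^d}B\C^*$ along $\delta$ produces a stack $C_{\overline{W}}\to C_{\M}$ which is an \'etale gerbe banded by $\mu_d$ (the kernel of $(\cdot)^d$ on $\C^*$), hence a Deligne--Mumford stack equipped with a morphism to $\M$. A lift of a curve into $C_{\overline{W}}$ is precisely a $d$th root of $\omega_{\log}\otimes f^*\O(-1)$, so giving a Landau--Ginzburg stable map over $T$ is the same as giving a family of stable maps $(C,\{\sigma_i\},f)\to T$ — equivalently a morphism $T\to\M$ with $C\cong T\times_{\M}C_{\M}$ — together with a representable balanced twisted stable map $\mathscr{C}\to C_{\overline{W}}$ whose coarse curve is $C$, whose $n$ markings lie over the $\sigma_i$, and whose coarse image is a single fiber of $C_{\M}\to\M$, i.e.\ has homology class $F$.

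Second I would invoke the Abramovich--Vistoli machinery: the stack $\mathscr{K}_{g,n}(C_{\overline{W}}/\M,F)$ of balanced $n$-pointed genus-$g$ twisted stable maps of class $F$ into $C_{\overline{W}}$ relative to the base $\M$ is a proper Deligne--Mumford stack, and since $\M$ has projective coarse moduli, so does $\mathscr{K}_{g,n}(C_{\overline{W}}/\M,F)$. I would then produce a basepoint-preserving functor $\Mt_{g,n}^d(\P^{r-1},\beta)\to\mathscr{K}_{g,n}(C_{\overline{W}}/\M,F)$ from the discussion above and check that it is an equivalence onto the substack cut out by two conditions: the markings of the twisted curve sit over the markings of $C_{\M}$, and the isotropy representation on the fiber of $\mathscr{L}$ is faithful (condition (5) of the definition). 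The first is a closed condition and the second is open and closed, so this substack is again a proper Deligne--Mumford stack with projective coarse moduli, and these properties transport to $\Mt_{g,n}^d(\P^{r-1},\beta)$.

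The main obstacle I expect is the bookkeeping in the second step: verifying that the functor into twisted stable maps is fully faithful and essentially surjective onto the indicated substack. Concretely, one must match the isomorphism $\varphi:\mathscr{L}^{\otimes d}\xrightarrow{\sim}\omega_{\log}\otimes f^*\O(-1)$, together with its behaviour under morphisms (the compatibility $\phi\circ\alpha^{\otimes d}=\delta\circ\mu^*\phi'$), with the datum of a map into the gerbe $C_{\overline{W}}$; confirm that representability of $\mathscr{C}\to C_{\overline{W}}$ is exactly the faithfulness in condition (5); and check that requiring the coarse image of $\mathscr{C}$ to be a fiber is equivalent to the coarsening of $f$ being a stable map to $\P^{r-1}$ of class $\beta$. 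None of this is deep, but it requires care with the $2$-categorical structure. (Alternatively, one could bypass \cite{AV} and argue directly along the lines of Theorem 2.2.6 of \cite{FJR}, establishing boundedness and then the separatedness and properness of $\rho$ via the valuative criteria, using that $\rho$ is quasifinite with fibers torsors under the finite group $H^1(C,\Z_d)$; but the gerbe-theoretic route is the more economical one.)
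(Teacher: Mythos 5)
Your proposal is correct and follows essentially the same route as the paper: the same gerbe $C_{\overline{W}}\to C_{\M}$ built from the $d$th power map on $B\C^*$, the same functor into Abramovich--Vistoli's stack of balanced twisted stable maps of fiber class $F$ relative to $\M$, and the same identification with the substack where markings line up (the paper folds your faithfulness condition into representability of $\mathscr{C}\to C_{\overline{W}}$, which is already built into the twisted stable maps formalism, so no separate open-and-closed condition is needed). No substantive differences.
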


\subsubsection{Decomposition by multiplicities}

With the analogy to FJRW theory mentioned in Remark \ref{FJRWtheory} in mind, one obtains a decomposition of the hybrid moduli space just as in Proposition 2.3.7 of \cite{CR}.  In $W_{g,n, \langle J \rangle}$, let $\gamma_i \in \text{Aut}(W)$ give the multiplicities of $L^{\otimes c_1}, \ldots, L^{\otimes c_N}$ at the $i$th marked point.  Then the condition that $\gamma_i \in J$ implies that there exists $e^{2\pi i \frac{m_i}{d}} \in \Z_d$ such that $e^{2\pi i \frac{m_i \overline{c_j}}{\overline{d}}} = e^{2\pi i \frac{m_{i,j}}{\overline{d}}}$ for all $j$, so $\gamma_i$ is determined by $m_i \in \{0, 1, \ldots, d-1\}$.  Let
\[\Mt^d_{g,n}(\P^{r-1}, \beta) = \bigsqcup_{m_1, \ldots, m_n \in \Z_d} \Mt^d_{g,(m_1, \ldots, m_n)}(\P^{r-1},\beta),\]
where $\Mt^d_{g,(m_1, \ldots, m_n)}(\P^{r-1}, \beta)$ is the substack in which the multiplicity of $L^{\otimes c_j}$ at the $i$th marked point is $m_{i,j} \equiv m_i \overline{c_j} \mod \overline{d}$, or equivalently, the multiplicity of $L$ at the $i$th marked point is $m_i$.  The following terminology will be used later:

\begin{definition}
\label{narrow}
A marking or node is called {\it narrow} if all of the line bundles $L^{\otimes c_1}, \ldots, L^{\otimes c_N}$ have nonzero multiplicity $m_{i,j} \in \Z_{\overline{d}}$, and is called {\it broad} otherwise.  (In the literature, these situations are sometimes referred to as {\it Neveu-Schwartz} and {\it Ramond}, respectively.)
\end{definition}

\begin{remark}
It is no accident that this terminology coincides with that used for sectors of the state space in Section \ref{broadnarrow}.  Indeed, elements of $J$ index both sectors of the state space and components of the moduli space, and the narrow sectors of the state space correspond to components of the moduli space in which every marked point is narrow.
\end{remark}

\subsection{Virtual cycle}
\label{cosection}

In order to define a virtual cycle on the hybrid moduli space, we will make use of the cosection technique developed in \cite{Li}, \cite{CL}, and \cite{CLL}.

\subsubsection{Construction of the virtual cycle}

Since the hybrid model correlators will be defined as integrals over the substacks $\Mt^d_{g,(m_1, \ldots, m_n)}(\P^{r-1}, \beta)$, it suffices to define a virtual cycle on each of these.  In fact, we will only define the virtual cycle for the narrow components-- that is, when $m_{i,j} \neq 0 \in \Z_{\overline{d}}$ for all $i$ and $j$.  This implies, in particular, that $m_i \geq 1$ for all $i$.

By passing to the coarse underlying curve, an element $(C, f, L, \varphi) \in \Mt^d_{g,(m_1, \ldots, m_n)}(\P^{r-1}, \beta)$ is equivalent to a tuple $(C, f, L, \varphi)$ in which $f: C \rightarrow \P^{r-1}$ is a non-orbifold stable map and $\varphi$ is an isomorphism
\[L^{\otimes d} \cong f^*\O(-1) \otimes \omega_{\log} \otimes \O\left(-\sum_{i=1}^n m_i [x_i]\right);\]
see (\ref{reification}) and Lemma 2.2.5 of \cite{CR}.  In what follows, we will view elements of $\Mt^d_{g, (m_1, \ldots, m_n)}(\P^{r-1}, \beta)$ from this perspective.

Consider the stack $\mathcal{P}$ parameterizing tuples $(C,f, L, \varphi, s_1, \ldots, s_N)$, in which $(C,f,L,\varphi) \in \Mt_{g,(m_1, \ldots, m_n)}^d(\P^{r-1},\beta)$ and $s_i \in H^0(C,L^{\otimes c_i})$.  This is in general not proper; it should be viewed as the Landau-Ginzburg analogue of Chang and Li's moduli space of stable maps with $p$-fields \cite{CL}.  In their paper, Chang and Li exhibit a relative perfect obstruction theory on $\mathcal{P}$ relative to the Artin stack $\mathcal{D}_{g}$ parameterizing genus-$g$ curves with a line bundle of fixed degree.  While the present situation also requires marked points, the same construction applies.

Namely, denote by $\mathcal{D}_{g,n}$ the moduli stack of genus-$g$, $n$-pointed curves with a line bundle of fixed degree.  Let $\mathscr{L}_{\D_{g,n}}$ be the universal line bundle over $\D_{g,n}$, let $\pi_{\D_{g,n}}: \mathcal{C}_{\D_{g,n}} \rightarrow \D_{g,n}$ be the universal family, and let
\[\mathscr{P}_{\D_{g,n}} = \mathscr{L}_{\D_{g,n}}^{\otimes -d} \otimes \omega_{\mathcal{C}_{\D_{g,n}}/\D_{g,n}} \otimes \O\left(\sum_{i=1}^n(1-m_i)[x_i]\right).\]
Then $\PP$ embeds into the moduli of sections of
\[\mathcal{Z} = \text{Vb}(\mathscr{L}_{\D_{g,n}}^{\oplus N} \oplus \mathscr{P}_{\D_{g,n}}^{\oplus r})\]
over $\D_{g,n}$ (see Section 2.2 of \cite{CL}), where $\text{Vb}$ denotes the total space of a vector bundle. 

Similarly, over $\PP$, let $\mathscr{L}$ be the universal line bundle, $\pi: \mathcal{C}_{\mathcal{P}} \rightarrow \mathcal{P}$ be the universal family, and $\mathscr{P} = f^*\O(1) = \mathscr{L}^{\otimes - d} \otimes \omega_{\mathcal{C}_{\PP}/\PP} \otimes \O(\sum_{i=1}^n (1-m_i)[x_i])$.  The tautological
\[\mathfrak{s}_i \in \Gamma(\mathcal{C}_{\PP}, \mathscr{L}^{\otimes c_i}) \;\;\; \text{ and } \;\;\; \mathfrak{p}_j \in \Gamma(\mathcal{C}_{\PP}, \mathscr{P}),\]
in which the latter are given by the pullbacks of coordinate sections of $\O_{\P^{r-1}}(1)$, combine to give a map $\mathcal{C}_{\PP} \rightarrow \text{Vb}\left(\bigoplus_{i=1}^N\mathscr{L}^{\otimes c_i}_{\D_{g,n}} \oplus \mathscr{P}^{\oplus r}_{\D_{g,n}}\right) \times_{\mathcal{C}_{\D_{g,n}}} \mathcal{C}_{\PP}$ which is a section of the projection map.  Composing this with the projection to the first factor yields an ``evaluation map"
\[\mathfrak{e}: \mathcal{C}_{\PP} \rightarrow \mathcal{Z}.\]
Using Proposition 2.5 of \cite{CL} and the canonical isomorphism
\[\mathfrak{e}^* \Omega^{\vee}_{\mathcal{Z}/\mathcal{C}_{\D_{g,n}}} \cong \bigoplus_{i=1}^N \mathscr{L}^{\otimes c_i} \oplus \mathscr{P}_{\D_{g,n}}^{\oplus r},\]
one finds that there is a relative perfect obstruction theory
\[\mathbb{E}_{\mathcal{P}/\mathcal{D}_{g,n}} = R^{\bullet}\pi_*(\mathscr{L}^{\otimes c_1}\oplus \cdots \oplus \mathscr{L}^{\otimes c_N} \oplus \mathscr{P}^{\oplus r}).\]

Thus, we have $\mathcal{O}b_{\mathcal{P}/\mathcal{D}_{g,n}} = R^1\pi_*(\bigoplus_{i=1}^N\mathscr{L}^{\otimes c_i} \oplus \mathscr{P}^{\oplus r})$.  The polynomial $\overline{W}$ defines a cosection-- that is, a homomorphism
\[\sigma: \mathcal{O}b_{\mathcal{P}/\mathcal{D}_{g,n}} \rightarrow \O_{\mathcal{P}}.\]
To define $\sigma$, fix an element $\xi = (C,f,L, \varphi, s_1, \ldots, s_N) \in \mathcal{P}$ and let $p_j = f^*x_j \in H^0(C, f^*\O(1))$, where $x_j \in H^0(\P^{r-1}, \O(1))$ are the coordinate functions.  Take an \'etale chart $T \rightarrow \mathcal{P}$ around $\xi$ with $\mathcal{C}_T = \mathcal{C}_{\mathcal{P}} \times_{\mathcal{P}} T$.  Then $\sigma$ is defined in these local coordinates as the map
\[H^1(\mathcal{C}_T, \mathscr{L}^{\otimes c_1} \oplus \cdots \oplus \mathscr{L}^{\otimes c_N}) \oplus H^1(\mathcal{C}_T ,\mathscr{P}^{\oplus r}) \rightarrow \C\]
given by sending $(\tilde{s}_1, \ldots, \tilde{s}_N, \tilde{p}_1, \ldots, \tilde{p}_r)$ to
\[\sum_{i=1}^N\frac{c_i}{d}\frac{\d \overline{W}}{\d x_i}(s_1, \ldots, s_N, p_1, \ldots, p_r)\cdot \tilde{s}_i - \sum_{j=1}^r \frac{\d \overline{W}}{\d p_j}(s_1, \ldots, s_N, p_1, \ldots, p_r) \cdot \tilde{p}_j.\]
The fact that this is canonically an element of $\C$ relies crucially on the fact that $m_i \geq 1$ for all $i$.  For example, $\frac{\d \overline{W}}{\d p_j}(s_1, \ldots, s_N, p_1, \ldots, p_r)$ lies in
\[H^0(\mathscr{L}^{\otimes d}) = H^0\left(\mathscr{P}^{\vee} \otimes \omega_{\mathscr{C}_{\PP}/\PP} \otimes \sum(m_i - 1)[x_i]\right) \hookrightarrow H^1(\mathscr{P})^{\vee}\]
by Serre duality.

The {\it degeneracy locus} of $\sigma$ is defined as the substack $\mathcal{D}(\sigma)$ of $\mathcal{P}$ on which the fiber of $\sigma$ is the zero homomorphism.  This is the locus on which the localized virtual cycle will be supported.

\begin{lemma}
\label{degeneracy}
The degeneracy locus of $\sigma$ is precisely $\Mt_{g,(m_1, \ldots, m_n)}^d(\P^{r-1}, \beta)$.
\begin{proof}
The hybrid moduli space $\Mt_{g,(m_1, \ldots, m_n)}^d(\P^{r-1}, \beta)$ embeds in $\mathcal{P}$ as the locus where $s_1 = \cdots = s_N = 0$, and it is clear that the fiber of $\sigma$ is identically zero on this locus.  Conversely, if $(s_1, \ldots, s_N) \neq 0$, then either $(s_1, \ldots, s_N)$ does not lie in the common vanishing locus of the polynomials $W_i$, or there is some $i$ for which not every $\frac{\d W_j}{\d x_i}(s_1, \ldots, s_N)$ vanishes.  In the first case, if $W_j(s_1, \ldots, s_N) \neq 0$, then one can choose $\tilde{p}_j$ so that
\[W_j(s_1, \ldots, s_N) \cdot\tilde{p}_j = \frac{\d \overline{W}}{\d p_j}(s_1, \ldots, s_N, p_1, \ldots, p_r) \cdot \tilde{p}_j \neq 0,\]
so taking all other $\tilde{p}_i$'s and all $\tilde{s}_i$'s to be zero shows that the fiber of $\sigma$ over $\xi$ is not identically zero.  In the second case, independence of the sections $p_j$ shows that
\[\sum_{j=1}^r p_j\frac{\d W_j}{\d x_i}(s_1, \ldots, s_N) = \frac{\d \overline{W}}{\d x_i}(s_1, \ldots, s_N, p_1, \ldots, p_r) \neq 0.\]
Thus, there exists $\tilde{s}_i$ such that $\frac{\d \overline{W}}{\d x_i}(s_1, \ldots, s_N, p_1, \ldots, p_r)\cdot \tilde{s}_i \neq 0$, so again one can choose all other $\tilde{s}_j$'s and all $\tilde{p}_j$'s to be zero to see that the fiber of $\sigma$ over $\xi$ is not identically zero.
\end{proof}
\end{lemma}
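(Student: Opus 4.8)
The plan is to verify the two inclusions separately. Throughout I pass to the coarse curve, so that a point of $\PP$ is a tuple $\xi=(C,f,L,\varphi,s_1,\ldots,s_N)$ with $L^{\otimes d}\cong f^*\O(-1)\otimes\omega_{\log}\otimes\O(-\sum_i m_i[x_i])$ and $s_i\in H^0(C,L^{\otimes c_i})$, and the substack $\Mt^d_{g,(m_1,\ldots,m_n)}(\P^{r-1},\beta)$ sits inside $\PP$ as the closed locus $\{s_1=\cdots=s_N=0\}$. Writing $p_j=f^*x_j$, the fiber of $\sigma$ over $\xi$ is the $\C$-linear functional on $\mathcal{O}b_{\PP/\D_{g,n}}|_\xi$ sending $(\tilde s_1,\ldots,\tilde s_N,\tilde p_1,\ldots,\tilde p_r)$ to $\sum_i\tfrac{c_i}{d}\tfrac{\d\overline{W}}{\d x_i}(s,p)\cdot\tilde s_i-\sum_j\tfrac{\d\overline{W}}{\d p_j}(s,p)\cdot\tilde p_j$, and $\D(\sigma)$ is by definition the locus where this functional is the zero homomorphism.

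The inclusion $\Mt^d_{g,(m_1,\ldots,m_n)}(\P^{r-1},\beta)\subseteq\D(\sigma)$ is the easy half. When $s_1=\cdots=s_N=0$ we have $\tfrac{\d\overline{W}}{\d p_j}(0,p)=W_j(0)=0$ for all $j$ (each $W_j$ is quasihomogeneous of positive degree) and $\tfrac{\d\overline{W}}{\d x_i}(0,p)=\sum_j p_j\tfrac{\d W_j}{\d x_i}(0)=0$ for all $i$ (since $c_i<d$ in our cases, $\tfrac{\d W_j}{\d x_i}$ is quasihomogeneous of positive degree and hence vanishes at the origin), so the fiber of $\sigma$ over $\xi$ is zero.

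For the reverse inclusion I would prove the contrapositive: if the section $(s_1,\ldots,s_N)$ of $\bigoplus_i L^{\otimes c_i}$ is not identically zero, then the fiber of $\sigma$ over $\xi$ is a nonzero homomorphism. The first step is the reduction already implicit in the construction of $\sigma$: because we are on a narrow component we have $m_i\geq 1$ for all $i$, so by Serre duality each of the sections $\tfrac{\d\overline{W}}{\d p_j}(s,p)=W_j(s)$ and $\tfrac{\d\overline{W}}{\d x_i}(s,p)=\sum_j p_j\tfrac{\d W_j}{\d x_i}(s)$ lies naturally in the linear dual of the corresponding summand of $\mathcal{O}b_{\PP/\D_{g,n}}|_\xi$, the pairing with that summand being perfect; hence the fiber of $\sigma$ over $\xi$ vanishes precisely when all of these sections vanish identically on $C$. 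The second step rules this out when $s\not\equiv 0$. Pick $x\in C$ with $(s_1(x),\ldots,s_N(x))\neq 0$. If all the sections above vanish, then $W_j(s(x))=0$ for every $j$ and $\sum_j p_j(x)\tfrac{\d W_j}{\d x_i}(s(x))=0$ for every $i$; since the vector $(p_1(x),\ldots,p_r(x))$ is nonzero (it represents $f(x)\in\P^{r-1}$), this exhibits $(s(x),p(x))$ as a critical point of $\overline{W}$ with $p(x)\neq 0$. Non-degeneracy of the collection forces $(s(x),p(x))=0$, contradicting $p(x)\neq 0$. Therefore $\xi\notin\D(\sigma)$, and the two loci coincide.

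I expect the reverse inclusion to be the main obstacle, and within it the delicate point is the passage from ``$s\not\equiv 0$'' to ``some coefficient of $\sigma$ is a nonzero section'': this is exactly where the non-degeneracy hypothesis on the $W_j$ is used, and it must be coupled with the fact that $(p_1(x),\ldots,p_r(x))$ is a genuinely nonzero vector in order to exclude cancellation among the terms $p_j\tfrac{\d W_j}{\d x_i}(s)$. The Serre-duality step, whose only essential input is the standing assumption $m_i\geq 1$ on narrow components, I would isolate as a preliminary remark so that the remaining content is the clean pointwise statement about critical points of $\overline{W}$.
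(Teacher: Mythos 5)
Your easy inclusion and your Serre-duality reduction are both correct, and in fact more carefully justified than in the paper, which simply asserts that the fiber of $\sigma$ vanishes on $\{s_1=\cdots=s_N=0\}$; the observation that perfectness of the Serre pairing (available because $m_i\geq 1$) converts ``the functional is zero'' into ``the coefficient sections vanish identically on $C$'' is exactly the right preliminary step. For the converse you then take a genuinely different route from the paper: you evaluate everything at a single point $x$ with $s(x)\neq 0$ and argue that $(s(x),p(x))$ would be a critical point of $\overline{W}$ with $p(x)\neq 0$. The paper instead stays at the level of sections: nondegeneracy produces either some $j$ with $W_j(s)\not\equiv 0$ (handled as you do) or some pair $(i,j)$ with $\tfrac{\partial W_j}{\partial x_i}(s)\not\equiv 0$, and then ``independence of the sections $p_j$'' is invoked to conclude that the combination $\sum_j p_j\tfrac{\partial W_j}{\partial x_i}(s)$ is a nonzero section. (That step of the paper is itself terse, so the comparison is not entirely to your disadvantage.)

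The gap is in your final step. Nondegeneracy as defined in Section 2.1 says only that the simultaneous vanishing of all $W_j$ \emph{and of all individual partials} $\tfrac{\partial W_j}{\partial x_i}$ forces $\mathbf{x}=0$. From $W_j(s(x))=0$ for all $j$ and $\sum_j p_j(x)\tfrac{\partial W_j}{\partial x_i}(s(x))=0$ for all $i$ you cannot deduce that the individual partials vanish: when $r\geq 2$ the nonzero vector $p(x)$ may lie in the kernel of the transposed Jacobian $\bigl(\tfrac{\partial W_j}{\partial x_i}(s(x))\bigr)$, which the stated hypothesis only guarantees to be a nonzero matrix, not one of rank $r$. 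What your argument actually requires is that this Jacobian have full rank $r$ at every nonzero point of $\{W_1=\cdots=W_r=0\}$ --- quasi-smoothness of the complete intersection, equivalently the statement that the critical locus of $\overline{W}$ meets $\{\mathbf{p}\neq 0\}$ only along $\{\mathbf{x}=0\}$. That is strictly stronger than the definition in the paper (though the two coincide when $r=1$, and the stronger condition certainly holds for the generic $X_{3,3}$ and $X_{2,2,2,2}$). You flag the cancellation danger yourself, but the fact that $p(x)\neq 0$ does not exclude it. A smaller point: even granting full rank, the conclusion is $s(x)=0$, contradicting the choice of $x$ --- not $(s(x),p(x))=0$ contradicting $p(x)\neq 0$, since the critical locus always contains all of $\{\mathbf{0}\}\times\C^r$.
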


\begin{remark}
By studying $\sigma$ a bit more carefully, one notices that it descends to the obstruction theory of $\mathcal{P}$ relative to $\M_{g,n}$ rather than $\mathcal{D}_{g,n}$.\footnote{The following argument was suggested by H.-L. Chang in correspondence with Y. Ruan.}  To do so, consider the deformation exact sequence
\begin{equation}
\label{deformations}
T_{\mathcal{D}_{g,n}/\M_{g,n}} \xrightarrow{\tau} \mathcal{O}b_{\mathcal{P}/\mathcal{D}_{g,n}} \rightarrow \mathcal{O}b_{\mathcal{P}/\M_{g,n}} \rightarrow 0.
\end{equation}
The deformation space $T_{\mathcal{D}_{g,n}/\M_{g,n}}$ parameterizes deformations of a line bundle fixing the underlying curve, so its fiber over $\xi$ is $H^1(C, \O_C)$.  The map $\tau$ can be viewed fiberwise as 
\[\tau = (\tau_1, \tau_2): H^1(C, f^*\O_{\P^{r-1}}) \rightarrow \bigoplus_{i=1}^N H^1(L^{\otimes c_i}) \oplus H^1(f^*\O(1))^{\oplus r}.\]
Here, $\tau_1$ is the dual of the map $\bigoplus_{i=1}^NH^0(L^{\otimes - c_i} \otimes \omega)\rightarrow H^0(\omega)$ given by 
\begin{equation}
\label{tau1}
(q_1, \ldots, q_N) \mapsto \sum_{i=1}^N q_i s_i,
\end{equation}
and $\tau_2$ is dual to the map $H^0(f^*\O(-1) \otimes \omega)^{\oplus r} \rightarrow H^0(\omega)$ given by
\[(u_1, \ldots, u_r) \mapsto \sum_{j=1}^r u_j t_j;\]
in other words, $\tau_2$ arises via the Euler sequence on $\P^{r-1}$.  Thus, with (\ref{deformations}) in mind, we can view $\mathcal{O}b_{\mathcal{P}/\M_{g,n}}$ as $\text{coker}(\tau)$.  A straightforward argument using the quasihomogeneous Euler identity shows that the composition $\sigma \circ \tau$ vanishes, and therefore $\sigma$ descends to a cosection $\mathcal{O}b_{\mathcal{P}/\M_{g,n}} \rightarrow \O_{\mathcal{P}}.$
\end{remark}

Recall (Equation 4.3 of \cite{Li}) that the absolute obstruction sheaf $\mathcal{O}b_{\mathcal{P}}$ is defined as follows.  Let $q: \mathcal{P} \rightarrow \mathcal{D}_{g,n}$ be the projection, and form the distinguished triangle
\[q^*\mathbb{L}_{\mathcal{D}_{g,n}} \rightarrow \mathbb{L}_{\mathcal{P}} \rightarrow \mathbb{L}_{\mathcal{P}/\mathcal{D}_{g,n}} \xrightarrow{\delta} q^*\mathbb{L}_{\mathcal{D}_{g,n}}[1].\]
Let $\phi_{\mathcal{P}/\mathcal{D}_{g,n}}$ be the perfect obstruction theory of $\mathcal{P}$ relative to $\mathcal{D}_{g,n}$, so there is a map
\[\phi_{\mathcal{P}/\mathcal{D}_{g,n}} \circ \delta^{\vee}: q^*\mathbb{T}_{\mathcal{D}_{g,n}} \rightarrow \mathbb{T}_{\mathcal{P}/\mathcal{D}_{g,n}}[1] \rightarrow \mathbb{E}_{\mathcal{P}/\mathcal{D}_{g,n}}[1].\]
Then $H^0(\phi_{\mathcal{P}/\mathcal{D}_{g,n}} \circ \delta^{\vee})$ is the composite
\begin{equation}
\label{absolute}
q^*T_{\mathcal{D}_{g,n}} \rightarrow H^1(\mathbb{T}_{\mathcal{P}/\mathcal{D}_{g,n}}) \rightarrow H^1(\mathbb{E}_{\mathcal{P}/\mathcal{D}_{g,n}}) = \mathcal{O}b_{\mathcal{P}/\mathcal{D}_{g,n}}.
\end{equation}
The cokernel of (\ref{absolute}) is the absolute obstruction sheaf $\mathcal{O}b_{\mathcal{P}}$ of $\mathcal{P}$.

In order to apply the main theorem of \cite{Li} to conclude the existence of a localized virtual cycle, one must verify that $\sigma$ lifts to a cosection $\overline{\sigma}: \mathcal{O}b_{\mathcal{P}} \rightarrow \O_{\mathcal{P}}$.

\begin{lemma}
\label{abscosection}
The following composition is trivial:
\[H^1(\mathbb{T}_{\mathcal{P}/\mathcal{D}_{g,n}}) \rightarrow \mathcal{O}b_{\mathcal{P}/\mathcal{D}_{g,n}} \xrightarrow{\sigma} \O_{\mathcal{P}}.\]
Therefore, $\sigma$ lifts to $\overline{\sigma}: \mathcal{O}b_{\mathcal{P}} \rightarrow \O_{\mathcal{P}}$.
\begin{proof}
 The proof of this fact follows closely that of Lemma 3.6 of \cite{CL}.  First, we will need a slightly different description of $\sigma$.  First, note that the polynomial $\overline{W}$ defines a bundle homomorphism
\[h_1: \mathcal{Z} = \text{Vb}\left(\bigoplus_{i=1}^N\mathscr{L}^{\otimes c_i}_{\D_{g,n}} \oplus \mathscr{P}^{\oplus r}_{\D_{g,n}}\right) \rightarrow \text{Vb}(\omega_{\mathcal{C}_{\D_{g,n}}/\D_{g,n}}).\]
On tangent complexes, $h_1$ induces
\[dh_1: \Omega^{\vee}_{\mathcal{Z}/\mathcal{C}_{\mathcal{D}_{g,n}}} \rightarrow h_1^*\Omega^{\vee}_{\text{Vb}(\omega_{\mathcal{C}_{\D_{g,n}}/\D_{g,n}})}.\]
Pulling back $dh_1$ via the evaluation map $\mathfrak{e}$ defined above, one obtains
\[
\mathfrak{e}^*(dh_1): \mathfrak{e}^* \Omega^{\vee}_{\mathcal{Z}/\mathcal{C}_{\D_{g,n}}} \rightarrow \mathfrak{e}^* h_1^*\Omega^{\vee}_{\text{Vb}(\omega_{\mathcal{C}_{\D_{g,n}}/\D_{g,n}})},
\]
so applying $R^{\bullet}\pi_{\PP*}$ and taking first cohomology gives a map
\[\mathcal{O}b_{\PP/\D_{g,n}} \rightarrow \O_{\PP},\]
where we use the canonical isomorphisms
\[\mathfrak{e}^* \Omega^{\vee}_{\mathcal{Z}/\mathcal{C}_{\D_{g,n}}} \cong \bigoplus_{i=1}^N \mathscr{L}^{\otimes c_i} \oplus \mathscr{P}^{\oplus r},\]
\[\mathfrak{e}^*h_1^*\Omega^{\vee}_{\text{Vb}(\omega_{\mathcal{C}_{\D_{g,n}}/\D_{g,n}})} \cong \omega_{\mathcal{C}_{\PP}/\PP}.\]
One can check explicitly in coordinates that this coincides with the homomorphism $\sigma$ defined above.

Equipped with this description of $\sigma$, we are ready to prove the Lemma.  Let $\mathfrak{C}_{\omega} = C(\pi_*\omega_{\mathcal{C}_{\D_{g,n}}/\D_{g,n}})$ be the direct image cone (see Definition 2.1 of \cite{CL}), which parameterizes sections of $\omega$ on curves in $\D_{g,n}$.  This has a universal curve $\mathcal{C}_{\mathfrak{C}_{\omega}} = \mathcal{C}_{\D_{g,n}} \times_{\D_{g,n}} \mathfrak{C}_{\omega}$.  Let
\[\epsilon = \overline{W}(\mathfrak{s}_1, \ldots, \mathfrak{s}_N, \mathfrak{p}_1, \ldots, \mathfrak{p}_r) \in \Gamma(\mathcal{C}_{\PP}, \omega_{\mathcal{C}_{\PP}/\PP}),\]
which tautologically induces morphisms
\[\Phi_{\epsilon}: \PP \rightarrow \mathfrak{C}_{\omega}\]
and
\[\tilde{\Phi}_{\epsilon}: \mathcal{C}_{\PP} \rightarrow \mathcal{C}_{\mathfrak{C}_{\omega}}.\]
There are evaluation maps fitting into a commutative diagram of stacks of $\mathcal{C}_{\D_{g,n}}$ as follows:
\[\xymatrix{
\mathcal{C}_{\PP} \ar[r]^-{\mathfrak{e}}\ar[d]_{\tilde{\Phi_{\epsilon}}} & \mathcal{Z}\ar[d]^{h_1}\\
\mathcal{C}_{\mathfrak{C}_{\omega}}\ar[r]^-{\mathfrak{e}'} & \text{Vb}(\omega_{\mathcal{C}_{\D_{g,n}}/\D_{g,n}}).
}\]
Therefore, the following diagram of cotangent complexes is also commutative:
\begin{equation}
\label{tangentcx}
\xymatrix{
\pi_{\PP}^*\mathbb{T}_{\PP/\D_{g,n}}\ar@{=}[r]\ar[d] & \mathbb{T}_{\mathcal{C}_{\PP}/\mathcal{C}_{\D_{g,n}}}\ar[r]\ar[d] & \mathfrak{e}^*\Omega^{\vee}_{\mathcal{Z}/\mathcal{C}_{\D_{g,n}}}\ar[d]^{dh_1}\\
\pi_{\PP}^*\Phi_{\epsilon}^*\mathbb{T}_{\mathfrak{C}_{\omega}/\D_{g,n}}\ar@{=}[r] & \tilde{\Phi}^*_{\epsilon} \mathbb{T}_{\mathcal{C}_{\mathfrak{C}_{\omega}}/\mathcal{C}_{\D_{g,n}}} \ar[r] & \tilde{\Phi}^*_{\epsilon} \mathfrak{e}'^* \Omega^{\vee}_{\text{Vb}(\omega_{\mathcal{C}_{\D_{g,n}}/\D_{g,n}})/\mathcal{C}_{\D_{g,n}}}.
}
\end{equation}

Applying $R^1\pi_{\PP *}$ to the lower horizontal arrow yields the homomorphism
\[H^1(\Phi_{\epsilon}^*\mathbb{T}_{\mathfrak{C}_{\omega}/\D_{g,n}}) \rightarrow \Phi_{\epsilon}^*R^1\pi_{\mathfrak{C}_{\omega}*} \omega_{\mathcal{C}_{\mathfrak{C}_{\omega}}/\mathfrak{C}_{\omega}},\]
which is the pulback via $\Phi_{\epsilon}$ of the obstruction homomorphism in the perfect obstruction theory for $\mathfrak{C}_{\omega}$ over $\D_{g,n}$.  As observed in Equation 3.13 of \cite{CL}, this is trivial since $\mathcal{C}_{\mathfrak{C}_{\omega}} \rightarrow \mathcal{C}_{\D_{g,n}}$ is smooth.

Based on the new definition of $\sigma$ given above, it is clear that the composite whose vanishing we wish to show is obtained by applying $R^1\pi_{\PP *}$ to the composition from the upper left to the lower right of (\ref{tangentcx}).  Since we have now shown that the lower horizontal arrow becomes trivial, the proof is complete.
\end{proof}
\end{lemma}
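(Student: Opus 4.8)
The plan is to follow the argument of Chang--Li (Lemma~3.6 of \cite{CL}), adapted to the marked points and to the quasihomogeneous polynomial $\overline{W}$. Since the absolute obstruction sheaf $\mathcal{O}b_{\PP}$ is by definition the cokernel of (\ref{absolute}), and (\ref{absolute}) factors through the map $H^1(\mathbb{T}_{\PP/\D_{g,n}})\to\mathcal{O}b_{\PP/\D_{g,n}}$ appearing in the statement, it is more than enough to prove that $\sigma$ annihilates the image of this map; the lift $\overline{\sigma}$ is then immediate. So the whole content is the asserted vanishing of $\sigma$ on the image of $H^1(\mathbb{T}_{\PP/\D_{g,n}})$.

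First I would replace the explicit, coordinatewise formula for $\sigma$ by a functorial one. The polynomial $\overline{W}$, quasihomogeneous of degree $d$ in the $x_i$ (of weight $c_i$) and linear in the $p_j$, induces a bundle homomorphism $h_1\colon\mathcal{Z}=\text{Vb}\bigl(\bigoplus_{i}\mathscr{L}^{\otimes c_i}_{\D_{g,n}}\oplus\mathscr{P}^{\oplus r}_{\D_{g,n}}\bigr)\to\text{Vb}(\omega_{\mathcal{C}_{\D_{g,n}}/\D_{g,n}})$ over $\mathcal{C}_{\D_{g,n}}$; here the hypothesis $m_i\ge 1$ is what makes $\mathscr{L}^{\otimes d}\otimes\mathscr{P}$ land in $\omega$ rather than $\omega_{\log}$, exactly as in the construction preceding Lemma~\ref{degeneracy}. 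Differentiating $h_1$, pulling back along the evaluation map $\mathfrak{e}\colon\mathcal{C}_{\PP}\to\mathcal{Z}$, applying $R^{\bullet}\pi_{\PP*}$ and passing to $H^1$ yields a homomorphism $\mathcal{O}b_{\PP/\D_{g,n}}\to\O_{\PP}$; a computation in local trivializations — with the quasihomogeneous scaling producing the coefficients $\tfrac{c_i}{d}$ on the $x_i$-derivatives and the relative sign on the $p_j$-derivatives, and Serre duality making the output honestly $\O_{\PP}$-valued — identifies this homomorphism with $\sigma$.

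The heart of the proof is then a factorization through an unobstructed moduli problem. I would set $\epsilon=\overline{W}(\mathfrak{s}_1,\ldots,\mathfrak{s}_N,\mathfrak{p}_1,\ldots,\mathfrak{p}_r)\in\Gamma(\mathcal{C}_{\PP},\omega_{\mathcal{C}_{\PP}/\PP})$, which tautologically defines a morphism $\Phi_{\epsilon}\colon\PP\to\mathfrak{C}_{\omega}$ to the direct image cone of $\pi_{\D_{g,n}*}\omega_{\mathcal{C}_{\D_{g,n}}/\D_{g,n}}$ (Definition~2.1 of \cite{CL}) together with its lift $\tilde{\Phi}_{\epsilon}$ on universal curves. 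By construction $h_1\circ\mathfrak{e}$ agrees with $\mathfrak{e}'\circ\tilde{\Phi}_{\epsilon}$ over $\mathcal{C}_{\D_{g,n}}$, where $\mathfrak{e}'$ is the tautological evaluation map of $\mathfrak{C}_{\omega}$; hence the induced square of cotangent complexes commutes, and applying $R^1\pi_{\PP*}$ shows that the composite defining $\sigma$ (from the upper left to the lower right of that square) factors through $R^1\pi_{\PP*}$ of its bottom row. That bottom row, after pushforward, is the $\Phi_{\epsilon}$-pullback of the obstruction homomorphism of the perfect obstruction theory of $\mathfrak{C}_{\omega}$ over $\D_{g,n}$, which vanishes because $\mathcal{C}_{\mathfrak{C}_{\omega}}\to\mathcal{C}_{\D_{g,n}}$ is smooth (the direct image cone of a vector bundle is unobstructed over its base; cf.\ Equation~3.13 of \cite{CL}). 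Therefore the composite $H^1(\mathbb{T}_{\PP/\D_{g,n}})\to\mathcal{O}b_{\PP/\D_{g,n}}\xrightarrow{\sigma}\O_{\PP}$ is trivial, which is what we wanted.

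I expect the main obstacle to be the second step: setting up the canonical isomorphisms $\mathfrak{e}^{*}\Omega^{\vee}_{\mathcal{Z}/\mathcal{C}_{\D_{g,n}}}\cong\bigoplus_i\mathscr{L}^{\otimes c_i}\oplus\mathscr{P}^{\oplus r}$ and $\mathfrak{e}^{*}h_1^{*}\Omega^{\vee}_{\text{Vb}(\omega_{\mathcal{C}_{\D_{g,n}}/\D_{g,n}})}\cong\omega_{\mathcal{C}_{\PP}/\PP}$ precisely enough that the map coming from $dh_1$ can be matched, on the nose, with the coordinate formula for $\sigma$ — in particular checking that the Euler-type weights $\tfrac{c_i}{d}$ and the sign conventions come out right, and that the construction of $\sigma$ (hence $h_1$ landing in $\omega$) genuinely uses $m_i\ge 1$. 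Once $\sigma$ has been recast via $dh_1$, the commuting square and the smoothness of the direct image cone make the vanishing formal.
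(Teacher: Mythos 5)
Your proposal matches the paper's proof essentially step for step: the functorial reinterpretation of $\sigma$ via $dh_1$ pulled back along the evaluation map, the factorization through the direct image cone $\mathfrak{C}_{\omega}$ via $\Phi_{\epsilon}$, and the vanishing of the bottom row because $\mathcal{C}_{\mathfrak{C}_{\omega}} \rightarrow \mathcal{C}_{\D_{g,n}}$ is smooth. The argument is correct and takes the same route as the paper (both following Lemma 3.6 of Chang--Li).
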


Combining Lemmas \ref{degeneracy} and \ref{abscosection} with Theorem 1.1 of \cite{Li}, one finds that $\PP$ admits a localized virtual cycle $[\PP]^{\text{vir}}_{\text{loc}}$ supported on the degeneracy locus $\Mt_{g,(m_1, \ldots, m_n)}^d(\P^{r-1}, \beta) \subset \PP$ of $\sigma$.

\begin{definition}
The {\it virtual cycle} of the stack $\Mt_{g,(m_1, \ldots, m_n)}^d(\P^{r-1}, \beta)$ is defined as
\[ [\Mt_{g,(m_1, \ldots, m_n)}^d(\P^{r-1}, \beta)]^{\text{vir}} := [\PP]^{\text{vir}}_{\text{loc}}.\]
\end{definition}

\begin{remark}
\label{dichotomy}
To understand the motivation for this construction, and indeed for the LG/CY correspondence more generally, it is helpful to examine more closely the observation made above that $\PP$ embeds into the moduli space $\mathfrak{S}$ of sections associated to the diagram
\[\xymatrix{
\text{Vb}(\mathscr{L}_{\D_{g,n}}^{\otimes c_1} \oplus \cdots \mathscr{L}_{\D_{g,n}}^{\otimes c_N} \oplus \mathscr{P}_{\D_{g,n}}^{\oplus r}) \ar[r] & \mathcal{C}_{\mathcal{D}_{g,n}}\ar[d]\\
 & \mathcal{D}_{g,n}.
 }
\]
Specifically, $\mathcal{P}$ can be viewed as the substack of $\mathfrak{S}$ in which the $r$ sections of $\mathscr{P}$ parameterized by $\mathfrak{S}$ together define a stable map to $\P^{r-1}$.

If, on the other hand, we had considered the substack of $\mathfrak{S}$ in which the sections of $\mathscr{L}^{\otimes c_1}, \ldots, \mathscr{L}^{\otimes c_N}$ together define a stable map to $\P(c_1, \ldots, c_N)$, then the resulting moduli space would parameterize stable maps to this weighted projective space together with sections 
\[t_j \in H^0\left(f^*\O(-d) \otimes \omega \otimes \O\left(\sum(1-m_i)[x_i]\right)\right)\]
for $j = 1, \ldots, r$, assuming that the Gorenstein condition (\ref{Gorenstein}) is satisfied.  The cosection $\sigma$ is still defined on this new moduli space, and its degeneracy locus is the moduli space of stable maps to the complete intersection $X_{\overline{W}} \subset \P(c_1, \ldots, c_N)$, as Chang-Li prove in \cite{CL} for the case of the quintic.\footnote{In fact, much more is proved in \cite{CL}, since it is not at all obvious {\it a priori} that the localized virtual cycle obtained by the above method agrees with the usual virtual cycle on the moduli space of stable maps.}
\end{remark}

\begin{remark}
Because we have used the cosection construction as opposed to the Witten top Chern class construction of \cite{ChiodoW} and \cite{PV}, it is not at all obvious that our correlators agree in the case of the quintic with those defined in \cite{CR}.  However, the equivalence of all existing constructions of the FJRW virtual cycle is proved in \cite{CLL}.
\end{remark}

\subsubsection{Virtual dimension}

Let $\xi = (C, f, L, \varphi, s_1, \ldots, s_N) \in \mathcal{P}$.  The virtual dimension of $\mathcal{P}/\mathcal{D}_{g,n}$ at $\xi$ is
\[h^0(L^{\otimes c_1} \oplus \cdots \oplus L^{\otimes c_N} \oplus f^*\O(1)^{\oplus r}) - h^1(L^{\otimes c_1} \oplus \cdots \oplus L^{\otimes c_N} \oplus f^*\O(1)^{\oplus r}),\]
and an easy Riemann-Roch computation using (\ref{reification}) shows that this equals
\[(N-r)(1-g) + rn - \sum_{i=1}^n \sum_{j=1}^N \frac{c_jm_{i,j}}{d}.\]
Since
\begin{align*}
\text{vdim}(\mathcal{D}_{g,n}) &= \text{vdim}(\mathcal{D}_{g,n}/\M_{g,n}) + \text{vdim}(\M_{g,n})\\
&= (h^0(\O_C) - 1) + 3g-3+n\\
&=4g - 4 +n,
\end{align*}
we find that the virtual dimension of $\mathcal{P}/\M_{g,n}$ at $\xi$ equals
\[(N-r-4)(1-g) + (r+1)n - \sum_{i=1}^n\sum_{j=1}^N \frac{c_jm_{i,j}}{d} = \text{vdim}(\M_{g,n}(\P^{r-1}, \beta)) + \sum_{j=1}^N \chi(L^{\otimes c_j}).\]

\subsubsection{Virtual cycle in genus zero}

In genus zero, the definition of the virtual cycle simplifies substantially, under the Gorenstein condition
\begin{equation}
\label{Gorenstein}
c_j|d \; \; \text{for all } j.
\end{equation}

Indeed, if this hypothesis is satisfied and $L$ is a line bundle satisfying the requirements of $\mathcal{P}$, then the bundles  $L^{\otimes c_j}$ have no global sections.  To see this, one simply must compute the degree of such a line bundle using the fact that on each irreducible component $Z$ of the source curve $C$,
\[L^{\otimes c_j}|_Z^{\otimes d/c_j} \cong \omega_{\log} \otimes f^*\O(-1) \otimes \O\left(-\sum_{i=1}^n m_{i,j} [x_j]\right).\]
Here, the $x_j$ are the special points on $Z$ and the $m_{i,j}$ are the multiplicities of $L^{\otimes c_j}$ at those special points, and we are once again using that the multiplicities at all marked points are nonzero.  This equation implies that the degree of $L^{\otimes c_j}|_Z$ is negative, so if $C$ is itself irreducible, it follows that $L^{\otimes c_j}$ has no global sections.  If $C$ is reducible, the claim still follows by an easy inductive argument using the fact that $\text{deg}(L^{\otimes c_j}|_Z) < k-1$, where $k$ is the number of points at which $Z$ meets the rest of $C$.

Because of this observation, $\mathcal{P} = \Mt_{0,(m_1, \ldots, m_n)}^d (\P^{r-1}, \beta)$.  In this situation, the cosection localized virtual cycle is the same as the ordinary virtual cycle of $\Mt_{0,(m_1, \ldots, m_n)}^d(\P^{r-1}, \beta)$.  Furthermore, abbreviating $\Mt = \Mt_{0,(m_1, \ldots, m_n)}^d(\P^{r-1}, \beta)$ and $Y = \M_{0,n}(\P^{r-1}, \beta)$, the smoothness of the moduli space in this case implies that
\[[\Mt]^{\vir} = c_{top}(\mathcal{O}b_{\Mt/Y}) \cap [\M_{0,n}(\P^{r-1}, \beta)],\]
where $[\M_{0,n}(\P^{r-1}, \beta)]$ denotes the pullback of the fundamental class on $Y$ to $\Mt$ under the map that forgets $L$.  Using the exact sequence
\[T_{\Mt/\mathcal{D}_{g,n}} \xrightarrow{\sim} T_{Y/\mathcal{D}_{g,n}} \rightarrow \O b_{\Mt/Y} \rightarrow \O b_{\Mt/\mathcal{D}_{g,n}} \rightarrow 0,\]
one finds that $\O b_{\Mt/Y} = R^1\pi_*(\mathcal{T}^{\otimes c_1} \oplus \cdots \oplus \mathcal{T}^{\otimes c_N})$, in which $\mathcal{T}$ is the universal line bundle on $\Mt$.  Thus, we obtain the formula
\[[\Mt]^{\vir} = c_{top}(R^1\pi_*(\mathcal{T}^{\otimes c_1} \oplus \cdots \oplus \mathcal{T}^{\otimes c_N}))\cap [\M_{0,n}(\P^{r-1}, \beta)]\]
for the virtual cycle in genus zero.

\subsection{Correlators}

In analogy to Gromov-Witten theory, correlators will be defined as integrals over the moduli space against the virtual cycle.

\subsubsection{Evaluation maps and psi classes}

The classes that we integrate will come from two places.  First, there are evaluation maps
\[\ev_i: \Mt_{g,n}^d(\P^{r-1}, \beta) \rightarrow \P^{r-1} \hspace{1.5cm} i=1, \ldots, n,\]
given by $(C, f, L, \varphi) \mapsto f(x_i)$, where $x_i \in C$ is the $i$th marked point.  Therefore, we can pull back cohomology classes on $\P^{r-1}$ to obtain classes on the hybrid moduli space.

Second, there are classes
\[\psi_i \in H^2(\Mt_{g,n}^d(\P^{r-1},\beta))\]
for $i=1, \ldots, n$, defined in the same way as in Gromov-Witten theory.  Namely, $\psi_i$ is the first Chern class of the (orbifold) line bundle whose fiber at a point of the moduli space is the cotangent line to the orbifold curve at the $i$th marked point.  Note that this differs from the definition of $\psi_i$ used in \cite{CR}, in which the cotangent line was always taken to the underlying curve; we will denote these ``coarse" psi classes by $\overline{\psi}_i$.  The two are related by
\[\overline{\psi}_i = d\psi_i.\]

\subsubsection{Definition of correlators in the narrow case}

We will only define correlators when all insertions are drawn from the narrow sectors of the state space.

\begin{definition}
\label{correlatorsdef}
Choose $\phi_1, \ldots, \phi_n\in \mathcal{H}_{hyb}(W_1, \ldots, W_r)$ from the narrow sectors and $a_1, \ldots, a_n \geq 0$.  As explained in Section \ref{broadnarrow}, each $\phi_i$ can be viewed as an element of $H^*(\P^{r-1})$.  Each also defines an element $\gamma_i \in J$ indicating the twisted sector from which it is drawn, and we let $m_i \in \{0, 1, \ldots, d-1\}$ be such that
\[\gamma_i = (e^{2\pi i \frac{m_ic_1}{\overline{d}}}, \ldots, e^{2\pi i \frac{m_ic_N}{\overline{d}}}, 1, \ldots, 1).\]
Define the associated {\it hybrid model correlator} $\langle \tau_{a_1}(\phi_1) \cdots \tau_{a_n}(\phi_n) \rangle^{hyb}_{g,n,\beta}$ to be
\begin{equation}
\label{correlators}
\frac{d(-1)^D}{\deg(\rho)}\displaystyle\int_{[\Mt_{g,(m_1, \ldots, m_n)}^d(\P^{r-1},\beta)]^{vir}} \ev_1^*(\phi_1)\psi_1^{a_1} \cdots \ev_n^*(\phi_n)\psi_n^{a_n},
\end{equation}
where
\[D = \sum_{i=1}^N\bigg(h^1(L^{\otimes c_i}) - h^0(L^{\otimes c_i})\bigg)\]
and $\deg(\rho)$ denotes the degree of the map
\[\rho: \Mt_{g,(m_1, \ldots, m_n)}^d(\P^{r-1},\beta) \rightarrow \M_{g,n}(\P^{r-1}, \beta)\]
given by forgetting the $\overline{W}$-structure and passing to the coarse underlying source curve.
\end{definition}

The strange-looking sign choice in this definition is a matter of convenience, following equation (50) of \cite{FJR}.  In genus zero under the Gorenstein condition (\ref{Gorenstein}), $D$ is precisely the rank of the obstruction bundle and $\deg(\rho) = \frac{1}{d}$ whenever the substratum over which we are integrating is nonempty (see Equation (26) of \cite{FJR}).  Thus, the above is equivalent in such cases to
\[d^2 \int_{[\Mt^d_{0,\mathbf{m}}(\P^{r-1}, \beta)]^{\vir}} \ev_1^*(\phi_1)\psi_1^{a_1} \cdots \ev_n^*(\phi_n)\psi_n^{a_n} c_{top}((R^1\pi_*(\mathcal{T}^{\otimes c_1} \cdots \mathcal{T}^{\otimes c_N}))^{\vee}),\]
where $\mathbf{m} = (m_1, \ldots, m_n)$.

\subsubsection{Broad insertions}

The easiest way to extend the above theory to allow for broad insertions is to set a correlator to zero if any of its insertions comes from a broad sector.  In order to ensure that the resulting theory satisfies the decomposition property required of cohomological field theories, though, it is necessary to verify a Ramond vanishing property.  This holds whenever the Gorenstein condition (\ref{Gorenstein}) is satisfied.\footnote{The argument below was substantially simplified by a suggestion of A. Chiodo.}

\begin{proposition}[Ramond vanishing]
Suppose that for all $i$ and $j$, $c_j|d$ and $m_{i,j} \neq 0$.  Let $D \subset \Mt^d_{0, \mathbf{m}}(\P^{r-1}, \beta)$ be a boundary stratum whose general point is a source curve with a single, broad node.  Then
\begin{equation}
\label{ramondvanishing}
\int_D \ev_1^*(\phi_1)\psi_1^{a_1} \cdots \ev_n^*(\phi_n)\psi_n^{a_n} c_{top}((R^1\pi_*(\mathcal{T}^{\otimes c_1} \oplus \cdots \oplus \mathcal{T}^{\otimes c_N}))^{\vee}) = 0
\end{equation}
for any $a_1, \ldots, a_n \in \Z^{\geq 0}$ and any $\phi_1, \ldots, \phi_n \in H^*(\P^{r-1})$.
\begin{proof}
Let $C = C_1 \sqcup C_2$ be the decomposition of a fiber of $\pi$ in $D$ into irreducible components, and let $n$ be the node at which the components meet.  Consider the normalization exact sequence
\[0 \rightarrow \O_C \rightarrow \O_{C_1} \oplus \O_{C_2} \rightarrow \O_n \rightarrow 0.\]
Tensor with $L^{\otimes c_1} \oplus \cdots \oplus L^{\otimes c_N}$ and take the associated long exact sequence in cohomology to obtain the following equality in $K$-theory:
\[R^1\pi_*\left(\oplus \mathcal{T}^{\otimes c_j}\right) = R^0\pi_*(\oplus \mathcal{T}^{\otimes c_j}|_n) + R^1\pi_*(\oplus \mathcal{T}^{\otimes c_j}|_{C_1}) + R^1\pi_*(\oplus \mathcal{T}^{\otimes c_j}|_{C_2}).\]
Here, we use that $H^0(L^{\otimes c_j}|_{C_i}) = 0$ for all $i$ and $j$, as an easy degree computation shows.

The key point is that, since $n$ is broad, orbifold sections of $L^{\otimes c_j}$ over $n$ are the same as ordinary sections over the coarse underlying curve.  More precisely, let $N: D \rightarrow \mathscr{C}$ be the section of the universal curve defined by the node $n$.  Then
\[c_{top}(R^0\pi_*(\oplus \mathcal{T}^{\otimes c_j}|_n)) = N^*c_{top}(\oplus \mathcal{T}^{\otimes c_j}),\]
and since $(\mathcal{T}^{\otimes c_j})^{\otimes d} \cong \omega_{\log}^{\otimes c_j} \otimes f^*\O(-c_j)$ for each $j$, the above equals
\[\prod_{j=1}^N \frac{1}{d}\left(c_{top}(\omega_{\log}^{\otimes c_j}|_n) + c_{top}(N^*f^*\O(-c_j))\right).\]
In this expression, $c_{top}(\omega_{\log}^{\otimes c_j}|_n) = 0$, since the restriction of $\omega_{\log}$ to the locus of nodes is trivial.  Furthermore, $f \circ N = \ev_n$, so we can rewrite the above as
\[\prod_{j=1}^N \frac{1}{d} ev_n^*c_{top}(\O(-c_j)) = \frac{1}{d^N} \ev_n^*c_{top}(\O(-c_j)^{\oplus N}),\]
which is zero because $\O(-c_j)^{\oplus N}$ is an $N$-dimensional bundle on an $r$-dimensional space and $N > r$.

It follows that one of the summands in the expression for $R^1\pi_*(\oplus \mathcal{T}^{\otimes c_j})$ has trivial top Chern class, so the integral in (\ref{ramondvanishing}) vanishes.
\end{proof}
\end{proposition}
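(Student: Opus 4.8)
The plan is to degenerate onto the broad node $n$ of the general point of $D$ and to reduce the relevant Euler class to a class pulled back from $\P^{r-1}$ via $\ev_n$, which then vanishes for dimension reasons. First I would write a general fiber of $\pi$ over $D$ as $C = C_1 \cup C_2$ meeting at $n$, and use the normalization sequence
\[0 \to \O_C \to \O_{C_1} \oplus \O_{C_2} \to \O_n \to 0.\]
Tensoring by $\mathcal{T}^{\otimes c_1} \oplus \cdots \oplus \mathcal{T}^{\otimes c_N}$ and passing to the long exact sequence of $\pi$-pushforwards gives, in $K$-theory over $D$, an equality
\[R^1\pi_*\Big(\bigoplus_{j} \mathcal{T}^{\otimes c_j}\Big) = R^0\pi_*\Big(\bigoplus_{j} \mathcal{T}^{\otimes c_j}\big|_n\Big) + R^1\pi_*\Big(\bigoplus_{j} \mathcal{T}^{\otimes c_j}\big|_{C_1}\Big) + R^1\pi_*\Big(\bigoplus_{j} \mathcal{T}^{\otimes c_j}\big|_{C_2}\Big),\]
where I use, exactly as in the genus-zero Gorenstein discussion above, that each $L^{\otimes c_j}|_{C_i}$ has negative degree and hence no sections, so that no $R^0$ term from the components contributes. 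The three $K$-theory classes on the right are then effective (represented by actual bundles) with ranks summing to that of the left-hand side, so the total Chern class is multiplicative and $c_{top}$ of the left side factors as the product of the three top Chern classes; passing to the dual, as in $(\ref{ramondvanishing})$, only introduces signs. It therefore suffices to exhibit one factor with vanishing top Chern class, and I will take the first.

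To handle the first factor I would use that $n$ is \emph{broad}: all of $L^{\otimes c_1}, \ldots, L^{\otimes c_N}$ have trivial multiplicity there, so orbifold sections over the gerbe at $n$ coincide with ordinary sections of the coarsening over the coarse node. In families this should give $R^0\pi_*\big(\bigoplus_{j} \mathcal{T}^{\otimes c_j}\big|_n\big) \cong \bigoplus_{j=1}^N N^*\mathcal{T}^{\otimes c_j}$ as a rank-$N$ bundle on $D$, where $N\colon D \to \mathscr{C}$ is the section of the universal curve cutting out the node. Now feed in the defining isomorphism $(\mathcal{T}^{\otimes c_j})^{\otimes d} \cong \omega_{\log}^{\otimes c_j} \otimes f^*\O(-c_j)$: pulled back along $N$ the $\omega_{\log}$-factor is trivial, since $\omega_{\log}$ restricts trivially to the nodal locus, and $f \circ N = \ev_n$; hence up to the harmless scalar $d^{-N}$ the top Chern class of this factor equals $\ev_n^*\, c_{top}\big(\O(-c_1) \oplus \cdots \oplus \O(-c_N)\big)$.

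Finally, since $N$ exceeds the complex dimension $r-1$ of $\P^{r-1}$ (indeed $N > r$ in the cases at hand), the rank-$N$ bundle $\O(-c_1) \oplus \cdots \oplus \O(-c_N)$ on $\P^{r-1}$ has top Chern class in $H^{2N}(\P^{r-1}) = 0$. Thus this factor vanishes on $D$, the product $c_{top}\big((R^1\pi_*(\bigoplus_{j} \mathcal{T}^{\otimes c_j}))^\vee\big)$ restricts to zero on $D$, and the integral in $(\ref{ramondvanishing})$ vanishes for all $a_1, \ldots, a_n \geq 0$, all $\phi_1, \ldots, \phi_n \in H^*(\P^{r-1})$, and all $\beta$. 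The step I expect to be the main obstacle is making the identification $R^0\pi_*\big(\bigoplus_{j} \mathcal{T}^{\otimes c_j}\big|_n\big) \cong \bigoplus_{j=1}^N N^*\mathcal{T}^{\otimes c_j}$ rigorous: one must check that broadness genuinely forces the stabilizer at the node to act trivially on each fiber of $\mathcal{T}^{\otimes c_j}$, so that pushforward from the node gerbe recovers exactly the pullback bundle with no residual twist, and that this fits with the $K$-theoretic bookkeeping uniformly over $D$.
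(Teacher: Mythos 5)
Your proposal is correct and follows essentially the same route as the paper's own proof: the normalization sequence at the broad node, the $K$-theoretic splitting of $R^1\pi_*(\oplus\mathcal{T}^{\otimes c_j})$ using the vanishing of $H^0$ on each component, the identification of the node contribution with $N^*(\oplus\mathcal{T}^{\otimes c_j})$ via broadness, and the final vanishing of $\ev_n^*c_{top}(\O(-c_1)\oplus\cdots\oplus\O(-c_N))$ for dimension reasons. The step you flag as the main obstacle is exactly the "key point" the paper isolates, and it is handled there in the same way.
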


\begin{remark}
This definition of the broad correlators seems initially ad hoc.  However, analogously to Proposition 2.4.5 of \cite{CR}, it is possible to unify the broad and narrow cases in genus $0$ into a single geometric definition by slightly modifying the moduli space.
\end{remark}

\subsubsection{Multiplicity conditions}

Certain tuples of multiplicities correspond to empty components of the moduli space, so the resulting correlators clearly vanish. Indeed, (\ref{reification}) and the subsequent discussion imply that if $m_1, \ldots, m_n$ are as in Definition \ref{correlatorsdef}, then $\langle \tau_1(\phi_1) \cdots \tau_n(\phi_n)\rangle^{hyb}_{g,n,\beta}$ vanishes unless
\begin{equation}
\label{selection}
2g - 2 + n - \beta - \sum_{i=1}^n m_i \equiv 0 \mod d.
\end{equation}
This selection rule will be useful later.

\section{Proof of the correspondence in genus zero}
\label{correspondence}

In both Gromov-Witten theory and the hybrid model, the genus-zero theory can be realized as a Lagrangian cone in a certain symplectic vector space.  Because the genus-zero hybrid invariants are described via a top Chern class, they fit into the framework of twisted invariants described in \cite{Coates}, and Givental's quantization formalism provides a tool for realizing them in terms of the corresponding {\it untwisted} theory, which is essentially the Gromov-Witten theory of projective space.  The following section describes this process in detail and uses it to prove the LG/CY correspondence in the two cases of interest.

\subsection{Givental's formalism}

For the sake of expository clarity, we will describe the setup in the case of the cubic singularities first, commenting briefly on the requisite modifications for the quadric case at the end.

\subsubsection{The symplectic vector spaces}

It is convenient to modify the state space slightly, replacing the broad sector with another copy of $H^*(\P^1)$ to obtain
\[H_{hyb} = H^*_{0}(\P^1) \oplus H_{1}^*(\P^1) \oplus H_{2}^*(\P^1).\]
The subscripts denote the multiplicities to which the summands correspond.  This modification does not affect the correlators, since they vanish when any insertion is broad.  We will write $\phi^{(h)}$ for an element $\phi \in H^*(\P^1)$ coming from the summand $H^*_h(\P^1)$.

This vector space is equipped with a nondegenerate inner product (or {\it Poincar\'e pairing}), denoted $(\; , \;)_{hyb}$ and defined as
\[(\Theta_1, \Theta_2)_{hyb} = \langle \tau_0(\Theta_1)\; \tau_0(\Theta_2)\; 1^{(1)} \rangle^{hyb}_{0,3,0}.\]  The symplectic vector space we will consider is
\[\mathcal{V}_{hyb} = H_{hyb} \otimes \C((z^{-1})),\]
with the symplectic form $\Omega_{hyb}$ given by
\[\Omega_{hyb}(f,g) = \text{Res}_{z=0}\bigg((f(-z), g(z))_{hyb} \bigg).\]
This induces a polarization $\mathcal{V}_{hyb} = \mathcal{V}_{hyb}^+ \oplus \mathcal{V}_{hyb}^-$, where $\mathcal{V}_{hyb}^+ = H_{hyb} \otimes \C[z]$ and $\mathcal{V}_{hyb}^- = z^{-1}H_{hyb} \otimes \C[[z^{-1}]]$.  Thus, we can identify $\mathcal{V}_{hyb}$ as a symplectic manifold with the cotangent bundle to $\mathcal{V}_{hyb}^+$.  An element of $\mathcal{V}_{hyb}$ can be expressed in Darboux coordinates as $\sum_{k \geq 0} q_k^{\alpha} \phi_{\alpha}z^k + \sum_{\ell \geq 0} p_{\ell, \beta} \phi^{\beta} (-z)^{-\ell -1}$, where $\{\phi_{\alpha}\}$ is a basis for $H_{hyb}$.

Analogously, there is a symplectic vector space on the Gromov-Witten side \cite{CR} \cite{Coates}.  The restriction to narrow states is mirrored in that setting by the restriction to cohomology classes pulled back from the ambient projective space, which are the only ones that give nonzero correlators.  Let $H_{GW}$ denote the vector space of such classes:
\[H_{GW} = H^{even}(X_{3,3}) = \bigoplus_{h=0}^3 [H^h] \C,\]
where $H$ is the restriction to $X_{3,3}$ of the hyperplane class on the ambient projective space.\footnote{Of course, to be completely symmetric, we might want to add an additional two-dimensional summand to $H_{GW}$, as we did for $H_{hyb}$, and define the correlators to vanish if any insertion comes from this summand.  Since we will not be doing any computations on the Gromov-Witten side, we will ignore this asymmetry and leave $H_{GW}$ as above.}  The symplectic vector space $\mathcal{V}_{GW}$ on the Gromov-Witten side is defined as above, and the usual Poincar\'e pairing on $H_{GW}$ induces a symplectic form in the same way.

\subsubsection{The potentials}

Defining the correlators in the hybrid theory as above, the generating function for the genus-$g$ invariants is
\[\mathcal{F}^g_{hyb}(\t, z) = \sum_{n,d} \frac{Q^d}{n!} \langle \t(\psibar), \ldots, \t(\psibar) \rangle^{hyb}_{g,n,d},\]
where $\t = t_0 + t_1z + t_2z^2 + \cdots \in H_{hyb}[[z]]$.  These generating functions fit together into a total-genus descendent potential
\[\mathcal{D}_{hyb} = \exp\left( \sum_{g \geq 0} \hbar^{g-1} \mathcal{F}^g_{hyb}\right).\]

In the same way, one can define a generating function for the genus-$g$ Gromov-Witten invariants of the corresponding complete intersection,
\[\mathcal{F}^g_{GW}(\t,z) = \sum_{n,d} \frac{Q^d}{n!} \langle \t(\psi), \ldots, \t(\psi) \rangle_{g,n,d}^{GW},\]
where $\t = t_0 + t_1z + t_2z^2 + \cdots \in H_{GW}[[z]]$.  These, too, fit together into a total-genus descendent potential $\mathcal{D}_{GW}$.

\subsubsection{The Lagrangian cones}

In the Gromov-Witten setting, the dilaton shift
\[q_k^{\alpha} = t_k^{\alpha} - 1\cdot z\]
makes $\mathcal{F}^0_{GW}$ into a power series in the Darboux coordinates $q_k^{\alpha}$, where $1$ denotes the constant function $1$ in $H^0$.  In this way, the genus-zero Gromov-Witten theory is encoded by a Lagrangian cone
\[\mathcal{L}_{GW} = \{(\mathbf{q}, \mathbf{p}) \; | \; \mathbf{p} = d_{\mathbf{q}} \mathcal{F}^0_{GW}\} \subset \mathcal{V}_{GW},\]
where we use the Darboux coordinates $(\mathbf{q}, \mathbf{p})$ defined above to identify $\mathcal{V}_{GW}$ with the cotangent bundle to its Lagrangian subspace $\mathcal{V}_{GW}^+$.  As proved in \cite{Coates}, $\mathcal{L}_{GW}$ is a Lagrangian cone whose tangent spaces satisfy the geometric condition
\begin{equation}
\label{tangent}
zT_f \mathcal{L}_{GW} = \mathcal{L}_{GW} \cap T_f \mathcal{L}_{GW}
\end{equation}
at any point.

The same story holds in the hybrid model, but it is important to note that in the dilaton shift
\[q_k^{\alpha} = t_k^{\alpha} - 1^{(1)} \cdot z,\]
the {\it unit is the constant function $1$ from the summand of the state space corresponding to multiplicity-$1$ insertions}.  Under this dilaton shift, we again have that $\mathcal{F}^0_{hyb}$ is a function of $\mathbf{q} \in \mathcal{V}_{hyb}^+$ and hence we can define
\[\mathcal{L}_{hyb} = \{(\mathbf{q}, \mathbf{p}) \; | \; \mathbf{p} = d_{\mathbf{q}} \mathcal{F}^0_{hyb} \}\subset \mathcal{V}_{hyb}.\]
Since the hybrid theory also satisfies the string equation, dilaton equation, and topological recursion relations, the same geometric condition holds for this cone as for the Lagrangian cone of Gromov-Witten theory.

On either the Gromov-Witten or the hybrid side, we define the $J$-function
\[J_{hyb/GW}(z,\t) = 1z +\t + \sum_{n,d} \frac{1}{n!}\left\langle \t, \ldots, \t, \frac{\phi_{\alpha}}{z-\psi}\right\rangle_{0,n+1,d}^{hyb/GW}\phi^{\alpha},\]
where $\phi_{\alpha}$ ranges over a basis for $H_{hyb/GW}$ with dual basis $\phi^{\alpha}$.  In other words, $J(-z,t)$ is the intersection of the Lagrangian cone with the slice $\{-1z + \t + \mathcal{V}^-\} \subset \mathcal{V}_{hyb/GW}$.  It is a well-known consequence of (\ref{tangent}) that this slice determines the rest of the Lagrangian cone, so the $J$-function specifies the entire genus-zero theory.

\subsubsection{Twisted theory}

The strategy for determining $J_{hyb}$ is to introduce parameters that will interpolate between the hybrid invariants and the ordinary Gromov-Witten invariants of projective space.  One can always define a multiplicative characteristic class $K_0(X) \rightarrow H^*(X; \C)$ by
\[x \mapsto \exp\left(\sum_{k \geq 0} s_k \ch_k(x)\right).\]
When $s_k = 0$ for all $k \geq 0$, the result is a constant map sending every $K$-class to the fundamental class, while if we set
\begin{equation}
\label{sk}
s_k = \begin{cases} -6\ln(\lambda) & k=0\\ & \\ \displaystyle\frac{6(k-1)!}{\lambda^k} & k > 0,\end{cases}
\end{equation}
then the resulting class satisfies
\[\exp\left(\sum_{k \geq 0} s_k \ch_k(-[V])\right) = e_{\C^*}(V^{\vee})^6\]
for any vector bundle $V$ equipped with the natural $\C^*$ action scaling the fibers.  (The reason for passing to equivariant cohomology is to ensure that the above is invertible.)  We will typically denote
\[c(x) = \exp\left(\sum_{k \geq 0} s_k \ch_k(x)\right)\]
when the parameters $s_k$ are taking unspecified values.

Extend the hybrid model state space to
\[H_{tw} = \big (H^*_0(\P^1) \oplus H^*_1(\P^1)\oplus H^*_2(\P^1)\big) \otimes R,\]
where
\[R = \C[\lambda] [[s_0, s_1, \ldots ]].\]
Then, for any $\phi_1, \ldots, \phi_n \in H_{tw}$ and $a_1, \ldots, a_n \in \Z^{\geq 0}$, define the corresponding {\it twisted hybrid invariant} $\langle \tau_{a_1}(\phi_1), \ldots, \tau_{a_n}(\phi_n)\rangle^{tw}_{g,n,d} $ by
\[\frac{3}{\text{deg}(\rho)} \int_{\rho^*[\M_{g,n}(\P^1,d)]^{\vir}} \ev_1^*(\phi_1)\psi_1^{a_1} \cdots \ev_n^*(\phi_n)\psi_n^{a_n} \; c(R\pi_*\mathcal{T}),\]
where $\mathcal{T}$ denotes the universal line bundle on the universal curve over $\Mt^3_{g,n}(\P^1, d)$, $\rho: \Mt^3_{g, \mathbf{m}}(\P^1,d)\rightarrow \M_{g,n}(\P^1,d)$ is as in Section \ref{correlatorsdef}.  We will sometimes adopt the notation of \cite{Coates} and write the above as
\[\langle \tau_{a_1}(\phi_1), \ldots, \tau_{a_n}(\phi_n); c(R\pi_*\mathcal{T})\rangle_{g,n,d},\]
or more generally, write a cohomology class on the universal curve after a semicolon to indicate that it is part of the integrand but is neither a $\psi$ class nor pulled back from the target space.

Via these invariants, $H_{tw}$ is equipped with a pairing extending the pairing on $H_{hyb}$:
\[(\Theta_1, \Theta_2)_{tw} = \langle \Theta_1, \Theta_2, 1^{(1)}\rangle^{tw}_{0,3,0}.\]
We can then set $\mathcal{V}_{tw} = H_{tw} \otimes \C((z^{-1}))$, and this is a symplectic vector space under the symplectic form induced by the twisted pairing.  The definitions of the genus-$g$ potential, total descendent potential, and Lagrangian cone all generalize directly, and we thus obtain the twisted Lagrangian cone $\mathcal{L}_{tw} \subset \mathcal{V}_{tw}$.  It is no longer obvious that this is indeed a Lagrangian cone, but this will follow from Proposition \ref{symplectic}.

\subsubsection{Untwisted theory}

Let $\mathcal{V}_{un}$ denote the symplectic vector space obtained by setting $s_k = 0$ for all $k \geq 0$, and similarly $H_{un}$ and $\mathcal{L}_{un}$.  Note that $\mathcal{L}_{un}$ encodes the correlators $\langle \tau_{a_1}(\phi_1), \ldots, \tau_{a_n}(\phi_n)\rangle^{un}_{0,n,d}$, which are given by
\[\frac{3}{\text{deg}(\rho)} \int_{\rho^*[\M_{0,n}(\P^1, d)]^{\vir}} \ev_1^*(\phi_1) \psi_1^{a_1} \cdots \ev_n^*(\phi_n) \psi_n^{a_n}.\]
When the selection rule (\ref{selection}) is satisfied so that the component of the hybrid moduli space over which we are integrating is nonempty, these are simply three times the Gromov-Witten invariants of $\P^1$.  In particular, the untwisted $J$-function is known explicitly.

We will use the untwisted Lagrangian cone to determine the cone $\mathcal{L}_{hyb}$.  This can be viewed as a two-step procedure.  First, $\mathcal{L}_{hyb}$ can be obtained from $\mathcal{L}_{tw}$ by taking a limit $\lambda \rightarrow 0$ and setting the parameters $s_k$ to the values in (\ref{sk}), so that
\[c(R\pi_*\mathcal{T}) = c(-R^1\pi_*\mathcal{T}) = c_{top}((R^1\pi_*\mathcal{T})^{\vee})^6,\]
which is what appears in the hybrid model correlators.  Then, Proposition \ref{symplectic} demonstrates that $\mathcal{L}_{tw}$ can in turn be recovered from $\mathcal{L}_{un}$.

\subsubsection{The quadric singularities}

All of the above is defined analogously in the other example of interest.  In that case,
\[H_{hyb} = H_0^*(\P^3) \oplus H^*_1(\P^3).\]
The hybrid Poincar\'e pairing is defined by the exact same formula, and we obtain a symplectic vector space $\mathcal{V}_{hyb} = H_{hyb} \otimes \C((z^{-1}))$.  The symplectic vector space on the Gromov-Witten side is now $\mathcal{V}_{GW} = H_{GW} \otimes \C((z^{-1}))$, where
\[H_{GW} = H^{even}(X_{2,2,2,2}) = \bigoplus_{h=0}^3 [H^h]\C\]
and $H$ is the restriction to $X_{2,2,2,2}$ of the hyperplane class on $\P^7$.  The genus-$g$ generating functions and total-genus descendent potentials on both the hybrid and the Gromov-Witten side are defined just as before, and again the genus-$0$ theory on each side is encoded by a Lagrangian cone which is determined by the slice cut out by a $J$-function.

A twisted theory is again introduced, though now the values of $s_k$ that give the hybrid theory are
\begin{equation}
\label{sk2}
s_k = \begin{cases} -8\ln(\lambda) & k=0\\ & \\ \displaystyle\frac{8(k-1)!}{\lambda^k} & k > 0,\end{cases}
\end{equation}
since the virtual class in genus $0$ is $c_{top}((R^1\pi_*\mathcal{T})^{\vee})^8$ in this case.  The state space is extended to
\[H_{tw} = (H^*_0(\P^3) \oplus H^*_1(\P^3)) \otimes R\]
for $R = \C[\lambda][[s_0, s_1, \ldots]]$, and twisted hybrid invariants are defined as
\[\frac{2}{\text{deg}(\rho)} \int_{\rho^*[\M_{g,n}(\P^3, d)]^{\vir}} \ev_1^*(\phi_1)\psi_1^{a_1} \cdots \ev_n^*(\phi_n)\psi_n^{a_n} \; c(R\pi_*\mathcal{T}),\]
for $\phi_1, \ldots, \phi_n \in H_{tw}$ and $a_1, \ldots, a_n \in \Z^{\geq 0}$.  These permit the definition of the twisted Poincar\'e pairing and hence the twisted symplectic vector space.  When $\lambda \rightarrow 0$ and the parameters $s_k$ are set to the values in (\ref{sk2}), we obtain the hybrid theory for the quadric singularity, while the untwisted theory (when $s_k =0$ for all $k$) gives two times the Gromov-Witten theory of $\P^3$.

\subsection{Lagrangian cone for the Landau-Ginzburg theory}

Recall that the Bernoulli polynomials $B_n(x)$ are defined by the generating function
\[\sum_{n=0}^{\infty} B_n(x) \frac{z^n}{n!} = \frac{ze^{zx}}{e^z-1}.\]

\begin{proposition}
\label{symplectic}
\begin{enumerate}[(a)]
\item Let $\mathcal{V}_{tw}$ denote the symplectic vector space associated to the cubic singularities $W_1(x_1, \ldots, x_6), \ldots, W_3(x_1, \ldots, x_6)$, and let $\Delta: \mathcal{V}_{un} \rightarrow \mathcal{V}_{tw}$ be the symplectic transformation
\[\Delta = \bigoplus_{\ell=0}^2 \exp\left( \sum_{\substack{ k \geq 0\\ m \geq 0}} s_k \frac{B_m(\frac{\ell}{3})}{m!} \exp(-\sfrac{H^{(\ell)}}{3})_{k+1-m} z^{m-1}\right).\]
Then $\mathcal{L}_{tw} = \Delta(\mathcal{L}_{un})$.
\item Let $\mathcal{V}_{tw}$ denote the symplectic vector space associated to the quadric singularities $V_1(x_1, \ldots, x_8),$ $\ldots, V_4(x_1, \ldots, x_8)$, and let $\Delta: \mathcal{V}_{un} \rightarrow \mathcal{V}_{tw}$ be the symplectic transformation
\[\Delta = \bigoplus_{\ell=0}^1 \exp\left( \sum_{\substack{ k \geq 0\\ m \geq 0}} s_k \frac{B_m(\frac{\ell}{2})}{m!} \exp(-\sfrac{H^{(\ell)}}{2})_{k+1-m} z^{m-1}\right).\]
Then $\mathcal{L}_{tw} = \Delta(\mathcal{L}_{un})$.
\end{enumerate}
\end{proposition}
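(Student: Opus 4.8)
The plan is to follow the quantum Riemann--Roch strategy of \cite{Coates}, as adapted to root line bundles in \cite{CR}: realize $\mathcal{L}_{tw}$ as the image of $\mathcal{L}_{un}$ under a loop-group transformation obtained by integrating the flow of the parameters $s_k$. I will describe the cubic case; the quadric case is identical after replacing $3$ by $2$, $6$ by $8$, and $\P^1$ by $\P^3$, and so requires no separate argument. The relevant objects are the universal curve $\pi\colon\mathcal{C}\to\Mt^3_{g,n}(\P^1,d)$ and the universal root line bundle $\mathcal{T}$ on it, satisfying $\mathcal{T}^{\otimes 3}\cong\omega_{\log}\otimes f^*\O(-1)$; this $\mathcal{T}$ carries a well-defined multiplicity $\ell\in\{0,1,2\}$ at each marked point, which is exactly the label indexing the summand on which $\Delta$ acts. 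So this is a twisted theory in the sense of \cite{Coates} on the moduli space $\Mt^3_{g,n}(\P^1,d)$, with the twisting bundle taken to be $\mathcal{T}$ rather than a pullback from the target $\P^1$.

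The first step is to reduce the statement to an infinitesimal one. Since $\mathcal{D}_{tw}$ is obtained from $\mathcal{D}_{un}$ by inserting $c(R\pi_*\mathcal{T})=\exp\!\big(\sum_k s_k\ch_k(R\pi_*\mathcal{T})\big)$ into every correlator, it suffices to show that $\partial_{s_k}\mathcal{D}_{tw}$ is the result of applying to $\mathcal{D}_{tw}$ the quantization of the quadratic Hamiltonian obtained by differentiating in $s_k$ the exponent that defines $\Delta$. Granting this, one checks that the exponent defines a symplectic --- hence loop-group --- transformation $\Delta$, integrates the flow from $s_k=0$ to conclude $\mathcal{D}_{tw}=\widehat{\Delta}\,\mathcal{D}_{un}$, and then passes to the genus-zero part to obtain $\mathcal{L}_{tw}=\Delta(\mathcal{L}_{un})$, with the additional byproduct that $\mathcal{L}_{tw}$ is a Lagrangian cone. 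This last step is formal once the infinitesimal statement is available, exactly as in \cite{Coates}.

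The technical core is therefore the Grothendieck--Riemann--Roch computation of $\ch(R\pi_*\mathcal{T})$. Using the orbifold Grothendieck--Riemann--Roch theorem for the morphism $\pi$ --- as in the orbifold quantum Riemann--Roch argument underlying \cite{CR} --- I would write $\ch(R\pi_*\mathcal{T})=\pi_*\!\big(\ch(\mathcal{T})\,\Td^\vee(\omega_\pi)\big)$ plus corrections supported at the marked gerbes and at the nodal locus. Away from the special points the relation $\mathcal{T}^{\otimes 3}\cong\omega_{\log}\otimes f^*\O(-1)$ forces $3\,c_1(\mathcal{T})=c_1(\omega_\pi)-f^*H$, so the bulk pushforward factors: the $f^*\O(-1)$ contribution produces the graded pieces of $\exp(-H^{(\ell)}/3)$, while the $\omega_\pi$ contribution, assembled with the marked-point corrections, is governed by the generating function $\tfrac{ze^{zx}}{e^z-1}$ evaluated at the age $x=\ell/3$ of $\mathcal{T}$ at the marking. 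Combining the two factors reproduces, term by term, the summand $s_k\,\tfrac{B_m(\ell/3)}{m!}\,\exp(-H^{(\ell)}/3)_{k+1-m}\,z^{m-1}$ in the exponent of $\Delta$, and the direct sum over $\ell\in\{0,1,2\}$ reflects that $\mathcal{T}$ has a definite multiplicity at each marking.

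I expect the main obstacle to be the nodal contributions: Grothendieck--Riemann--Roch a priori produces boundary terms not of the product form above, and one must show --- using the balanced condition at nodes, the vanishing $R^0\pi_*\mathcal{T}=0$ from the genus-zero Gorenstein discussion, and the fact that $\mathcal{T}$ has complementary multiplicities $\ell$ and $3-\ell$ on the two branches of a node --- that these terms are precisely those demanded by the string, dilaton, and topological recursion relations, so that the $s_k$-flow integrates to the stated diagonal loop-group element rather than to something with off-diagonal corrections. A secondary, purely bookkeeping, point is tracking the constants $\deg(\rho)$, the overall factor $3$ built into the twisted correlators, and the relation $\overline{\psi}_i=d\psi_i$, so that $\mathcal{L}_{un}$ is genuinely a rescaling of the Gromov--Witten Lagrangian cone of $\P^1$ and $\Delta$ is literally the operator written in the statement; this is routine but should be carried out explicitly.
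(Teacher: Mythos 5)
Your strategy is the right one and matches the paper's: reduce to the infinitesimal identity $\partial_{s_k}\mathcal{D}_{tw}\approx\widehat{C_k}\mathcal{D}_{tw}$, compute $\ch_k(R\pi_*\mathcal{T})$ via orbifold Grothendieck--Riemann--Roch on $\pi\colon\mathcal{C}\to\Mt^3_{g,n}(\P^1,d)$, decompose the inertia stack of $\mathcal{C}$ into the untwisted sector, the marked gerbes, and the nodal locus, and then recognize the resulting three codimension strata as producing exactly the quantization of the displayed diagonal operator via Bernoulli polynomials. However, two points in your handling of the nodal contribution are off and would become genuine gaps if you carried the argument through.

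First, you invoke ``$R^0\pi_*\mathcal{T}=0$ from the genus-zero Gorenstein discussion,'' but the statement to be proved is an identity of \emph{total-genus} descendent potentials, so there is no acyclicity available, and indeed none is used in the paper. Second, the nodal terms are not absorbed by string, dilaton, or topological recursion relations. What actually happens is that the codimension-$2$ pushforward is computed from the Koszul resolution of the normal bundle of the nodal locus (giving $\iota^*T_\pi^{\mathrm{inv}}=-(L_+\otimes L_-)^\vee$ and $\iota^*T_\pi^{\mathrm{mov}}=L_+^\vee\oplus L_-^\vee$) and then matched against the \emph{decomposition axiom} for twisted correlators along boundary strata. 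This matching is where the real subtlety lives: because $\rho$ restricted to a boundary stratum with a narrow node acquires an extra ghost automorphism $(x,y)\mapsto(\zeta x,y)$, one has $\deg(\rho_D)=\tfrac{1}{3}\deg(\rho)$, which produces a factor $3\,r_{\mathrm{node}}$ that must be visible \emph{both} in the GRR output \emph{and} in the decomposition identity for the twisted correlators so that they cancel and yield the clean $\tfrac{\hbar}{2}(\partial\otimes_{C_k}\partial)\mathcal{D}_{tw}$ term. Without that bookkeeping, the $s_k$-flow does not integrate to the diagonal loop-group element in the statement. Your proposal flags the nodal terms as ``the main obstacle'' but proposes the wrong mechanism for resolving them; this is the step you would need to redo.
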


\begin{proof}

We will prove part (a) of the Proposition; the proof of part (b) is almost identical, so we will omit it.  Our proof is modeled closely after that of Theorem 4.2.1 of \cite{Tseng}, which in turn uses the main idea of Proposition 1.6.3 of \cite{Coates}.

Let us begin by reducing the statement to something more concrete.  According to the theory of Givental quantization, the desired statement $\mathcal{L}_{tw} = \Delta(\mathcal{L}_{un})$ will be implied if we can demonstrate that $\mathcal{D}_{tw} = \widehat{\Delta}(\mathcal{D}_{un})$.  In fact, it suffces to show that $\mathcal{D}_{tw} \approx \Delta(\mathcal{D}_{un})$, where the symbol $\approx$ denotes equality up to a scalar factor in $R$, since $\mathcal{L}_{tw}$ is a cone and hence is unaffected by scalar multiplication.  Furthermore, $\mathcal{D}_{tw} \approx \Delta(\mathcal{D}_{un})$ if and only if this holds after differentiating both sides with respect to $s_k$ for all $k$.  If $C_k: \mathcal{V}_{un} \rightarrow \mathcal{V}_{tw}$ denotes the infinitesimal symplectic transformation\footnote{The fact that this transformation is infinitesimal symplectic is required for the quantization to be defined; it follows from the same argument as in Lemma 4.1.3 of \cite{Tseng}.}
\[C_k = \bigoplus_{\ell= 0}^2 \left(\sum_{m \geq 0} \frac{B_m(\frac{\ell}{3})}{m!} \exp(-\sfrac{H^{(\ell)}}{3})_{k+1-m} z^{m-1}\right),\]
then we have $\Delta = \exp(\sum_{k \geq 0} s_k C_k)$, so $\mathcal{D}_{tw} \approx \Delta(\mathcal{D}_{un})$ is equivalent to the system of differential equations
\[\frac{\d \mathcal{D}_{tw}}{\d s_k} \approx \widehat{C_k} \mathcal{D}_{tw} + \mathcal{C} \mathcal{D}_{un}\]
for all $k$, where $\mathcal{C}$ is the cocycle coming from commuting the $\hat{z}$ terms of $\widehat{\Delta}$ past the $\widehat{1/z}$ term of $\widehat{C_k}$; see the discussion in Section 1.3.4 of \cite{Coates}.  Since we only seek equality up to a scalar factor, we can absorb the cocyle into the definition of $C_k$ and prove that $\d \mathcal{D}_{tw}/\d s_k \approx \widehat{C_k} \mathcal{D}_{tw}$.  We will use the orbifold Grothendieck-Riemann-Roch (oGRR) formula\footnote{An alternative, and perhaps shorter, proof can be obtained by passing to the coarse underlying curve and applying the usual GRR formula, as in \cite{CZ}.} (see Appendix A of \cite{Tseng} for the statement) to determine $\d \mathcal{D}_{tw}/\d s_k$ and identify it with an explicit expression for $\widehat{C_k}$.

Specifically, we have
\begin{equation}
\label{deriv}
\frac{\d \mathcal{D}_{tw}}{\d s_k} = \sum_{g,n,d} \frac{Q^d \hbar^{g-1}}{n!} \langle \t, \ldots, \t; \ch_k(R\pi_*\mathcal{T}) \; c(R\pi_*\mathcal{T})\rangle_{g,n,d} \mathcal{D}_{tw},
\end{equation}
and oGRR will be used to compute the contribution from $\ch_k(R\pi_*\mathcal{T})$.  As remarked in Section 7.3 of \cite{Tseng}, the moduli stack $\Mt^3_{g,n}(\P^1, d)$ can be embedded in a smooth stack $\M$ over which there exists a family $\mathcal{U}$ of orbicurves pulling back to the universal family $\mathscr{C}$ over $\Mt^3_{g,n}(\P^1, d)$.  Therefore, we lose no information if we assume that the moduli stack itself is smooth, in which case $\ch(R\pi_*\mathcal{T}) = \widetilde{\ch}(R\pi_*\mathcal{T})$ and oGRR states that
\begin{equation}
\label{oGRR}
\ch(R\pi_*\mathcal{T}) = I\pi_*(\widetilde{\ch}(\mathcal{T})\widetilde{\Td}(T_{\pi})).
\end{equation}
This splits into several terms according to the decomposition of $I \mathscr{C}$ into twisted sectors:
\[I\mathscr{C} = \mathscr{C} \sqcup \bigsqcup_{i=1}^n (\mathscr{S}_i^{(1)} \sqcup \mathscr{S}_i^{(2)}) \sqcup (\mathscr{Z}^{(1)} \sqcup \mathscr{Z}^{(2)}).\]
Here, $\mathscr{S}_i^{(h)}$ is the sector corresponding to the element $h \in \Z_3 = \{0,1,2\}$ of the isotropy group at the $i$th marked point and $\mathscr{Z}^{(h)}$ is the sector corresponding to the element $h$ of the isotropy group at the substratum of nodes.  Applying this decomposition to the right-hand side of (\ref{oGRR}) shows that $\ch(R\pi_*\mathcal{T})$ equals
\[\pi_*(\ch(\mathcal{T})\Td(T_{\pi})) + \sum_{i=1}^n \sum_{\ell}^2 \pi_*(\widetilde{\ch}(\mathcal{T})\widetilde{\Td}(T_{\pi})|_{\mathscr{S}_i^{(\ell)}}) + \sum_{\ell=1}^2 \pi_*(\widetilde{\ch}(\mathcal{T}) \widetilde{\Td}(T_{\pi})|_{\mathscr{Z}^{(\ell)}}).\]

The contribution from the nontwisted sector can be computed via a computation nearly identical to that of Proposition 1.6.3 of \cite{Coates}; the result is:
\[\pi_*\left(\ch(\mathcal{T}) \left( \Td^{\vee}(\overline{L}_{n+1}) - \sum_{i=1}^n s_{i*} \left[ \frac{\Td^{\vee}(N_i^{\vee})}{c_1(N_i^{\vee})}\right]_+ +\right.\right.\hspace{3cm}\]
\[\left.\left.\hspace{5cm}\iota_* \left[\frac{1}{\psi_+ \psi_-}\left(\frac{\Td^{\vee}(L_+)}{\psi_+} + \frac{\Td^{\vee}(L_-)}{\psi_-}\right)\right]_+\right)\right)_k.\]
We have identified the universal family with $\Mt^3_{g,n+1}(\P^1, d)'$, in which the prime indicates that the last marked point has multiplicity $1$.  In the second term, $s_i$ denotes the inclusion of the divisor $\Delta_i$ of the $i$th marked point and $N_i$ denotes the normal bundle of $\Delta_i$ in $\mathscr{C}$.  In the third term, $\iota: Z' \rightarrow \mathscr{C}$ is the composition of the inclusion $i: Z \rightarrow \mathscr{C}$ of the singular locus with the double cover $\gamma: Z' \rightarrow Z$ consisting of choices of a branch at each node; also, $L_{\pm}$ are the cotangent line bundles to the two branches of a node and $\psi_{\pm}$ are the first Chern classes of these line bundles.

Accordingly, we can split $\ch_k(R\pi_*\mathcal{T})$ into a codimension-$0$, codimension-$1$, and codimension-$2$ term, and we compute each separately.

\subsubsection{Codimension 0}

Since $\mathcal{T}^{\otimes 3} \cong \omega_{\log} \otimes f^*\O(-1)$, we have
\[\ch(\mathcal{T}) = \exp(\sfrac{K}{3}) \exp(-\sfrac{f^*H}{3}),\]
where $K = c_1(\omega_{\log})$.  Thus, the codimension $0$ term of $\ch(R\pi_*\mathcal{T})$ is
\[\pi_*(\exp(\sfrac{K}{3})\exp(-\sfrac{f^*H}{3})\Td^{\vee}(\overline{L}_{n+1})).\]
The contribution from the codimension 0 term to (\ref{deriv}), then, is $\mathcal{D}_{tw}$ times the following, in which the superscript $\bullet$ denotes invariants in which the last marked point has multiplicity $1$:
\begin{align*}
&\sum_{g,n,d} \frac{Q^d\hbar^{g-1}}{n!}\left\langle \t, \ldots ; \pi_*\left(\exp(\sfrac{K}{3})\exp(-\sfrac{f^*H}{3})\Td^{\vee}(\overline{L}_{n+1})\right)_{k+1}\; c(R\pi_*\mathcal{T})\right\rangle_{g,n,d} \\
=&\sum_{g,n,d} \frac{Q^d\hbar^{g-1}}{n!} \left\langle \pi^*\t, \ldots, \left(\exp(\sfrac{K}{3})\exp(-\sfrac{H}{3})\Td^{\vee}(\overline{L}_{n+1})\right)_{k+1}\; c(R\pi_*\mathcal{T})\right\rangle^{\bullet}_{g,n+1,d}\\
=&\sum_{g,n,d}  \frac{Q^d\hbar^{g-1}}{n!} \left\langle \t - \sigma_{1*}\left[\textstyle\frac{\t}{\psibar}\right]_+, \ldots, \left(\exp(\sfrac{K}{3})\exp(-\sfrac{H}{3})\Td^{\vee}(\overline{L}_{n+1})\right)_{k+1}; c(R\pi_*\mathcal{T})\right\rangle^{\bullet}_{g,n+1,d}.
\end{align*}
Now, under the identification of the universal family with $\Mt^3_{g,n}(\P^1, d)'$, $K$ is identified with $\psibar_{n+1}$.  Furthermore, $\psibar_{n+1}$ vanishes on the image of each $\sigma_{i*}$ with $1 \leq i \leq n$, so the above is equal to
\[\sum_{g,n,d}  \frac{Q^d\hbar^{g-1}}{n!} (\exp(\sfrac{\psibar_{n+1}}{3})\exp(-\sfrac{H}{3})\Td^{\vee}(\overline{L}_{n+1}))_{k+1}; c(R\pi_*\mathcal{T})\rangle^{\bullet}_{g,n+1,d}\]
\[\hspace{1cm} - \sum_{g,n,d}  \frac{Q^d\hbar^{g-1}}{(n-1)!} \langle \sigma_{1*} \left[ \textstyle\frac{\t}{\psibar}\right]_+, \ldots, (\exp(-\sfrac{H}{3})_{k+1}; c(R\pi_*\mathcal{T})\rangle_{g,n+1,d} \]
\begin{align*}
&= \sum_{g,n,d} \frac{Q^d\hbar^{g-1}}{(n-1)!} \left\langle \t, \ldots, \left(\exp(\sfrac{\psibar}{3})\exp(-\sfrac{H}{3})\Td^{\vee}(\overline{L}_n)\right)_{k+1}; c(R\pi_*\mathcal{T})\right\rangle^{\bullet}_{g,n,d} \\
&\hspace{1cm} - \sum_{g,n,d} \frac{Q^d\hbar^{g-1}}{(n-1)!} \left\langle \t, \ldots, \t, \exp(-\sfrac{H}{3})_{k+1} \left[\textstyle\frac{\t(\psibar)}{\psibar}\right]_+; c(R\pi_*\mathcal{T})\right\rangle_{g,n,d}\\
&\hspace{1cm} - \frac{1}{2\hbar}\left\langle \t, \t, (\exp(\sfrac{\psibar_3}{3}) \exp(-\sfrac{H}{3}) \Td^{\vee}(\overline{L}_3))_{k+1}; c(R\pi_*\mathcal{T})\right\rangle^{\bullet}_{0,3,0}\\
&\hspace{1cm}-\left\langle (\exp(\sfrac{\psibar_1}{2}))\exp(-\sfrac{H}{3})\Td^{\vee}(\overline{L}_1))_{k+1}; c(R\pi_*\mathcal{T})\right\rangle^{\bullet}_{1,1,0} .
\end{align*}
The last two summands are known respectively as the genus-zero and the genus-one exceptional terms.  Since $\psibar_3$ vanishes on $\Mt_{0,3}^3(\P^1, 0)$, the genus zero exceptional term equals
\[-\frac{1}{2\hbar} (\exp(-\sfrac{H}{3})_{k+1} \mathbf{q}, \mathbf{q})_{tw}.\]
The rank of $R\pi_*\mathcal{T}$ is zero on $\Mt_{1,1}^3(\P^1, 0)$, so the genus-one exceptional term does not depend on $s_k$.  It is easily computed, but it will yield only a scalar factor and hence does not affect our present computation.

\subsubsection{Codimension 1}

Since $K$ vanishes on the image of $\sigma_{i*}$ for all $i$, we have $\ch(\left.\mathcal{T}\right|_{\Delta_i}) = \exp(-f^*H/3)$.  Thus, the untwisted contribution to $\ch_k(R\pi_*\mathcal{T})$ from the $i$th marked point is
\[-\pi_*\left(\exp(-\sfrac{f^*H}{3})\;s_{i*}\left[\textstyle\frac{\Td^{\vee}(N_i^{\vee})}{c_1(N_i^{\vee})}\right]_+\right)_k = -\pi_*s_{i*}\left(\exp(-\sfrac{f^*H}{3})\;\left[\textstyle\frac{\Td^{\vee}(N_i^{\vee}}{c_1(N_i^{\vee})}\right]_+\right)_k.\]
If $\sigma_i: \Mt^3_{g,n}(\P^1, d) \rightarrow \Delta_i$ is the $i$th section, then we have $\sigma_{i*} \sigma_i^* = \id$ if the marked point is broad and $\sigma_{i*}\sigma_i^* = 3 \cdot \id$ if the marked point is narrow.  Also, we have $f \circ \sigma_i = \ev_i$, and Lemma 7.3.6 of \cite{Tseng} shows that $\sigma_i^* N_i^{\vee} = L_i$.  Since $\ev_i^*$ is zero away from the summand $H^*_{m_i}(\P^1) \otimes R$ where $m_i$ is the multiplicity of the $i$th marked point, the above can be rewritten as
\[-\frac{1}{r_i}\exp\left(-\frac{H^{(m_i)}}{3}\right)\left[\frac{\Td^{\vee}(L_i)}{\psi_i}\right]_+,\]
where $r_i$ is $1$ if the marked point is broad and $3$ if it is narrow.  Note that the evaluation map in this expression has been suppressed as it will appear as an insertion in twisted invariants.

If the marked point is narrow, there are also twisted sectors, which together contribute
\[\sum_{m=1}^2\pi_*(\widetilde{\ch}(\mathcal{T})\widetilde{\Td}(T_{\pi})|_{\mathscr{S}_i^{(m)}}) = \sum_{m=1}^2 \pi_*s_{i*}\left(\frac{\sum_{0 \leq \ell \leq 1} e^{2\pi i \frac{m \ell}{3}} \ch(\mathcal{T}^{(\ell)}|_{\Delta_i})}{1 - e^{2\pi i \frac{-m}{3}}\ch(N_i^{\vee})}\right),\]
where $\mathcal{T}^{(\ell)}$ is the subbundle of $\mathcal{T}$ in which the isotropy group acts by $e^{2\pi i \frac{\ell}{3}}$.  This is either all of $\mathcal{T}$ or is rank zero, depending on whether $\ell = m_i$, so we can write the above as
\[\frac{1}{3} \exp\left(-\sfrac{H^{(m_i)}}{3}\right) \sum_{1 \leq m \leq 2} \frac{e^{2\pi i \frac{mm_i}{3}}}{1 - e^{2\pi i \frac{-m}{3}}e^{\psi_i}},\]
where we have used $\sigma_i$ as above and again suppressed the evaluation.  It is straightforward to check (see Section 7.3.5 of \cite{Tseng}) that for each $\ell$,
\begin{equation}
\label{identity}
\sum_{1 \leq m \leq 2} \frac{\zeta^{m\ell}}{1 - \zeta^{-m}e^{\psi_i}} = \frac{3 e^{\ell\psi_i}}{1 - e^{3\psi_i}} - \frac{1}{1 - e^{\psi_i}},
\end{equation}
where $\zeta = e^{\frac{2\pi i}{3}}$.  Applying this to the above twisted codimension-$1$ contribution and adding it to the untwisted part, we obtain
\[- \sum_{m \geq 1} \frac{\exp(-\frac{H^{(m_i)}}{3}) B_m(\frac{m_i}{3})}{m!} \psibar_i^{m-1},\]
which is also the total contribution from a broad marked point.  In other words, if $A_m$ is the operator on $H_{tw}$ given by
\[A_m = \bigoplus_{\ell=0}^2 \exp(-\sfrac{H^{(\ell)}}{3}) B_m(\sfrac{\ell}{3}),\]
then the total codimension-$1$ contribution to $\d \mathcal{D}_{tw}/\d s_k$ in either the broad or narrow case is
\[- \sum_{g,n,d} \frac{Q^d \hbar^{g-1}}{(n-1)!} \left\langle \left( \sum_{m \geq 1} \frac{A_m}{m!}\psibar^{m-1}\right)_k \t, \ldots, \t; c(R\pi_*\mathcal{T})\right\rangle_{g,n,d} \mathcal{D}_{tw}.\]

\subsubsection{Codimension 2}

The same exact proof as in \cite{Coates} shows that the untwisted codimension-$2$ contribution to $\ch_k(R\pi_*\mathcal{T})$ can be expressed as
\[\frac{1}{2}\pi_*\iota_*\left(\frac{\ch(\mathcal{T}|_Z)}{\psi_+ + \psi_-}\left(\frac{1}{e^{\psi_+}-1} - \frac{1}{\psi_+} + \frac{1}{2} + \frac{1}{e^{\psi_-} -1} - \frac{1}{\psi_-} + \frac{1}{2}\right)\right).\]

To determine the twisted part, we must calculate the invariant and moving parts of $\iota^*T_{\pi}$.  These can be computed by pulling back the Koszul resolution of the normal bundle of $Z$ in $\mathscr{C}$ to the double cover $Z'$ (Section 7.3.7 of \cite{Tseng}), yielding the exact sequence
\begin{equation}
\label{koszul}
0 \rightarrow L_+ \otimes L_- \rightarrow L_+ \oplus L_- \rightarrow \iota^*T_{\pi} \rightarrow \O_{Z'} \rightarrow \O_{Z'} \rightarrow 0.
\end{equation}
Since the isotropy group acts by $-1$ on both $L_+$ and $L_-$, it acts trivially on their tensor product and nontrivially on their direct sum.  Thus, in $K$-theory we have
\[\iota^*T_{\pi}^{\text{inv}} = -(L_+ \otimes L_-)^{\vee}\]
and
\[\iota^*T_{\pi}^{\text{mov}} = L_+^{\vee} \oplus L_-^{\vee}.\]
By oGRR, then, we compute the twisted codimension-$2$ contribution to $\ch_k(R\pi_* \mathcal{T})$ to be the degree-$k$ part of the following:
\begin{align*}
&\sum_{m=1}^2 \pi_*(\widetilde{\ch}(\mathcal{T}) \widetilde{\Td}(T_{\pi})|_{\mathscr{Z}^{(m)}})\\
=& \frac{1}{2} \sum_{m=1}^2 \pi_* \iota_*\left( e^{2\pi i \frac{m m_{\text{node}}}{3}} \frac{\exp(-\frac{H^{(m_{\text{node}})}}{3})}{\psi_+ + \psi_-}\frac{e^{\psi_+ + \psi_-} -1}{(1-\zeta^{-m} e^{\psi_+})(1 - \zeta^m e^{\psi_+})}\right)\\
=&\frac{1}{2} \sum_{m=1}^2 \pi_*\iota_*\left( \frac{\exp(-\frac{H^{(m_{\text{node}})}}{3})}{\psi_+ + \psi_-} \left( \zeta^{mm_{\text{node}}} + \frac{\zeta^{mm_{\text{node}}}}{\zeta^{-m} e^{\psi_+} -1} + \frac{\zeta^{mm_{\text{node}}}}{\zeta^m e^{\psi_-} -1}\right)\right)
\end{align*}
Here, $m_{\text{node}}$ is the locally constant function on $Z'$ giving the action of the isotropy group at the node on $\mathcal{T}$.  The identity (\ref{identity}) can again be applied to simplify this expression; if $m_{\text{node}} \neq 0$, then we obtain
\[\frac{1}{2}\pi_* \iota_*\left(\frac{\exp(-\frac{H^{(m_{\text{node}})}}{3})}{\psi_+ + \psi_-}\left(-1 +  \frac{3e^{m_{\text{node}}}}{e^{3\psi_-} -1} - \frac{1}{\psi_+} + \frac{3e^{(3-m_{\text{node}})\psi_-}}{e^{3\psi_-} -1} - \frac{1}{\psi_-}\right)\right),\]
which when added to the untwisted codimension-$2$ contribution is
\[\frac{3}{2} \pi_* \iota_*\left( \frac{\exp(-\frac{H^{(m_{\text{node}})}}{3})}{\psi_+ + \psi_-} \left( \sum_{m \geq 2} \frac{B_m(\frac{m_{\text{node}}}{3})}{m!} \psibar_+^{m-1} + \frac{B_m(1 - \frac{m_{\text{node}}}{3})}{m!}\psibar_-^{m-1}\right)\right).\]
In fact, the same holds, via a slightly different computation, when $m_{\text{node}} = 0$.

Adding this to the untwisted part and using the identity $B_m(1-x) = (-1)^mB_m(x)$, one finds that the codimension-$2$ contribution to $\d \mathcal{D}_{tw}/\d s_k$ is $\mathcal{D}_{tw}$ times
\begin{equation}
\label{decomposition}
\frac{1}{2} \sum_{g,n,d} \frac{Q^d\hbar^{g-1}}{n!} \left\langle \t, \ldots; \pi_*\iota_*\left[\sum_{m \geq 2}\textstyle \frac{3r_{\text{node}}A_m}{m!} \frac{\psibar_+^{m-1} + (-1)^m\psibar_-^{m-1}}{\psibar_+ + \psibar_-}\right]_{k-1} \right\rangle^{tw}_{g,n,d},
\end{equation}
in which $r_{\text{node}}$ is $1$ if the node is broad and $3$ if it is narrow.

The idea at this point is to apply the same argument as in Theorem 1.6.4 of \cite{Coates} (see the heading ``Codimension-2 terms") to decompose (\ref{decomposition}) into a sum over the moduli spaces corresponding to the two sides of the node.  It is important to notice, however, that the relevant decomposition property in this setting is slightly different.  Namely, if $\widetilde{D}$ denotes the locus in $\Mt_{g,n}^3(\P^1, d)$ of curves with a separating node in which the two branches have genera $g_i$, $n_i$ marked points, and degrees $d_i$ (for $i=1,2$), then
\[3r_{\text{node}} \left( \frac{3}{\deg(\rho)}\int_{\widetilde{D}} \ev_1^*(\phi_1)\psi_1^{a_1} \cdots \ev_n^*(\phi_n)\psi_n^{a_n} c(R\pi_*\mathcal{T}) \right)\]
\[=\left( \frac{3}{\deg(\rho)}\int_{\Mt_{g_1, n_1+1}^3(\P^1, d_1)} \cdots c(R\pi_*\mathcal{T}) \right)\left( \frac{3}{\deg(\rho)}\int_{\Mt_{g_2, n_2+1}^3(\P^1, d_2)} \cdots c(R\pi_*\mathcal{T})\right),\]
where the integrands on the right-hand side depend on which marked points lie on which components in $\widetilde{D}$ and in all cases the integral is against the pullback of the virtual class under $\rho$.  The proof of this equality is an application of the projection formula, using the fact that if $\rho_D = \left.\rho\right|_{\widetilde{D}}$, then in the case where the node is narrow one has $\deg(\rho_D) = \frac{1}{3} \deg(\rho)$ due to the presence of an additional ``ghost" automorphism acting locally around the node as $(x,y) \mapsto (\zeta x, y)$.  An analogous computation shows the decomposition property for nonseparating nodes.

In particular, the factor of $3r_{\text{node}}$ appearing in (\ref{decomposition}) also appears in the decomposition property for twisted correlators, so (\ref{decomposition}) can be expressed as
\begin{align*}
&\frac{1}{2} \sum_{\substack{g_1, g_2\\ n_1, n_2\\ d_1, d_2}} \frac{Q^{d_1 + d_2}\hbar^{g_1 + g_2 -1}}{n_1!n_2!} \sum_{r, s, \alpha, \beta} \left \langle \t, \ldots, \t, q_r^{\alpha}\phi_{\alpha} \psibar_+^r; c(R \pi_*\mathcal{T})\right\rangle_{g_1, n_1+1, d_1} \times\\
&\hspace{2cm} \left\langle q_s^{\beta}\phi_{\beta}\psibar_-^s, \t, \ldots, \t; c(R\pi_*\mathcal{T})\right\rangle_{g_2, n_2 + 1, d_2} \mathcal{D}_{tw}\\
+&\frac{1}{2}\sum_{g,n,d} \frac{Q^d \hbar^{g-1}}{n!} \sum_{r,s,\alpha, \beta} \left \langle \t, \ldots, \t, q_r^{\alpha}\phi_{\alpha} \psibar_+^r, q_s^{\beta}\phi_{\beta}\psibar_-^s; c(R\pi_*\mathcal{T}) \right\rangle_{g-1,n,d} \mathcal{D}_{tw},
\end{align*}
where the $q$'s are determined by the requirement that $\displaystyle\sum_{r,s,\alpha, \beta} q_r^{\alpha}\phi_{\alpha}\psibar_+^r\otimes q_s^{\beta} \phi_{\beta} \psibar_-^s $ equals
\[\left( \sum_{m \geq 2} \frac{A_m}{m!} \frac{\psibar_+^{m-1} + (-1)^m \psibar_-^{m-1}}{\psibar_+ + \psibar_-}\right)_{k-1} \wedge (g^{\alpha\beta}\phi_{\alpha} \otimes \phi_{\beta})\]
and $g^{\alpha \beta}$ is the inverse of the matrix for the twisted Poincar\'e pairing.

By Appendix C of \cite{Tseng}, this equals
\[\frac{\hbar}{2} (\d \otimes_{C_k} \d) \mathcal{D}_{tw}\]
for
\[C_k = \bigoplus_{\ell=0}^2 \sum_{m \geq 1} \frac{B_m(\frac{\ell}{3})}{m!} \exp(-\sfrac{H^{(\ell)}}{3})_{k+1-m} z^{m-1}.\]

\subsubsection{Putting everything together}

The sum of the codimension-$1$ and nonexceptional codimension-$0$ contributions is
\[\displaystyle\sum_{g,n,d} \frac{Q^d\hbar^{g-1}}{(n-1)!} \langle \t, \ldots, \t, (\exp(\sfrac{\psibar_n}{3})\exp(-\sfrac{H}{3})\Td^{\vee}(L_n))_{k+1}; c(R\pi_*\mathcal{T})\rangle^{\bullet}_{g,n,d} \mathcal{D}_{tw}\]
\begin{equation}
\label{codim0and1}
-\displaystyle\sum_{g,n,d} \frac{Q^d\hbar^{g-1}}{(n-1)!} \langle \t, \ldots, \t, \exp(-\sfrac{H}{3})_{k+1} \left[\textstyle\frac{\t(\psibar)}{\psibar}\right]_+; c(R\pi_*\mathcal{T})\rangle_{g,n,d} \mathcal{D}_{tw}
\end{equation}
\[\hspace{1cm}-\displaystyle\sum_{g,n,d} \frac{Q^d\hbar^{g-1}}{(n-1)!} \left\langle \left(\sum_{m \geq 1} \frac{A_m}{m!}\; \psibar^{m-1}\right)_k \t, \ldots, \t; c(R\pi_*\mathcal{T})\right\rangle_{g,n,d} \mathcal{D}_{tw}.\]
Using that
\[\exp(-\sfrac{H}{3})_{k+1}\left[\sfrac{\t(\psibar)}{\psibar}\right]_+ = \displaystyle\bigoplus_{\ell=0}^2 \exp(-\sfrac{H^{(\ell)}}{3})_{k+1} \left(\lfrac{\t(\psibar)- t_0}{\psibar}\right)\]
and
\[\sum_{m \geq 1} \frac{A_m}{m!} z^{m-1} = \bigoplus_{\ell=0}^2 \exp(-\sfrac{H^{(\ell)}}{3}) \left( \lfrac{e^{\frac{\ell}{3}z}}{e^z -1} - \frac{1}{z}\right),\]
we find that the sum of the second two terms in (\ref{codim0and1}) is $\mathcal{D}_{tw}$ times
\[-\sum_{g,n,d} \frac{Q^d\hbar^{g-1}}{(n-1)!} \left\langle \left[ \left( \frac{\sum_{0 \leq \ell \leq 2} \exp(-\frac{H^{(\ell)}}{3}) e^{\frac{\ell}{3}\psibar}}{e^{\psibar}-1}\right)_k \t(\psibar)\right]_+, \ldots ; c(R\pi_*\mathcal{T})\right\rangle_{g,n,d}.\]
Also, keeping in mind that $1 \in H^*_1(\P^1)$, we find that the contribution from the remaining codimension-$0$ nonexceptional term is equal to
\begin{align*}
(\exp(\psibar/3)\exp(-H/3)\Td^{\vee}(\overline{L_n}))_{k+1} &= \left(\frac{\exp(-\frac{H}{3})e^{\frac{1}{3}\psibar}}{e^{\psibar}-1}\psibar\right)_{k+1}\\
&= \left[\left(\frac{\exp(-\frac{H}{3})e^{\frac{1}{3}\psibar}}{e^{\psibar}-1}\right)_k 1\psibar\right]_+\\
&=\left[ \left( \frac{ \sum_{0 \leq \ell \leq 2} \exp(- \frac{H^{(\ell)}}{3}) e^{\frac{\ell}{3} \psibar}}{e^{\psibar} -1}\right)_k 1\psibar\right]_+.
\end{align*}
Therefore, the sum of the codimension-$1$ and nonexceptional codimension-$0$ terms is $\mathcal{D}_{tw}$ times
\[-\sum_{g,n,d} \frac{Q^d\hbar^{g-1}}{(n-1)!} \left\langle \left[ \left(\frac{\sum_{0 \leq \ell \leq 2} \exp(-\frac{H^{(\ell)}}{3}) e^{\frac{\ell}{3}\psibar}}{e^{\psibar} -1}\right)_k \mathbf{q}(\psibar)\right]_+ , \ldots; c(R\pi_*\mathcal{T})\right\rangle_{g,n,d}\]
and the computations in Example 1.3.3.1 of \cite{Coates} shows that this equals $-\d_{C_k}\mathcal{D}_{tw}$ for $C_k$ as above.

Combining everything and using the explicit description of quantized operators in Section 1.3.3 of \cite{Coates}, we have proved that
\[\frac{\d \mathcal{D}_{tw}}{\d s_k} = \frac{1}{2\hbar} \Omega_{tw}((C_k\mathbf{q})(-z), \mathbf{q}(z)) - \d_{C_k}\mathcal{D}_{tw} + \frac{\hbar}{2}(\d \otimes_{C_k} \d) \mathcal{D}_{tw} = \widehat{C_k}\mathcal{D}_{tw},\]
which is part (a) of the proposition.

The proof of part (b) is nearly identical and somewhat simpler, since there is only one nontrivial twisted sector associated to each marked point and to the divisor of nodes, so we omit it.
\end{proof}

\subsection{Landau-Ginzburg $I$-function}

As in \cite{CR}, \cite{Coates}, and \cite{CCIT}, one can define a certain hypergeometric modification $I_{tw}$ of the untwisted $J$-function in such a way that the family $\Delta^{-1}I_{tw}(t, -z)$ lies on the untwisted Lagrangian cone $\mathcal{L}_{un}$; in light of the above, it follows that $I_{tw}(t,-z) \in \mathcal{L}_{tw}$.  When we take a nonequivariant limit $\lambda \rightarrow 0$ and set the parameters $s_k$ as in (\ref{sk}), we will obtain a family lying on $\mathcal{L}_{hyb}$, and in fact, this family will determine the entire cone just as the hybrid $J$-function does.

As usual, we will define $I_{tw}$ only in the case of the cubic singularities, commenting only briefly on how to apply the same procedure to define $I_{tw}$ in the other case.

\subsubsection{Setup in cubic case}

First, decompose $J_{un}$ according to topological types, as in \cite{CCIT}.  The {\it topological type} of an element of some $\Mt^3_{g,n}(\P^1, d)$ is the triple $\Theta = (g,d,\mathbf{i})$, where $g$ is the genus of the source curve, $d$ is the degree of the map, and $\mathbf{i} = (i_1, \ldots, i_n)$ gives the multiplicities of the line bundle at each of the marked points.  Let $J_{\Theta}$ be the contribution to $J_{un}$ from invariants of topological type $\Theta$, and write
\[J_{un}(t, z) = \sum_{\Theta} J_{\Theta}(t, z),\]
where the sum is over all topological types.\footnote{The $z+t$ term in $J_{un}(t,z)$ should be understood as contributing to the unstable topological types corresponding to $(g,n,d) = (0,1,0)$ and $(0,2,0)$.}

Let us also fix some notation, again mimicking \cite{CCIT}.  Set
\[\mathbf{s}(x) = \sum_{k \geq 0} s_k \frac{x^k}{k!}\]
for any $x \in \mathcal{V}_{tw}^+$.  For a multiplicity $h \in \{0,1,2\}$, let
\[D_{(h)} = \sum_{\alpha=0}^1 t^{\alpha}_{0, (h)} \frac{\d}{\d t^{\alpha}_{0, (h)}}\]
denote the dilation vector field on $H^*_{h}(\P^1)$, where for $\t = t_0 + t_1z + t_2z^2 + \cdots \in H_{GW}[[z]]$ we write
\[t_i = \sum_{\substack{0 \leq \alpha \leq 1\\ 0 \leq h \leq 2}} t^{\alpha}_{i, (h)} \phi_i^{(h)}.\]
with $\{\phi_i\}$ denoting a basis for $H^*_h(\P^1)$.  Also, set
\[G_y(x,z) = \sum_{k,m \geq 0} s_{k+m-1} \frac{B_m(y)}{m!} \frac{x^k}{k!} z^{m-1}\]
for $y \in \Q$ and $x \in H_{tw}$, where $z$ denotes the variable in $\mathcal{V}_{tw}$, as usual.

For each topological type $\Theta$, let $\overline{i_n}$ be the multiplicity that is equal modulo $3$ to $-i_n$.  Set
\[N_{\Theta} = \frac{-2 + n - d - \sum_{j=1}^{n-1}i_j}{3} + \frac{\overline{i_n}}{3}.\]
Note that this is an integer, since it equals either
\[\frac{-2 + n - d - \sum_{j=1}^n i_j}{3} = \deg(|L|)\]
or $\deg(|L|) + 1$ depending on whether $i_n$ is zero or nonzero.  Thus, we can set
\[M_{\Theta} = \frac{ \displaystyle\prod_{- \infty < m \leq N_{\Theta}} \exp\left(\mathbf{s}(-\sfrac{H^{(\overline{i_n})}}{3} + (m - \sfrac{\overline{i_n}}{3})z)\right)}{ \displaystyle \prod_{-\infty < m \leq 0} \exp\left(\mathbf{s}(-\sfrac{H^{(\overline{i_n})}}{3} + (m - \sfrac{\overline{i_n}}{3})z)\right)}\]
Note that these definitions of $N_{\Theta}$ and $M_{\Theta}$ are direct generalizations of those appearing in \cite{CCIT}, and the same proof shows that the properties in Lemma 4.5 and equations (12) and (13) of that paper still hold.

\subsubsection{Quadric case}

The definitions of $\mathbf{s}(x)$ and $G_y(x,z)$ remain unchanged in the case of the quadric singularities, while the dilation vector field on $H^*_h(\P^3)$ changes only in that the summation runs over a basis for $H^*(\P^3)$, so $0 \leq \alpha \leq 3$.  As for $N_{\Theta}$, we should now take $\overline{i_n}$ to be equal to $-i_n$ modulo $2$, which is the same as setting $\overline{i_n} = i_n$.  The resulting definition is:
\[N_{\Theta} = \frac{-2 + n - d - \sum_{j=1}^{n-1} i_j}{2} + \frac{i_n}{2}.\]
Similarly,
\[M_{\Theta} = \frac{ \displaystyle\prod_{- \infty < m \leq N_{\Theta}} \exp\left(\mathbf{s}(-\sfrac{H^{(i_n)}}{2} + (m - \sfrac{i_n}{2})z)\right)}{ \displaystyle \prod_{-\infty < m \leq 0} \exp\left(\mathbf{s}(-\sfrac{H^{(i_n)}}{2} + (m - \sfrac{i_n}{2})z)\right)}\]
Once again, the necessary properties of these expressions follow direct from the analogues in \cite{CCIT}.

\subsubsection{Twisted $I$-function}

In either of the two cases under consideration, define
\[I_{tw}(\t, z) = \sum_{\Theta} M_{\Theta}(z) \; J_{\Theta}(\t, z).\]
The hybrid $I$-function will be defined by putting $s_k$ to the values in (\ref{sk}), taking $\lambda \rightarrow 0$, specializing to multiplicity-$1$ divisor insertions with no $\psi$ classes, and multiplying by a factor.

\begin{theorem}
\label{Ifunction}
\begin{enumerate}[(a)]
\item For the cubic singularity, define
\[I_{hyb}(t, z) = \sum_{\substack{d \geq 0\\ d \not \equiv -1 \mod 3}} \frac{ze^{(d +1+ \frac{H^{(d+1)}}{z})t}}{3^{6\lfloor \frac{d}{3}\rfloor}} \; \frac{ \displaystyle\prod_{\substack{1 \leq b \leq d\\ b \equiv d+1 \mod 3}} (H^{(d+1)}+bz)^{4}}{ \displaystyle\prod_{\substack{1 \leq b \leq d\\ b \not \equiv d+1 \mod 3}} (H^{(d+1)}+bz)^{2}},\]
where $t = t + 0z + 0z^2 + \cdots \in \mathcal{V}_{hyb}^+$ and $t \in H^2_1(\P^1)$.  Then the family $I_{hyb}(t, -z)$ of elements of $\mathcal{V}_{hyb}$ lies on the Lagrangian cone $\mathcal{L}_{hyb}$.

\item For the quadric singularity, define
\[I_{hyb}(t, z) = \sum_{\substack{d \geq 0\\ d \not \equiv -1 \mod 2}} \frac{ze^{(d +1+ \frac{H^{(d+1)}}{z})t}}{2^{8\lfloor \frac{d}{2}\rfloor}} \; \frac{ \displaystyle\prod_{\substack{1 \leq b \leq d\\ b \equiv d+1 \mod 2}} (H^{(d+1)}+bz)^{4}}{ \displaystyle\prod_{\substack{1 \leq b \leq d\\ b \not \equiv d+1 \mod 2}} (H^{(d+1)}+bz)^{4}},\]
where $t \in H^2_1(\P^3)$.  Then the family $I_{hyb}(t, -z)$ of elements of $\mathcal{V}_{hyb}$ lies on the Lagrangian cone $\mathcal{L}_{hyb}$.
\end{enumerate}

\begin{remark}
These $I$-functions have expressions in terms of the $\Gamma$ function, which can be useful for computations-- see the appendix.
\end{remark}

\begin{proof}
The proof mimics that of Theorem 4.6 of \cite{CCIT}.  We will begin by proving that $I_{tw}(\t, -z)$ lies on $\mathcal{L}_{tw}$ for the cubic singularity, and then show how to obtain $I_{hyb}$ from $I_{tw}$.  As usual, everything we say will carry over to the quadric case with only minor modifications, so we omit the proof.

Using equations (12) and (13) of \cite{CCIT}, it is easy to check that
\begin{align*}
M_{\Theta}(-z) &= \exp\left( G_0\left(-\sfrac{H^{(\overline{i_n})}}{3} + \sfrac{\overline{i_n}}{3}z, z\right) - G_0\left(- \sfrac{H^{(\overline{i_n})}}{3} + (\sfrac{\overline{i_n}}{3} - N_{\Theta})z, z\right)\right)\\
&= \exp\left( G_{\sfrac{\overline{i_n}}{3}} \left(-\sfrac{H^{(\overline{i_n})}}{3}, z\right) - G_0\left(- \sfrac{H^{(\overline{i_n})}}{3} + (\frac{\overline{i_n}}{3} - N_{\Theta})z, z\right) \right).
\end{align*}
Furthermore,
\[\Delta = \bigoplus_{\ell=0}^{3} \exp\left(G_{\frac{\ell}{3}}\left( - \sfrac{H^{(\ell)}}{3}, z\right)\right).\]

Given that $\Delta(\mathcal{L}_{un}) = \mathcal{L}_{tw}$, the desired statement is equivalent to the statement $\Delta^{-1}I_{tw}(\t, -z) \in \mathcal{L}_{un}$.  Using Lemma 4.5(1) of \cite{CCIT} and the above expression for $M_{\Theta}(-z)$, this is equivalent to
\[\sum_{\Theta} \exp\left( - G_{\frac{1}{3}}\left( - \sfrac{H^{(\overline{i_n})}}{3} + \sfrac{d}{3}z - \sfrac{\sum_{j=1}^{n-1}(1-i_j)}{3}z, x\right)\right)J_{\Theta}(\t, -z) \in \mathcal{L}_{un}.\]
Now, we can write
\[\frac{\sum_{j=1}^{n-1}(1-i_j)}{3} = \frac{n_0}{3} - \frac{n_2}{3},\]
and Lemma 4.5(2) of \cite{CCIT} says that $n_0$ and $n_2$ act on $J_{\Theta}$ in the same way, respectively, as $D_{(0)}$ and $D_{(2)}$.  Furthermore, $-\frac{H^{(\overline{i_n})}}{3} + \frac{d}{3}z$ acts on $J_{\Theta}$ in the same way as does $-z\nabla_{-\frac{H^{(\overline{i_n})}}{3}}$.  So if $D = \frac{1}{3} D_{(0)} - \frac{1}{3} D_{(2)}$, we can re-express the desired statement as
\begin{equation}
\label{Js}
\exp\left(- G_{\frac{1}{3}}\left(z \nabla_{-\frac{H}{3}} - zD, z\right)\right) J_{un}(\t, z) \in \mathcal{L}_{un},
\end{equation}
where $H = H^{(0)} + H^{(1)} + H^{(2)}$.

Denote the expression in (\ref{Js}) by $J_{\mathbf{s}}(\t, -z)$.  To prove that $J_{\mathbf{s}}(\t, -z) \in \mathcal{L}_{un}$ is to show that
\[E_j(J_{\mathbf{s}}(\t, -z)) = 0\]
for all $j$, where $E_j$ are the functions $\mathcal{V}_{un} \rightarrow H_{un}$ given by
\[(\mathbf{p}, \mathbf{q}) \mapsto p_j - \sum_{n,d, \alpha, h} \frac{Q^d}{n!} \langle \t, \ldots, \t, \psi^j \phi_{\alpha}^{(h)} \rangle^{un}_{g,n+1, d} \phi^{\alpha, (h)}.\]
This is proved exactly as in \cite{CCIT}, by induction on the degree of the terms in $E_j(J_{\mathbf{s}})(\t, -z)$ with respect to the variables $s_k$ under the convention that $s_k$ has degree $k+1$.

The terms of degree $0$ vanish, as such terms are constant with respect to the $s_k$ and vanish when all $s_k$ are $0$ because $J_{\mathbf{0}} = J_{un}$.  Assume, then, that $E_j(J_{\mathbf{s}}(\t, -z))$ vanishes up to degree $n$ in the variables $s_k$.  To show that it vanishes up to degree $n+1$, we will show that $\frac{\d}{\d s_i} E_j(J_{\mathbf{s}}(\t, -z))$ vanishes up to degree $n$ for all $i$.  We have
\[\frac{\d}{\d s_i} E_j(J_{\mathbf{s}}(\t, -z)) = d_{J_{\mathbf{s}}(\t, -z)} E_j(z^{-1}P_i J_{\mathbf{s}}(\t, -z)),\]
where
\[P_i = -\sum_{m=0}^{i+1} \frac{1}{m!(i+1-m)!} z^m B_m(0)(-z\Delta_{-\frac{H}{3}} - zD)^{i+1-m}.\]
The inductive hypothesis implies the existence of an element $\widetilde{J}_{\mathbf{s}}(\t, -z) \in \mathcal{L}_{un}$ that agrees with $J_{\mathbf{s}}(\t, -z)$ up to degree $n$ in the $s_k$, and hence satisfies
\[\frac{\d}{\d s_i} E_j J_{\mathbf{s}}(\t, -z) = d_{\widetilde{J}_s(\t, -z)} E_j(z^{-1}P_i\widetilde{J}_{\mathbf{s}}(\t, -z))\]
up to degree $n$ in these variables.  It suffices, then, to show that the right-hand side of this equation is identically zero, or in other words that
\[P_i \widetilde{J}_{\mathbf{s}}(\t, -z) \in z T_{\widetilde{J}_{\mathbf{s}}(\t, -z)} \mathcal{L}_{un} = \mathcal{L}_{un} \cap T_{\widetilde{J}_{\mathbf{s}}(\t, -z)} \mathcal{L}_{un}.\]
Let $T =T_{\widetilde{J}_{\mathbf{s}}(\t, -z)} \mathcal{L}_{un}$.  Breaking $P_i$ up into a sum of terms of the form
\[C z^a (z \nabla_{-\frac{H}{3}})^b (zD)^c\]
for a coefficient $C$ and exponents $a,b,$ and $c$, it suffices to show that $z$, $z \nabla_{-\frac{H}{3}}$, and $zD$ all preserve $zT$.  In the first case, this is because $zT = \mathcal{L}_{un} \cap T \subset T$ and hence $z(zT) \subset zT$.  In the second case, the operator $\nabla_{-\frac{H}{3}}$ is a first-order derivative and hence takes $\mathcal{L}_{un}$ to $T$; it follows that $\nabla_{-\frac{H}{3}}$ takes $zT = \mathcal{L}_{un} \cap T \subset \mathcal{L}_{un}$ to $T$ also, and hence $z\nabla_{-\frac{H}{3}}$ takes $zT$ to $zT$.  The same argument applies to the operator $zD$, so this completes the proof that $I_{tw}(\t, -z) \in \mathcal{L}_{tw}$ in the cubic case.

Now suppose we set $s_k$ as in (\ref{sk}), so that $ c(-V) = e_{\C^*}(V^{\vee})^{6}$, and take a limit $\lambda \rightarrow 0$.  It is easy to check via the Taylor expansion of the natural logarithm that in the cubic case, we get
\[M_{\Theta}(z) = \frac{\displaystyle\prod_{- \infty < m \leq 0}\left(\sfrac{H^{(\overline{i_n})}}{3} + (\sfrac{\overline{i_n}}{3} - m)z\right)^{6}}{\displaystyle\prod_{- \infty < m \leq N_{\Theta}}\left(\sfrac{H^{(\overline{i_n})}}{3} + (\sfrac{\overline{i_n}}{3} - m)z\right)^{6}}.\]
Restrict $\t$ to allow only those insertions in $H^2_1(\P^1)$ with no $\psi$ classes; in this case,
\[N_{\Theta} = \frac{-d-1}{3} + \frac{\overline{i_n}}{3},\]
which is always nonpositive, and $\overline{i_n} \equiv d+1 \mod 3$.  Thus, we obtain
\[M_{\Theta}(z) = \prod_{\substack{0 \leq b < \frac{d+1}{3}\\ \{b\} = \{\frac{d+1}{3}\}}} \left(\frac{H^{(d+1)}}{3} + bz\right)^{6},\]
where we use the convention $H^{(h)} = H^{(h \mod 3)}$ if $h \geq 3$.  Notice that if $d + 1 \equiv 0 \mod 3$, then one of the factors in the above product is $b=0$, in which case the product is $0$ because $H^2 = 0$.  Thus, $M_{\Theta}(z)$ vanishes in these cases.\footnote{In fact, we already knew that this had to be the case, because the fact that $I_{tw}(t,-z) \in \mathcal{L}_{tw}$ implies that $I_{tw}(t, z)$ differs from the small hybrid $J$-function by a change of variables, and the hybrid invariants vanish if any insertion is broad.}

Set $t = t_0 + 0z + 0z^2 + \cdots$.  Since untwisted invariants are essentially Gromov-Witten invariants of $\P^1$, we can compute $J_{\Theta}(t,z)$ explicitly in every case where $\Theta$ corresponds to a nonempty component of the moduli space.  Indeed, Givental's Mirror Theorem for $\P^1$ states that
\[1 + \sum_{d, \alpha} Q^d \left \langle \frac{\phi_{\alpha}}{z-\psi}, 1 \right\rangle_{0,d} \phi^{\alpha} = \sum_d Q^d \frac{1}{((H+z) \cdots (H+dz))^2}.\]
Using the string and divisor equations, then, one can show that
\[\sum_{\Theta \text{ with degree } d} J_{\Theta} = \frac{3ze^{(\frac{H^{(d+1)}}{z} +d)t}}{((H^{(d+1)} + z) \cdots (H^{(d+1)} + dz))^2}.\]
Since all $\Theta$ with the same degree yield the same $M_{\Theta}$, namely
\[M_{\Theta} = \frac{1}{3^{6\lfloor \frac{d}{3}\rfloor}} \prod_{\substack{1 \leq b \leq d\\ b \equiv d + 1 \mod 3}} (H^{(d+1)} + bz)^6,\]
we obtain a formula for $I_{tw}(t,z)$.  Writing $\Theta = (0, d, (1, \ldots, 1))$ (with $k$ $1$'s) and taking $Q \rightarrow 1$ as is done in the Gromov-Witten setting, the formula is
\[I_{tw}(t, z) =\sum_{\substack{d \geq 0\\ d \not \equiv -1 \mod 3}} \frac{3ze^{(\frac{H^{(d+1)}}{z}+d)t}}{3^{6\lfloor \frac{d}{3}\rfloor}} \; \frac{ \displaystyle\prod_{\substack{1 \leq b \leq d\\ b \equiv d+1 \mod 3}} (H^{(d+1)}+bz)^{4}}{ \displaystyle\prod_{\substack{1 \leq b \leq d\\ b \not \equiv d+1 \mod \delta}} (H^{(d+1)}+bz)^{2}}.\]
Multiplying by $\frac{1}{3}e^{t}$, which preserves $\mathcal{L}_{hyb}$ because it is a cone, gives the function $I_{hyb}$ of the statement.  An analogous computation shows that the hybrid $I$-function in the quadric case is as stated.
\end{proof}
\end{theorem}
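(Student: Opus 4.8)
The plan is to deduce the result from the twisted theory of Section~\ref{correspondence} together with Proposition~\ref{symplectic}. First I would introduce the \emph{twisted $I$-function} $I_{tw}(\t,z)=\sum_\Theta M_\Theta(z)\,J_\Theta(\t,z)$, where the sum runs over topological types $\Theta=(g,d,\mathbf{i})$, $J_\Theta$ denotes the contribution to the untwisted $J$-function $J_{un}$ from invariants of type $\Theta$, and $M_\Theta$ is the hypergeometric modification factor built from $\mathbf{s}$ and the combinatorial data $N_\Theta$ of $\Theta$, in direct analogy with \cite{CCIT}. The goal is then twofold: first, to show that $I_{tw}(\t,-z)$ lies on $\mathcal{L}_{tw}$; and second, to specialize the parameters $s_k$, take a nonequivariant limit, and check that the resulting family is exactly $I_{hyb}$ and still lies on $\mathcal{L}_{hyb}$.

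For the first goal, Proposition~\ref{symplectic} gives $\mathcal{L}_{tw}=\Delta(\mathcal{L}_{un})$, so it suffices to prove $\Delta^{-1}I_{tw}(\t,-z)\in\mathcal{L}_{un}$. I would first record explicit formulas for $M_\Theta(-z)$ and for $\Delta$ in terms of the generating series $G_y(x,z)$ — these follow from equations~(12) and~(13) of \cite{CCIT} specialized to our weights — and combine them to rewrite $\Delta^{-1}I_{tw}(\t,-z)$ as $\exp\!\left(-G_{1/3}(z\nabla_{-H/3}-zD,z)\right)J_{un}(\t,z)$, where $D=\tfrac13 D_{(0)}-\tfrac13 D_{(2)}$ and the dilation and divisor operators act on $J_{un}$ exactly as they do on $J_\Theta$ by Lemma~4.5 of \cite{CCIT}. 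Call this modified function $J_{\mathbf{s}}(\t,-z)$. I would then prove $J_{\mathbf{s}}(\t,-z)\in\mathcal{L}_{un}$ by induction on the degree in the variables $s_k$, assigning $s_k$ degree $k+1$: the degree-zero part is $J_{\mathbf{0}}=J_{un}$ itself, and for the inductive step one differentiates in $s_i$, observes that this amounts to acting by an operator $P_i$ which is a polynomial in $z$, $z\nabla_{-H/3}$ and $zD$, and invokes the geometric recursion property $zT_f\mathcal{L}_{un}=\mathcal{L}_{un}\cap T_f\mathcal{L}_{un}$ to conclude that $P_i$ preserves $zT_f\mathcal{L}_{un}$. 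This is the step I expect to be the main obstacle: one must verify carefully that each building block preserves the relevant subspace ($z$ by the cone property, $z\nabla_{-H/3}$ because $\nabla_{-H/3}$ is a first-order derivative sending $\mathcal{L}_{un}$ into its tangent space, and $zD$ for the same reason), and that the inductive bookkeeping closes up precisely as in Theorem~4.6 of \cite{CCIT}.

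For the second goal, I would set $s_k$ as in~(\ref{sk}) so that the characteristic class becomes $e_{\C^*}(V^\vee)^{6}$, take the limit $\lambda\to0$, and restrict $\t$ to a single degree-two insertion from $H^2_1(\P^1)$ with no $\psi$-classes. A short computation with the Taylor expansion of the logarithm turns $M_\Theta$ into an explicit finite product of linear factors; the selection rule forces $d\not\equiv-1\bmod 3$, and otherwise $M_\Theta$ vanishes since the $b=0$ factor is a multiple of $H^2=0$ on $\P^1$. Because untwisted invariants are (up to the factor $3$) the Gromov-Witten invariants of $\P^1$, Givental's Mirror Theorem for $\P^1$ together with the string and divisor equations gives $\sum_{\Theta\text{ of degree }d}J_\Theta$ in closed form; assembling $I_{tw}=\sum_\Theta M_\Theta J_\Theta$ and multiplying by the harmless scalar $\tfrac13 e^{t}$ — legitimate because $\mathcal{L}_{hyb}$ is a cone — yields exactly the stated $I_{hyb}$, which therefore lies on $\mathcal{L}_{hyb}$. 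Finally, part~(b) follows from the identical argument with $\P^1$ replaced by $\P^3$, the exponent $6$ by $8$, and the modulus $3$ by $2$; since each marked point and the divisor of nodes then carries only a single nontrivial twisted sector, I would merely indicate the required modifications rather than repeat the computation.
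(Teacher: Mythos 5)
Your proposal follows the paper's argument step for step: you introduce the twisted $I$-function $I_{tw}=\sum_\Theta M_\Theta J_\Theta$, reduce to $\Delta^{-1}I_{tw}(\t,-z)\in\mathcal{L}_{un}$ via Proposition~\ref{symplectic}, rewrite the left-hand side as $\exp(-G_{1/3}(z\nabla_{-H/3}-zD,z))J_{un}$, run the same induction on the $s_k$-degree using the cone property $zT_f\mathcal{L}_{un}=\mathcal{L}_{un}\cap T_f\mathcal{L}_{un}$, and then specialize $s_k$, take $\lambda\to 0$, compute $M_\Theta$ explicitly, invoke Givental's mirror theorem for $\P^1$, and rescale by $\tfrac13 e^t$. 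This matches the paper's proof essentially verbatim, including the treatment of the quadric case by indicating the analogous modifications.
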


\subsection{Relating the LG and GW $I$-functions}

Equipped with an explicit expression for the hybrid $I$-functions and having proved that they lie on the respective Lagrangian cones $\mathcal{L}_{tw}$, we are finally ready to prove the main theorem of the paper:

\begin{proof}[Proof of Theorem \ref{LGCY}]
We have shown that $I_{hyb}(t, -z)$ lies on the Lagrangian cone $\mathcal{L}_{hyb}$.  The property (\ref{tangent}) implies that the $J$-function is characterized by the fact that $J_{hyb}(t, -z) \in \mathcal{L}_{hyb}$ together with the first two terms of its expansion in powers of $z$:
\[J_{hyb}(t, -z) = -1^{(1)}z + t + O(z^{-1}).\]
Using the formula for $I_{hyb}(t, z)$, it is not difficult to show that it can be expressed as
\[I_{hyb}(t, z) = \omega_1^{hyb}(t) \cdot 1^{(1)} \cdot z + \omega_2^{hyb}(t)  + O(z^{-1})\]
for $\C$-valued functions $\omega_1^{hyb}$ and $\omega_2^{hyb}$.  These can be calculated explicitly, but the computation is tedious and not strictly necessary to prove the LG/CY correspondence, so we relegate it to the Appendix.

Having obtained such functions $\omega_i^{hyb}$, we have
\[\frac{I_{hyb}(t,-z)}{\omega_1^{hyb}(t)} = -1^{(1)} \cdot z + \frac{\omega_2^{hyb}(t)}{\omega_1^{hyb}(t)} + O(z^{-1}),\]
and this still lies on $\mathcal{L}_{hyb}$ because it is a cone.  So by the uniqueness property of $J_{hyb}$, we have
\begin{equation}
\label{cov}
\frac{I_{hyb}(t, -z)}{\omega_1^{hyb}(t)} = J_{hyb}(t', -z), \text{ where } t' = \frac{\omega_2^{hyb}(t)}{\omega_1^{hyb}(t)}.
\end{equation}
This is the change of variables relating the hybrid $I$-function and $J$-function.

As for the symplectic transformation matching $I_{hyb}$ with the analytic continuation of $I_{GW}$, the comments in the Introduction show that it is sufficient to prove that the hybrid $I$-function assembles the solutions to the Picard-Fuchs equation 
\begin{equation}
\label{PF}
\left[ \left(\psi \frac{\d}{\d \psi}\right)^4 - \psi^{-1} \left(\psi\frac{\d}{\d \psi} -\frac{1}{3}\right)^2\left(\psi\frac{\d}{\d \psi} -\frac{2}{3}\right)^2 \right] F = 0
\end{equation}
for the cubic singularity, where $\psi = e^{3t}$, or 
\[\left[ \left(\psi \frac{\d}{\d \psi}\right)^4 - \psi^{-1} \left(\psi\frac{\d}{\d \psi} -\frac{1}{2}\right)^4 \right] F= 0\]
for the quadric singularity, where $\psi = e^{2t}$.  As usual, we prove only the first of these statements.

Split $I_{hyb}$ into two parts corresponding to the two narrow summands of $H_{tw}$, changing the variable of summation in each:
\begin{align*}
I_{hyb}(t,z) = &\sum_{d \geq 0} \frac{ze^{(3d + 1 + \frac{H^{(1)}}{z})t}}{3^{6d}} \frac{\displaystyle\prod_{\substack{1 \leq b \leq 3d\\ b\equiv 1 \mod 3}} (H^{(1)}+bz)^4}{\displaystyle \prod_{\substack{1 \leq b \leq 3d\\ b \equiv 0,2 \mod 3}} (H^{(1)}+bz)^2}\\
+&\sum_{d \geq 0} \frac{ze^{(3d + 2 + \frac{H^{(2)}}{z})t}}{3^{6d}} \frac{\displaystyle\prod_{\substack{1 \leq b \leq 3d+1\\ b\equiv 2 \mod 3}} (H^{(2)}+bz)^4}{\displaystyle \prod_{\substack{1 \leq b \leq 3d+1\\ b \equiv 0,1 \mod 3}} (H^{(2)}+bz)^2}.
\end{align*}
The claim is that, when we set $\psi = e^{3t}$, each of these summands separately satisfies (\ref{PF}) as a cohomology-valued function.  For the first summand, let $\Psi_d$ be the contribution from $d$:
\[\Psi_d = z\frac{\psi^{d + \frac{1}{3} + \frac{H^{(1)}}{3z}}}{3^{6d}} \frac{\displaystyle\prod_{\substack{1 \leq b \leq 3d\\ b\equiv 1 \mod 3}} (H^{(1)}+bz)^4}{\displaystyle \prod_{\substack{1 \leq b \leq 3d\\ b \equiv 0,2 \mod 3}} (H^{(1)}+bz)^2}.\]
By computing the ratio $\Psi_d/\Psi_{d-1}$, it is easy to check that
\[\left(\sfrac{H^{(d+1)}}{3z} + d - \frac{2}{3}\right)^4\Psi_{d-1} = 3^6\psi^{-1} \left(\sfrac{H^{(d+1)}}{3z} + d\right)^2\left(\sfrac{H^{(d+1)}}{3z} + d - \frac{1}{3}\right)^2 \Psi_d.\]
But the operator $\psi \frac{\d}{\d \psi}$ acts on $\Psi_d$ by multiplication by $\left(\frac{H^{(d+1)}}{3z} + d + \frac{1}{3}\right)$, so the above can be expressed as
\[\left(\psi \frac{\d}{\d \psi}\right)^4 \Psi_{d-1} = 3^6\psi^{-1}\left(\psi \frac{\d}{\d \psi} - \frac{1}{3}\right)^2 \left(\psi \frac{\d}{\d \psi} - \frac{2}{3}\right)^2 \Psi_d.\]
It follows that if one applies the Picard-Fuchs operator in (\ref{PF}) to the first summand of $I_{hyb}(t,z)$, all but possibly the $\Psi_0$ summand will be annihilated.  In fact, though, it is easy to see using the fact that $H^2=0$ that $\Psi_0$ is also killed.  Thus, the Picard-Fuchs equation holds for this summand, and an analogous argument proves the same claim for the second summand.
\end{proof}

\section*{Appendix: Explicit mirror map}

In order to explicitly compute the change of variables (\ref{cov}), it is necessary to find the coefficients of $z^1$ and $z^0$ in $I_{hyb}(t,z)$.  Let us do this first in the cubic case.

Using the identity
\[z^{\ell}\frac{\Gamma(1 + \frac{x}{z} + \ell)}{\Gamma(1 + \frac{x}{z})} = \prod_{k=1}^{\ell} (x + kz),\]
one can rewrite $I_{hyb}$ as
\[z \sum_{\substack{d \geq 0 \\ d \equiv -1 \text{ mod } 3}} e^{(d+1 + \frac{H^{(d+1)}}{z})t} z^{-6\langle\frac{d}{3}\rangle} \frac{\Gamma(\frac{H^{(d+1)}}{3z} + \frac{d}{3} + \frac{1}{3})^6}{\Gamma(\frac{H^{(d+1)}}{3z} + \langle \frac{d}{3} \rangle + \frac{1}{3})^6} \frac{\Gamma(\frac{H^{(d+1)}}{z} + 1)^2}{\Gamma(\frac{H^{(d+1)}}{z} + d + 1)^2}.\]
It is easy to see from here that the only terms that contribute to the coefficient of either $z^1$ or $z^0$ are those with $d \equiv 0 \mod 3$.  In particular, if we expand the function
\[F(\eta) = \sum_{\substack{d \geq 0 \\d \equiv 0 \text{ mod }3}} e^{(d+1+\eta)t} \frac{\Gamma(\frac{\eta}{3} + \frac{d}{3} + \frac{1}{3})^6}{\Gamma(\frac{\eta}{3} + \langle \frac{d}{3}\rangle + \frac{1}{3})^6} \frac{\Gamma(\eta + 1)^2}{\Gamma(\eta + d+1)^2}\]
in powers of $\eta$, then
\[\omega_1^{hyb}(t) = F(0) = \sum_{d \geq 0 } e^{(3d + 1)t} \frac{\Gamma(d + \frac{1}{3})^6}{\Gamma(\frac{1}{3})^6\Gamma(3d + 1)^2}\]
and
\[\omega_2^{hyb}(t) = F'(0) = \sum_{d \geq 0}e^{(3d+1)t} \frac{\Gamma(d + \frac{1}{3})^5}{\Gamma(\frac{1}{3})^6\Gamma(3d+1)^2}\bigg( 2\Gamma'(d + \sfrac{1}{3}) + 2\Gamma(d + \sfrac{1}{3})\psi(1)\]
\[\hspace{4.5cm}- 2\Gamma(d + \sfrac{1}{3})\psi(\frac{1}{3}) - 2\Gamma(d + \sfrac{1}{3})\psi(3d+1) + t \Gamma(d + \sfrac{1}{3})\bigg),\]
where $\psi$ is the digamma function, the logarithmic derivative of $\Gamma$.

The same argument shows that in the quadric case, one has
\[\omega_1^{hyb}(t) = G(0) = \sum_{d \geq 0} e^{(d+1)t} \frac{(2d)!^8 (2d+1)!^4}{4^{8d}d!^8}\]
and
\[\omega_2^{hyb}(t) = G'(0) = \sum_{d \geq 0} e^{(d+1)t}\frac{\Gamma(d + \frac{1}{2})^6}{\Gamma(\frac{1}{2})^8\Gamma(2d+1)^4}\bigg(4\Gamma'(d + \sfrac{1}{2}) + 4\Gamma(d + \sfrac{1}{2})\psi(1)\]
\[\hspace{4.5cm}- 4\Gamma(d + \sfrac{1}{2})\psi(\sfrac{1}{2}) - 4\Gamma(d + \sfrac{1}{2})\psi(2d+1) + t \Gamma(d + \sfrac{1}{2})\bigg),\]
where
\[G(\eta) = \sum_{\substack{d \geq 0\\d \equiv 0 \text{ mod }2}} e^{(d+1 + \eta)t} \frac{\Gamma(\frac{\eta}{2} + \frac{d}{2} + \frac{1}{2})^8}{\Gamma(\frac{\eta}{2} + \frac{1}{2})^8} \frac{\Gamma(\eta + 1)^4}{\Gamma(\eta + d + 1)^4}.\]
\bibliographystyle{abbrv}
\bibliography{biblio}

\end{document}